\newtheorem{theorem}{Theorem}[section]
\newtheorem{lemma}[theorem]{Lemma}
\newtheorem{example}[theorem]{Example}
\newtheorem{proposition}[theorem]{Proposition}
\theoremstyle{definition}
\newtheorem{remark}[theorem]{Remark}
\newtheorem{definition}[theorem]{Definition}
\newcommand{\R}{\mathbb{R}}
\def\Bar{\overline}
\title{Solving Regularized Multifacility Location Problems with Unknown Number of Centers via Difference-of-Convex Optimization}
\author{{ W. Geremew}\footnote{ Department of Computer Information Systems
School of Business
Stockton University, USA (email: Wondi.Geremew@stockton.edu).},
{ V. S. T. Long\footnote{Faculty of Mathematics and Computer
Science, University of Science,
Ho Chi Minh City, Vietnam.}$^,$ \footnote{Vietnam National University, Ho Chi Minh City, Vietnam, (email: vstlong@hcmus.edu.vn).},
{ N. M. Nam}\footnote{Fariborz Maseeh Department of Mathematics and Statistics,
Portland State University, Portland, OR 97207, USA (email: mnn3@pdx.edu). }, { A. Solano-Herrera}\footnote{Fariborz Maseeh Department of Mathematics and Statistics, Portland State University, Portland, OR
97207, USA (email: asolano2@pdx.edu ).}}}
\date{}
\begin{document}
\maketitle

\begin{abstract} In this paper, we develop optimization methods for a new model of multifacility location problems  defined by a Minkowski gauge with Laplace-type regularization terms. The model is analyzed from both theoretical and numerical perspectives. In particular, we establish the existence of optimal solutions and study qualitative properties of global minimizers. By combining Nesterov’s smoothing technique with recent advances in difference-of-convex optimization, following the pioneering work of P. D. Tao and L. T. H. An and others, we propose efficient numerical algorithms for minimizing the objective function of this model. As an application, our approach provides an effective method for determining the number of centers in gauge-based multifacility location and clustering problems. Our results extend and complement recent developments by L. T. H. An in \cite{An2025}.

\end{abstract}

\noindent \textbf{Keywords:} Multifacility location; clustering; Laplace-type regularization;  Minkowski gauge; DC programming;  DCA.

\noindent \textbf{Mathematics Subject Classification} 49J52; 68Q25; 90C26; 90C90; 90C31; 90C35
\section{Introduction}

The multifacility location problem aims to place several facilities in optimal locations to minimize the total distance to a set of demand points. These problems are closely related to clustering, which plays a central role in data analysis, signal processing, and machine learning. Given a finite collection of data points, the goal of a clustering problem is to identify a prescribed number of representative centers that capture the underlying structure of the data. Classical formulations, such as the $k$-means algorithm, typically rely on squared Euclidean distances and lead to nonconvex and nonsmooth optimization models. Their theoretical analysis and numerical solution have been extensively studied in the literature; see, for instance, \cite{Cuong2020, Cuong2023, CTWYOptim, LNSYkcenter, Martini2002, Mordukhovich2010, Nam2018}. 

Let $F \subset \mathbb{R}^d$ be a fixed compact convex set containing the origin in its interior and let $\rho_F\colon \R^d\to \R$ be its associated Minkowski gauge function defined by 
\begin{equation}\label{gauge}
\rho_F(x) = \inf\{\lambda \ge 0 \mid x \in \lambda F\}.    
\end{equation}
In this paper,  we propose a new model of multifacility location and  clustering based on generalized distances defined by Minkowski gauges and Laplace-type regularization terms. Consider a finite number of distinct data points $a_i$ for $i=1, \ldots, n$ in $\mathbb{R}^d$. We consider the optimization problem:
\begin{equation}\label{maxminn}
	\min_{X\in\mathbb{R}^{k\times d}} \Biggl\{ f(X) = \sum_{i=1}^{n} \min_{1 \le \ell \le k} \rho(x_\ell - a_i) + \frac{\lambda n}{2} \sum_{1 \le s < t \le k} \|x_s - x_t\|^2 \Biggr\},
\end{equation}
where  $\lambda>0$ is a regularization parameter and  $x_\ell$ denotes the $\ell$th row of the matrix variable $X$ for $\ell=1,\ldots,k$. Here and throughout the paper we use $\rho$ instead of $\rho_F$ for simplicity. The first term measures
the data fidelity through a generalized distance induced by $\rho$, while the
second term is a quadratic fusion penalty that promotes closeness among cluster
centers.
where $\lambda>0$ is a parameter. In contrast to prior work that uses
squared Euclidean losses for data fitting, our gauge optimization model allows us to solve problems of multifacility location and clustering, while  the regularization term  enhance stability, robustness, and interpretability. By controlling the  parameter $\lambda$, the regularization term encourages nearby centers to contract. Therefore, our new model also  allows us to determine the  effective number of cluster centers in multifacility location. The use of a regularization term in clustering can be found, for example, in \cite{An2025, Bouveyron,Gao, Huang, Mclachlan, Martini2002}.

The main difficulty in solving problem \eqref{maxminn} stems from the fact that its objective function is both nonsmooth and nonconvex. To address this challenge, we employ Nesterov’s smoothing technique (see  \cite{nesterov2018lectures, nesterov2005smooth}) to approximate the generalized distance $\rho(x_\ell-a_i)$, which constitutes the \emph{data fidelity term} of the objective function $f$, by a $C^{1,1}$ function. This smooth approximation also enables us to express $f$ as a difference of convex functions, making it amenable to difference-of-convex (DC) optimization methods. In particular, we apply the difference-of-convex algorithm (DCA) introduced by Tao and Souad \cite{Tao1986} (see also \cite{An2005, LeThi2018, LeThi2024, TA1, TA2} for further developments), together with its boosted variants from \cite{AragonArtacho2022, AragonArtacho2018}. Our work builds on the successful use of DC programming in clustering, facility location, and related nonsmooth models developed in \cite{An2025, An2007, AragonArtacho2022, AragonArtacho2018, Artacho2020, Nam2018, Nam2017}.

The main contributions of this paper include:
\begin{itemize}
    \item {\em A new model of gauge multifacility location and clustering:} We propose a new model of multifacility location and clustering based on generalized distances defined by Minkowski gauges and Laplace-type regularization terms as in \eqref{maxminn}. The model is applicable to multifacility location and, at the same time, extends classical clustering frameworks beyond standard Euclidean distances.
    \item {\em Qualitative analysis:} We establish the existence of solutions and investigate qualitative properties of both global and local minimizers of the proposed problem. In particular, we analyze the compactness of the set of global solutions as well as the stability of the optimal value function and the associated solution mapping under data perturbations.
    \item {\em Advanced DC programming via Nesterov's smoothing techniques:} We establish an exact DC decomposition of the nonsmooth objective function $f$ in \eqref{maxminn} that is well suited for advanced optimization techniques in DC programming, including the DCA and its boosted variants.
    \item {\em Simultaneous clustering and model selection:} We develop the \textit{LDCA-K} algorithm, which incorporates an adaptive cluster deletion mechanism. This approach allows the model to identify the optimal number of clusters during the optimization process. Our results extend and complement recent developments by L. T. H. An in \cite{An2025}.
\end{itemize}

The paper is structured as follows. Section \ref{section2} provides the qualitative analysis including existence results. Section \ref{section3} discusses stability properties. Sections \ref{section4} and \ref{section5} are devoted to the DC programming framework and the LDCA-K algorithm. Convergence analysis and numerical results are presented in Sections \ref{section6}, %\ref{section7}
and~\ref{sec:section8}.

 Our analysis is conducted entirely in finite-dimensional Euclidean spaces and relies on tools from convex analysis, variational analysis, and matrix spectral theory; see \cite{boyd2004convex,mordukhovich2023easy,rockafellar1970convex,rockafellar2009variational} and the references therein. Throughout the paper, we consider the Euclidean space $\mathbb{R}^d$  equipped with the inner product $\langle \cdot, \cdot\rangle$ and the Euclidean norm $\|\cdot\|$. The closed and open Euclidean balls with center $a \in \mathbb{R}^n$ and radius $R > 0$ are respectively defined by $$B[a; R] = \{x \in \mathbb{R}^n \mid \|x-a\| \le R\}\quad\text{and}\quad B(a; R) = \{x \in \mathbb{R}^n \mid \|x-a\| < R\},$$ respectively. We also consider the space $\mathbb{R}^{k\times d}$ of all $k\times d$ real matrices equipped with the {\em Frobenius norm}.

\begin{comment}
\begin{figure}[H]
\centering
\subfigure[Laplacian model with three prototypes]{%
\includegraphics[width=0.45\textwidth]{}}
\hfill
\subfigure[Laplacian model with four prototypes]{%
\includegraphics[width=0.45\textwidth]{}}
\caption{Geometric interpretation of prototype interactions under Laplacian fidelity and Euclidean fusion penalties.}
\label{fig:ldca_geometry}
\end{figure}
\end{comment}

\section{The Existence of Solutions}
\label{section2}
We begin this section by recalling some basic notions and definitions from convex analysis that will be employed throughout the paper.

\begin{definition} \label{def_v1} 
Let $\Omega$ be a nonempty subset of $\mathbb{R}^d$. 
\begin{enumerate}
    \item[{\rm (a)}] The \textit{distance function} associated with $\Omega$ is given by
    \begin{equation*}
        d(x; \Omega) = \inf \left\{ \|x - w\| \mid w \in \Omega \right\}, \quad x \in \mathbb{R}^d.
    \end{equation*}
    \item[{\rm (b)}] For any $x \in \mathbb{R}^d$, the \textit{Euclidean projection} of $x$ onto $\Omega$ is given as the set
    \begin{equation*}
        P(x; \Omega) = \{ \omega \in \Omega \mid \|x - \omega\| = d(x; \Omega) \}.
    \end{equation*}
\end{enumerate}
\end{definition}

The \textit{polar set} $F^\circ$ of a subset $F \subset \mathbb{R}^d$ is defined by
\begin{equation*}
    F^\circ = \{ v \in \mathbb{R}^d \mid \langle v, x \rangle \leq 1 \text{ for all } x \in F \}.
\end{equation*}
To simplify our estimates, we denote the maximum norms associated with the set $F$ and its polar $F^\circ$ as:
\begin{equation*}
    \|F\| = \sup \{ \|x\| \mid x \in F \} \quad \text{and} \quad \|F^\circ\| = \sup \{ \|v\| \mid v \in F^\circ \}.
\end{equation*}

We will use the following properties concerning the Minkowski function in our subsequent analysis. These results can be found in \cite[Proposition 2.1]{colombo2004subgradient},  \cite[Lemma 4.1]{longoptimletter}, and \cite[Theorem 6.14 and Proposition 6.18]{mordukhovich2023easy}.

\begin{lemma}\label{lemmarho0}
Let $\rho$ be the Minkowski function defined in \eqref{gauge}. Then the following properties hold:
	\begin{enumerate}
	\item [{\rm (a)}] $\rho(\alpha x) =\alpha\rho(x)$ for all $\alpha \geq0$,  $x\in \R^d$.
	\item [{\rm (b)}] $\rho(x_1+x_2)\leq \rho(x_1)+\rho(x_2)$ for all $x_1,x_2\in \R^d$.
	\item [{\rm (c)}] $\rho(x)=0\;\text{ if and only if }\; x=0.$
	\item [{\rm (d)}]  $\rho$ is $\|F^\circ\|$-Lipschitz continuous on $\R^d$.
	\item [{\rm (e)}] $x\in {\rm bd}(F)$ if and only if $\rho(x)=1,$ where ${\rm bd}(F)$ denotes the boundary of $F$.
	\item [{\rm (f)}] $\rho(x)=\max_{v\in F^\circ}\langle v,x\rangle$ for all $x\in \R^d.$
	\item [{\rm (g)}] $\frac{\rho(x)}{\|F^\circ\|}\leq \|x\|\leq \|F\|\rho(x)$ for all $x\in\R^d.$
\item [{\rm (h)}] $\rho (x)\leq \|F\|\|F^{\circ}\|\rho(-x)$ for all $x\in \R^d$.
	\end{enumerate}
\end{lemma}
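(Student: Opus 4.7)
The plan is to order the proofs so that the easier, definition-driven items come first and the polar representation (f) serves as a hub from which the remaining items follow cleanly. Property (a) is immediate: for $\alpha>0$ the substitution $\mu=\lambda/\alpha$ in the defining infimum gives $\rho(\alpha x)=\alpha\rho(x)$, while $\alpha=0$ is trivial since $0\in F$. For (b), given $\lambda_j>\rho(x_j)$ with $x_j/\lambda_j\in F$, convexity of $F$ applied to the convex combination with weights $\lambda_j/(\lambda_1+\lambda_2)$ places $(x_1+x_2)/(\lambda_1+\lambda_2)$ in $F$; sending $\lambda_j\downarrow \rho(x_j)$ yields subadditivity. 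For (c), the hypothesis $0\in\mathrm{int}(F)$ gives some $r>0$ with $B[0;r]\subset F$, so $\rho(x)\le \|x\|/r<\infty$ and $\rho(0)=0$; conversely, if $\rho(x)=0$ one can pick $\lambda_n\downarrow 0$ with $x/\lambda_n\in F$, and boundedness of $F$ forces $\|x\|\le \lambda_n\|F\|\to 0$.

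Next I would establish (f), the main technical step. Because $F$ is compact convex with $0$ in its interior, $F^\circ$ enjoys the same properties, and the bipolar theorem gives $F^{\circ\circ}=F$. The gauge can then be rewritten via the separation identity $\rho(x)=\inf\{\lambda>0\mid \langle v,x\rangle\le\lambda\text{ for all }v\in F^\circ\}=\sup_{v\in F^\circ}\langle v,x\rangle$, with compactness of $F^\circ$ ensuring the supremum is attained. With (f) in hand, (d) is immediate: $|\rho(x)-\rho(y)|\le \sup_{v\in F^\circ}|\langle v,x-y\rangle|\le \|F^\circ\|\,\|x-y\|$.

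The remaining items follow by chaining. For (g), the upper bound $\|x\|\le \|F\|\rho(x)$ comes from $x/\rho(x)\in F$ when $\rho(x)>0$ (closedness of $F$ makes the defining infimum attained), and the lower bound $\rho(x)/\|F^\circ\|\le \|x\|$ is Cauchy--Schwarz applied to (f). Property (e) follows by combining closedness of $F$ with positive homogeneity: $\rho(x)=1$ means $x\in F$ while $x/\lambda\notin F$ for any $\lambda<1$, i.e., $x\in\mathrm{bd}(F)$. Finally, (h) chains the two halves of (g): $\rho(x)\le \|F^\circ\|\,\|x\|=\|F^\circ\|\,\|-x\|\le \|F\|\,\|F^\circ\|\,\rho(-x)$. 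The main obstacle is (f), which invokes the bipolar duality between gauges and support functions and is the only place where convex-separation machinery is genuinely needed; every other item either comes directly from the definition or is a brief corollary of (f) together with the compactness and interior-point hypotheses on $F$.
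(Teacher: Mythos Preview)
Your proof outline is correct and self-contained. The paper, however, does not prove this lemma at all: it simply states the result and refers the reader to \cite[Proposition~2.1]{colombo2004subgradient}, \cite[Lemma~4.1]{longoptimletter}, and \cite[Theorem~6.14 and Proposition~6.18]{mordukhovich2023easy}. So there is no proof in the paper to compare against; your argument supplies what the paper delegates to the literature.

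Your organization---deriving (f) from the bipolar theorem and then reading off (d), (g), and (h) as corollaries---is the standard and efficient route, and matches how these facts are typically packaged in the cited sources. One small point: in (e) you only sketch the direction $\rho(x)=1\Rightarrow x\in\mathrm{bd}(F)$; for the converse you would want to note that $x\in F$ (closedness) gives $\rho(x)\le 1$, while $x\notin\mathrm{int}(F)=\{y:\rho(y)<1\}$ gives $\rho(x)\ge 1$. This is routine, but worth making explicit if you intend the proposal to stand alone.
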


\subsection{Global Optimal Solutions}
In this subsection, we establish the existence and compactness of the global optimal solution set for problem~\eqref{maxminn}, followed by illustrative examples.

\begin{theorem}\label{thm1}
The global optimal solution set $S$ of problem~\eqref{maxminn} is nonempty and compact.
\end{theorem}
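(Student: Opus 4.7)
The plan is to establish existence and compactness by verifying that the objective $f$ is continuous and coercive on $\mathbb{R}^{k\times d}$, and then invoking the Weierstrass theorem. Continuity is immediate: by Lemma \ref{lemmarho0}(d), $\rho$ is $\|F^\circ\|$-Lipschitz, so each map $X\mapsto \min_{1\le\ell\le k}\rho(x_\ell-a_i)$ is continuous as the minimum of finitely many continuous functions, and the fusion term is polynomial. Hence $f$ is continuous, and also nonnegative, so $\inf f\ge 0$.

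The main work is coercivity: I want to show $f(X)\to\infty$ whenever $\|X\|_F\to\infty$. The key algebraic identity I would use is
\begin{equation*}
\|X\|_F^2 \;=\; \sum_{\ell=1}^k \|x_\ell\|^2 \;=\; k\|\bar x\|^2 + \sum_{\ell=1}^k \|x_\ell-\bar x\|^2,
\qquad \bar x := \tfrac{1}{k}\sum_{\ell=1}^k x_\ell,
\end{equation*}
together with the standard identity $\sum_{1\le s<t\le k}\|x_s-x_t\|^2 = k\sum_{\ell=1}^k\|x_\ell-\bar x\|^2$, which rewrites the fusion penalty in terms of the spread about the centroid. Given any sequence $X^{(m)}$ with $\|X^{(m)}\|_F\to\infty$, I would pass to a subsequence and split into two cases.

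In the first case, $\sum_\ell \|x_\ell^{(m)}-\bar x^{(m)}\|^2 \to \infty$; then the fusion penalty itself drives $f(X^{(m)})\to\infty$. In the second case, the spread $\sum_\ell \|x_\ell^{(m)}-\bar x^{(m)}\|^2$ stays bounded by some constant $C$; then the identity above forces $\|\bar x^{(m)}\|\to\infty$, and consequently, for each $\ell$,
\begin{equation*}
\|x_\ell^{(m)}\| \;\ge\; \|\bar x^{(m)}\| - \|x_\ell^{(m)}-\bar x^{(m)}\| \;\ge\; \|\bar x^{(m)}\| - \sqrt{C} \;\longrightarrow\; \infty.
\end{equation*}
Using the lower bound $\rho(y)\ge \|y\|/\|F\|$ from Lemma \ref{lemmarho0}(g) and setting $R:=\max_i\|a_i\|$, I obtain, for every $i$,
\begin{equation*}
\min_{1\le\ell\le k}\rho(x_\ell^{(m)}-a_i) \;\ge\; \frac{1}{\|F\|}\min_{1\le\ell\le k}\bigl(\|x_\ell^{(m)}\|-R\bigr) \;\ge\; \frac{\|\bar x^{(m)}\|-\sqrt{C}-R}{\|F\|},
\end{equation*}
so the data-fidelity term drives $f(X^{(m)})\to\infty$. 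This establishes coercivity.

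Once coercivity is in hand, the conclusion is a textbook argument: choose any $X_0$, the sublevel set $L:=\{X:f(X)\le f(X_0)\}$ is nonempty, closed (by continuity) and bounded (by coercivity), hence compact, so $f$ attains its minimum on $L$, which is then a minimum on all of $\mathbb{R}^{k\times d}$, giving $S\neq\emptyset$. Finally, $S=\{X:f(X)=\inf f\}$ is closed by continuity and contained in some bounded sublevel set by coercivity, hence compact. I expect the main obstacle to be the coercivity argument above; specifically, ruling out the scenario where a single center drifts to infinity while the others remain bounded and absorb all the data points — this is precisely where the quadratic fusion penalty is essential, as captured by the centroid-spread decomposition.
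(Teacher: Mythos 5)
Your proposal is correct and follows essentially the same route as the paper: continuity of $f$, coercivity via a two-case dichotomy (either the centers spread apart, so the quadratic fusion penalty blows up, or they all translate to infinity together, so the data-fidelity term blows up via Lemma \ref{lemmarho0}(g)), and then the standard Weierstrass/sublevel-set argument. The only difference is organizational: the paper picks an index $t_0$ with $\|x_{t_0,m}\|\to\infty$ and asks whether some pairwise distance $\|x_{t_0,m}-x_{s,m}\|$ is unbounded, whereas you run the same dichotomy through the identity $\sum_{1\le s<t\le k}\|x_s-x_t\|^2=k\sum_{\ell=1}^k\|x_\ell-\bar x\|^2$, which is a slightly cleaner bookkeeping device (and is the same Laplacian identity the paper proves later when computing the Hessian of $g_\mu$).
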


\begin{proof}
By Lemma~\ref{lemmarho0}(d), the Minkowski gauge function $\rho$ is Lipschitz continuous on $\mathbb{R}^d$.  Thus, $f$ is continuous on $\mathbb{R}^{k\times d}$. We first show that $f$ is coercive, i.e.,
\begin{equation}\label{eq1}
\|X\|_F \to \infty \quad \Longrightarrow \quad f(X) \to \infty,
\end{equation}
where $\|\cdot\|_F$ denotes the Frobenius norm on $\mathbb{R}^{k\times d}$.
Let $\{X_m\}_m$ be a sequence in $\mathbb{R}^{k\times d}$ such that 
$$\|X_m\|_F \to \infty\text{\; as }\;m \to \infty,$$ where $X_m = (x_{1,m}, \dots, x_{k,m}) \in \mathbb{R}^{k \times d}$. Then there exists an index $t_0 \in \{1, \dots, k\}$ such that $$\|x_{t_0,m}\| \to \infty\;\text{ as }\;m \to \infty.$$ 
We now distinguish two cases:

\medskip
\noindent\emph{Case A: There exists $s_0 \neq t_0$ such that the sequence $\{\|x_{t_0,m} - x_{s_0,m}\|\}_m$ is unbounded.} 
By passing to a subsequence if necessary, we may assume that $$\|x_{t_0,m} - x_{s_0,m}\| \to \infty\;\text{ as }\;m \to \infty.$$ 
In this case, we consider the fusion term of $f$. Let $(s', t')$ be a pair of indices such that $\{s', t'\} = \{s_0, t_0\}$ with $s' < t'$. Then we have
\[
\frac{\lambda n}{2} \sum_{1 \le s < t \le k} \|x_{s,m} - x_{t,m}\|^2 
\ge \frac{\lambda n}{2} \|x_{s',m} - x_{t',m}\|^2 
= \frac{\lambda n}{2} \|x_{t_0,m} - x_{s_0,m}\|^2 \to \infty,
\]
which implies that $$f(X_m) \to \infty\;\text{ as }\;m \to \infty.$$

\medskip
\noindent\emph{Case B: For every $\ell = 1, \dots, k$, the sequence $\{\|x_{t_0,m} - x_{\ell,m}\|\}_m$ is bounded.} 
Since $\|x_{t_0,m}\| \to \infty$, it follows from the triangle inequality that for all $\ell = 1, \dots, k$,
\[
\|x_{\ell,m}\| \ge \|x_{t_0,m}\| - \|x_{t_0,m} - x_{\ell,m}\| \to \infty \;\text{ as }\; m \to \infty.
\]
Fix any index $i_0 \in \{1, \dots, n\}$. For every $\ell = 1, \dots, k$, we have$$\|x_{\ell,m} - a_{i_0}\| \ge \|x_{\ell,m}\| - \|a_{i_0}\| \to \infty \quad \text{as } m \to \infty,$$because $\|a_{i_0}\|$ is a fixed constant. 
Applying the lower bound property of the gauge function from Lemma~\ref{lemmarho0}(g), for every $\ell = 1, \dots, k$, we have
\[
\rho(x_{\ell,m} - a_{i_0}) \ge \frac{1}{\|F\|} \|x_{\ell,m} - a_{i_0}\| \to \infty \;\text{ as }\; m \to \infty.
\]
Hence, the minimum over $\ell$ also diverges, i.e.,
\[
\min_{1 \le \ell \le k} \rho(x_{\ell,m} - a_{i_0}) \to \infty \;\text{ as }\; m \to \infty.
\]
Consequently, the data-fidelity term satisfies
\[
\sum_{i=1}^n \min_{1 \le \ell \le k} \rho(x_{\ell,m} - a_i) \ge \min_{1 \le \ell \le k} \rho(x_{\ell,m} - a_{i_0}) \to \infty \;\text{ as }\; m \to \infty.
\]
As the fusion term is non-negative, we again conclude that $f(X_m) \to \infty$.

\medskip
Combining Cases A and B, we conclude that $f$ is coercive. Since $f$ is continuous and coercive on the finite-dimensional space $\mathbb{R}^{k\times d}$, the set of global optimal solutions $S$ is nonempty and compact (see, e.g., \cite[Theorem 1.9]{rockafellar2009variational}), which completes the proof.
\end{proof}

The following example shows that in the case $\lambda=0$, one cannot use coercivity
arguments to prove the existence and compactness of the global optimal
solution set to problem \eqref{maxminn}. 

\begin{example}{\rm 
Consider the simplest one-dimensional setting $d=1$ and let $F = [-1,1]\subset\mathbb{R}.$
Then the associated Minkowski gauge is the absolute value,
\[
\rho(x)=|x|,\qquad x\in\mathbb{R}.
\]
Take two data points $a_1=0,\; a_2=1,$
and $k=2$ centers. If we remove the penalty term (i.e.\ set $\lambda=0$),
problem~\eqref{maxminn} reduces to
\[
\min_{(x_1,x_2)\in\mathbb{R}^2}\Big\{f(x_1,x_2)
=\sum_{i=1}^2 \min_{\ell=1,2} |x_\ell-a_i|\Big\}.\]
Take the sequence
\[
X_m = (x_{1,m},x_{2,m}) = (0,m),\qquad m\in\mathbb{N}.
\]
Then the Frobenius norm of $X_m$ satisfies
\[
\|X_m\|_F = \sqrt{(0)^2 + m^2} = m \;\to\; \infty
\;\text{ as }\; m\to\infty.
\]
However, for each $m\geq 2$ we have
\[
f(X_m)
=
\min\big(|0-0|,|m-0|\big)
+
\min\big(|0-1|,|m-1|\big)
=
0 + 1
= 1,
\]
since the first center $x_{1,m}=0$ fits $a_1=0$ perfectly and remains the closest
center to $a_2=1$ as soon as $m$ is large.
Thus we obtain a sequence $\{X_m\}$ with
\[
\|X_m\|_F\to\infty
\quad\text{but}\quad
f(X_m)=1\ \text{ for all } m\geq 2,
\]
which shows that $f$ is \emph{not} coercive when $\lambda=0$.}
\end{example}

The next example provides a simple one-dimensional illustration of Theorem~\ref{thm1}. 

\begin{example}\label{ex:lambda1-k3}
{\rm Let $\{a_1, a_2\} = \{0, 1\}$, $k=3$ centers, $F = [-1, 1]$, and $\lambda = 1$. Then
problem~\eqref{maxminn} takes the form
\[
\min_{(x_1,x_2,x_3)\in\R^3}
\Big\{
f(x_1,x_2,x_3)
=\sum_{i=1}^2\min_{1\le\ell\le 3}|x_\ell-a_i|
+\frac{\lambda n}{2}\sum_{1\le s<t\le 3}|x_s-x_t|^2
\Big\},
\]
that is,
\[
\min_{(x_1,x_2,x_3)\in\R^3}
\Big\{
\min(|x_1|,|x_2|,|x_3|)
+\min(|x_1-1|,|x_2-1|,|x_3-1|)
+\sum_{1\le s<t\le 3}|x_s-x_t|^2
\Big\}.
\]

\medskip\noindent
\emph{Step 1: Reduction to the interval $[0,1]$ and ordered centers.}
Since the objective is symmetric in $(x_1,x_2,x_3)$, we may assume without loss
of generality that
\[
x_1\le x_2\le x_3.
\]
If $x_1<0$, we can move $x_1$ to $0$, then $\min(|x_1|,|x_2|,|x_3|)$ and all
pairwise distances involving $x_1$ decrease (or stay the same), while
$\min(|x_1-1|,|x_2-1|,|x_3-1|)$ does not increase. A similar argument shows
that if $x_3>1$ we can move $x_3$ to $1$ without increasing the objective.
Hence any global minimizer must satisfy
\begin{equation}\label{eq2}
0\le x_1\le x_2\le x_3\le 1.
\end{equation}

\medskip\noindent
\emph{Step 2: Explicit form of the objective on $[0,1]$.}
For $0\le x_1\le x_2\le x_3\le 1$, we have
\[
\min(|x_1|,|x_2|,|x_3|)=x_1\quad\text{and}\quad
\min(|x_1-1|,|x_2-1|,|x_3-1|)=1-x_3,
\]
and thus
\[
\sum_{i=1}^2\min_{1\le\ell\le 3}|x_\ell-a_i|
=
x_1+(1-x_3).
\]
The penalty term becomes
\[
\sum_{1\le s<t\le 3}|x_s-x_t|^2
=
(x_2-x_1)^2+(x_3-x_1)^2+(x_3-x_2)^2.
\]
Therefore, for any $(x_1,x_2,x_3)$ satisfying \eqref{eq2}, we have
\begin{equation}\label{eq:ex-k3-f}
f(x_1,x_2,x_3)
=
x_1+(1-x_3)
+(x_2-x_1)^2+(x_3-x_1)^2+(x_3-x_2)^2.
\end{equation}

\medskip\noindent
\emph{Step 3: Minimization in terms of the gaps between centers.}
Letting
\[
u=x_2-x_1\ge0\quad\text{and}\quad v=x_3-x_2\ge0,
\]
we obtain
\[
x_2=x_1+u\quad\text{and}\quad x_3=x_2+v=x_1+u+v.
\]
Then the constraint $x_3\le 1$ becomes
\begin{equation*}\label{eq:ex-k3-ab-dom}
x_1+u+v\le1,\qquad x_1\ge0,\quad u\ge0,\quad v\ge0.
\end{equation*}
Substituting into \eqref{eq:ex-k3-f} yields
\[
\begin{aligned}
f(x_1,x_2,x_3)
&=x_1+\bigl(1-(x_1+u+v)\bigr)
+u^2+(u+v)^2+v^2\\[0.2em]
&=1-u-v +\bigl(u^2+(u+v)^2+v^2\bigr)\\[0.2em]
&=1-u-v +2(u^2+uv+v^2).
\end{aligned}
\]
Thus $f$ depends only on $(u,v)$, and we can write
\[
g(u,v)=
1-u-v+2(u^2+uv+v^2),\qquad u,v\ge0,\ u+v\le1.
\]

\medskip\noindent
\emph{Step 4: Minimizing $g(u,v)$.}
The function $g$ is strictly convex quadratic in $(u,v)$, since its Hessian matrix
$$H_g(u,v) = \begin{pmatrix} 4 & 2 \\ 2 & 4 \end{pmatrix}$$
is constant and positive definite. Hence $g$ has a unique global minimizer in
$\R^2$, and any critical point in the interior of the feasible region
$\{(u,v)\mid u,v\ge0,\ u+v\le1\}$ is the global minimizer.
Solving $\nabla g(u,v)=0$ gives
\[
\begin{cases}
\displaystyle \frac{\partial g}{\partial u}
=-1+4u+2v=0\\[0.4em]
\displaystyle \frac{\partial g}{\partial v}
=-1+2u+4v=0,
\end{cases}
\]
whose unique solution is
\[
u^\star=v^\star=\frac{1}{6}.
\]
Since $u^\star+v^\star=\frac{1}{3}\le1$, this point lies in the interior of
the feasible region. Substituting into $g$ yields
\[
g(u^\star,v^\star)
=
1-\frac{1}{6}-\frac{1}{6}
+2\Bigl(\Bigl(\frac{1}{6}\Bigr)^2
+\frac{1}{6}\cdot\frac{1}{6}
+\Bigl(\frac{1}{6}\Bigr)^2\Bigr)
=
\frac{5}{6}.
\]

\medskip\noindent
\emph{Step 5: Description of the optimal centers.}
We have shown that any global minimizer must satisfy
\[
u=x_2-x_1=\frac{1}{6},\qquad
v=x_3-x_2=\frac{1}{6},
\]
that is,
\[
x_2=x_1+\frac{1}{6},\qquad
x_3=x_1+\frac{1}{3}.
\]
The constraints $0\le x_1\le x_2\le x_3\le1$ become
\[
0\le x_1\le x_1+\frac{1}{6}\le x_1+\frac{1}{3}\le1,
\]
which implies
\[
0\le x_1\le\frac{2}{3}.
\]
Thus the set of global minimizers is the line segment
\[
S
=
\Bigl\{
(x_1,x_2,x_3)\in\R^3\ \Big|\ 
0\le x_1\le\frac{2}{3},\ \ 
x_2=x_1+\frac{1}{6},\ \ 
x_3=x_1+\frac{1}{3}
\Bigr\},
\]
and the minimal value is $f=\frac{5}{6}$. The solution set $S$ is
clearly compact, in agreement with Theorem~\ref{thm1}.

\medskip
For comparison, when $\lambda=0$ (and $k^{\star} = 3$) the optimal value is $0$, attained
for example by $(x_1,x_2,x_3)=(0,0,1)$ or $(0,1,1)$, since each demand point
can be matched exactly by at least one center. With $\lambda=1$, the penalty
term enforces repulsion among the centers and yields a nontrivial trade-off
between approximation quality and separation: in an optimal solution, the
three centers are equally spaced with gap $\frac{1}{6}$ between consecutive
centers and total span $\frac{1}{3}$.}
\end{example}

Following \cite{Cuong2020,LNSYkcenter}, we associate with each
$X=(x_1,\ldots,x_k)\in\R^{k\times d}$ a family of subsets
$A^1,\ldots,A^k$ of the demand set $A$ as follows.  
Set $A^0=\emptyset$ and for $\ell=1,\ldots,k$, define
\[
A^\ell
=
\left\{a_i\in A\setminus\Big(\bigcup_{q=0}^{\ell-1}A^q\Big)\;\Big|\;
\rho(x_\ell-a_i)
=
\min_{r=1,\ldots,k}\rho(x_r-a_i)
\right\}.
\]
We say that the family $\{A^1,\ldots,A^k\}$ constructed in this way is the
\emph{natural clustering} associated with $X$.

\begin{definition}\label{def1}
For $\ell=1,\dots,k$, the component $x_\ell$ of $X=(x_1,\ldots,x_k)\in \R^{k\times d}$ is said to be \textit{attractive} with respect to $A$ if the following set
\begin{equation}\label{def:A_l}
A_\ell(X)=\Big\{a_i\in A \;\Big|\;
\rho(x_\ell-a_i)=\min_{1\le r\le k}\rho(x_r-a_i)
\Big\}    
\end{equation}
is nonempty.
\end{definition}
Clearly, we have
$$A^\ell=A_\ell(X)\Big\backslash\left(\bigcup_{q=1}^{\ell-1}A^q\right).$$

The next proposition clarifies the role of duplicated centers in problem~\eqref{maxminn}.

\begin{proposition}\label{prop:natural-weaker}
Consider problem~\eqref{maxminn}. Let 
$X=(x_1,\dots,x_k)\in \R^{k\times d}$
and let $\widehat x_1,\dots,\widehat x_q$ be the distinct values among 
$\{x_1,\dots,x_k\}$. 
Then the following properties hold:
\begin{itemize}
\item[(i)] For every $i\in\{1,\dots,n\}$, there exists at least one 
$s\in\{1,\dots,q\}$ such that $a_i\in A_s(X)$, where
\[
 C_s=\{\ell\in\{1,\dots,k\}\mid x_\ell=\widehat x_s\}\quad\text{and}\quad
A_s(X)=\bigcup_{\ell\in C_s} A_\ell(X).
\]
In other words,
\[
A=\bigcup_{s=1}^q A_s(X).
\]

\item[(ii)] If we discard empty sets among $\{A_s(X)\}_{s=1}^q$, the remaining
family $\{A_s(X)\}_{s\in J}$ with
\[
J=\{s\in\{1,\dots,q\}\mid A_s(X)\neq\emptyset\}
\]
forms a natural clustering of $A$ induced by the distinct center locations
$\{\widehat x_s\}_{s\in J}$.
In particular, the number of \emph{effective} clusters satisfies
$|J|\le\min\{k,n\}$. 
\end{itemize}
\end{proposition}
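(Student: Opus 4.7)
The plan is to exploit the observation that the attractive set $A_\ell(X)$ depends on $x_\ell$ only through its value, so that $A_\ell(X) = A_{\ell'}(X)$ whenever $x_\ell = x_{\ell'}$, and hence $A_s(X)$ is the attractive set associated with the single distinct center $\widehat{x}_s$ in the reduced clustering problem with center matrix $\widehat{X} = (\widehat{x}_1, \ldots, \widehat{x}_q)$.

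For part (i), I would fix $a_i \in A$ and use finiteness of the index set to select $r^* \in \{1, \ldots, k\}$ with $\rho(x_{r^*} - a_i) = \min_{1 \le r \le k} \rho(x_r - a_i)$. By \eqref{def:A_l}, $a_i \in A_{r^*}(X)$, and since $\{C_1, \ldots, C_q\}$ partitions $\{1, \ldots, k\}$, there is a unique $s$ with $r^* \in C_s$, giving $a_i \in A_{r^*}(X) \subseteq A_s(X)$. Thus $A \subseteq \bigcup_{s=1}^q A_s(X)$, and the reverse inclusion is trivial; discarding the empty indices yields $A = \bigcup_{s \in J} A_s(X)$.

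For part (ii), the key identity to establish is
\[
A_\ell(X) = \bigl\{a_i \in A \mid \rho(\widehat{x}_s - a_i) = \min_{1 \le t \le q} \rho(\widehat{x}_t - a_i)\bigr\} \quad \text{for every } \ell \in C_s,
\]
which follows from $x_\ell = \widehat{x}_s$ together with $\min_{1 \le r \le k} \rho(x_r - a_i) = \min_{1 \le t \le q} \rho(\widehat{x}_t - a_i)$. Hence $A_\ell(X)$ is independent of the choice of representative $\ell \in C_s$, and $A_s(X)$ coincides with the attractive set of $\widehat{x}_s$ in the reduced problem with center matrix $\widehat{X}$. Applying the natural clustering construction from the excerpt to $\widehat{X}$ then yields pairwise disjoint subsets $\widehat{A}^s \subseteq A_s(X)$ whose union is $A$ by part (i); this is precisely the natural clustering of $A$ induced by $\{\widehat{x}_s\}_{s \in J}$.

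For the bounds, $|J| \le q \le k$ is immediate from $J \subseteq \{1, \ldots, q\}$ together with the fact that the $C_s$ partition $\{1, \ldots, k\}$. For $|J| \le n$, I would use $\sum_{s=1}^q |\widehat{A}^s| = n$ and identify the effective clusters with the non-empty sets among $\{\widehat{A}^s\}$. The main subtlety will be the tie case in which several distinct centers are equidistant from some $a_i$: there the raw sets $A_s(X)$ genuinely overlap, and both the identification of $\{A_s(X)\}_{s \in J}$ with a natural clustering and the inequality $|J| \le n$ must be interpreted in the disjoint (tie-broken) sense dictated by the $A^\ell$-subtraction rule in the definition of natural clustering.
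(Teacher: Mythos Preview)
Your treatment of (i) is identical to the paper's: choose a minimizing index, locate it in its class $C_s$, and conclude $a_i\in A_s(X)$.

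For (ii) you are more explicit than the paper. The paper simply notes that distinct $s$ correspond to distinct locations $\widehat x_s$ and then asserts that $\{A_s(X)\}_{s\in J}$ ``provides a partition of $A$'' and ``defines a natural clustering'', deriving $|J|\le n$ from the one-liner ``the sets $A_s(X)$ for $s\in J$ are nonempty and are all subsets of $A$''. You instead reduce to the center matrix $\widehat X=(\widehat x_1,\dots,\widehat x_q)$, identify each $A_s(X)$ with the attractive set of $\widehat x_s$ there, and apply the $A^\ell$-subtraction rule to obtain the disjoint $\widehat A^s$. This extra work is not wasted: your closing caveat about ties is exactly the point the paper's argument skips. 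When two distinct $\widehat x_s,\widehat x_{s'}$ are $\rho$-equidistant from some $a_i$, the raw sets $A_s(X)$ overlap, so they are not literally a partition, and ``nonempty subsets of an $n$-point set'' does not by itself force $|J|\le n$ (e.g.\ $n=1$ with two distinct equidistant centers gives $|J|=2$). Your reading---that the conclusion is meant in the tie-broken sense of the natural-clustering construction---is the right way to make both the partition claim and the bound $|J|\le n$ precise, and it is a point on which your write-up is more careful than the paper's own proof.
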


\begin{proof}
(i) Fix any $i\in\{1,\dots,n\}$. Then, there exists at least one index 
$\ell\in\{1,\dots,k\}$ such that
\[
\rho(x_\ell-a_i)=\min_{r=1,\dots,k}\rho(x_r-a_i),
\]
that is, $a_i\in A_\ell(X)$ by Definition \ref{def:A_l}.
On the other hand, each index $\ell\in\{1,\dots,k\}$ belongs to exactly one
class $C_s$ for some $s\in\{1,\dots,q\}$, because
$\widehat x_1,\dots,\widehat x_q$ are precisely the distinct values in
$\{x_1,\dots,x_k\}$. Let $s$ be such that $\ell\in C_s$. Then, by the
definition of $A_s(X)$, we have $A_\ell(X)\subset A_s(X)$. Hence
$a_i\in A_s(X)$.
Since $i$ was arbitrary, every $a_i$ belongs to at least one of the sets
$A_s(X)$, and therefore
\[
A=\bigcup_{s=1}^q A_s(X).
\]
This proves~\textup{(i)}.

\medskip\noindent
(ii) By the definition of $J$, each $A_s(X)$ with $s\in J$ is nonempty. From part~\textup{(i)}, we know that
\[
A=\bigcup_{s=1}^q A_s(X)=\bigcup_{s\in J}A_s(X),
\]
since the sets $A_s(X)$ with $s\notin J$ are empty.
Moreover, if $s\neq s'$, then $\widehat x_s\neq\widehat x_{s'}$, so the
indices in $C_s$ and $C_{s'}$ correspond to different center locations.
 Thus, the
family $\{A_s(X)\}_{s\in I}$ provides a partition of $A$ associated with the
distinct centers $\{\widehat x_s\}_{s\in J}$, and hence defines a natural
clustering of $A$. 

Finally, by construction we have $q\le k$, so $|J|\le q\le k$. On the other
hand, the sets $A_s(X)$ for $s\in J$ are nonempty and are all subsets of $A$,
which implies $|J|\le n$. Combining these inequalities yields
\[
|J|\le\min\{k,n\},
\]
which completes the proof.
\end{proof}

We illustrate Proposition \ref{prop:natural-weaker} by the following example.

\begin{example}\label{ex2:lambda1-k3}
{\rm In the setting of Example \ref{ex:lambda1-k3}, we illustrate Proposition~\ref{prop:natural-weaker} by considering
\[
X=(x_1,x_2,x_3)=(0,\tfrac{1}{2},\tfrac{1}{2}).
\]
The three centers take only two distinct values $
\widehat x_1=0$ and $\widehat x_2=\tfrac{1}{2}.$
Thus, we obtain $q=2$, 
\[
C_1=\{\ell\in\{1,2,3\}\mid x_\ell=\widehat x_1\}=\{1\}\quad\text{and}\quad
C_2=\{\ell\in\{1,2,3\}\mid x_\ell=\widehat x_2\}=\{2,3\}.
\]
We compute $A_\ell[X]$ for $\ell=1,2,3$ explicitly:
\begin{itemize}
\item For $a_1=0$, we have
\[
|x_1-0|=|0-0|=0,\quad
|x_2-0|=\Bigl|\tfrac{1}{2}-0\Bigr|=\tfrac{1}{2},\quad
|x_3-0|=\Bigl|\tfrac{1}{2}-0\Bigr|=\tfrac{1}{2},
\]
 hence$a_1\in A_1(X)$ and $a_1\notin A_2(X)\cup A_3(X).$
\item For $a_2=1$, we have
\[
|x_1-1|=|0-1|=1,\quad
|x_2-1|=\Bigl|\tfrac{1}{2}-1\Bigr|=\tfrac{1}{2},\quad
|x_3-1|=\tfrac{1}{2},
\]
so the minimum is attained at both $\ell=2$ and $\ell=3$, and hence
\[
a_2\in A_2(X)\cap A_3(X),\qquad a_2\notin A_1(X).
\]
\end{itemize}
Thus
\[
A_1(X)=\{a_1\},\qquad A_2(X)=\{a_2\},\qquad A_3(X)=\{a_2\}.
\]
Now we regroup according to the distinct locations $\widehat x_1,\widehat x_2$
as in Proposition~\ref{prop:natural-weaker}:
\[
A_1(X)=\bigcup_{\ell\in C_1}A_\ell(X)=A_1(X)=\{a_1\}\quad\text{and}\quad
A_2(X)=\bigcup_{\ell\in C_2}A_\ell(X)=A_2(X)\cup A_3(X)=\{a_2\}.
\]
Hence
\[
A=A_1(X)\cup A_2(X)=\{a_1,a_2\},
\]
while both $A_1(X)$ and $A_2(X)$ are nonempty. In the notation of
Proposition~\ref{prop:natural-weaker}, we have
\[
J=\{s\in\{1,2\}\mid A_s(X)\neq\emptyset\}=\{1,2\},
\]
so the number of effective clusters is
\[
|J|=2\le\min\{k,n\}=\min\{3,2\}=2.
\]
This example shows how several centers (here $x_2$ and $x_3$) can coincide at
the same location $\widehat x_2=\tfrac{1}{2}$ while still giving rise to only
one effective cluster $A_2(X)=\{a_2\}$. The family
$\{A_s(X)\}_{s\in I}=\bigl\{\{a_1\},\{a_2\}\bigr\}$ is a natural clustering of
$A$ associated with the distinct center locations $\widehat x_1=0$ and
$\widehat x_2=\tfrac{1}{2}$.}
\end{example}

\subsection{Local Optimal Solutions}

This subsection is devoted to studying local optimal solutions of problem~\eqref{maxminn}.

For $X=(x_1,\dots,x_k)\in\R^{k\times d}$ and $\ell\in\{1,\dots,k\}$, set
\[
I_\ell(X)=
\{i\in\{1,\dots,n\}\mid a_i\in A_\ell(X)\},
\]
where $A_\ell(X)$ is defined as in \eqref{def:A_l}.
Then we have $$A_\ell(X)=\{a_i\mid i\in I_\ell(X)\}.$$

\begin{definition}\label{deflocal}
An element $\Bar X=(\bar x_1,\dots,\bar x_k)\in\R^{k\times d}$ is called a
\emph{local optimal solution} of problem~\eqref{maxminn} if there exists
$\varepsilon>0$ such that
\[
f(\Bar X)\;\le\;f(X)
\quad
\text{for all }X=(x_1,\dots,x_k)\in\R^{k\times d}
\text{ with }\|x_\ell-\bar x_\ell\|<\varepsilon
\text{ for all }\ell.
\]
The set of all local optimal solutions will be denoted by $S_{loc}$.
\end{definition}

As in~\cite{LNTV}, it is convenient to encode the assignment of the cluster of each demand point in terms of an index set.

\begin{definition}\label{def2}
For $X=(x_1,\dots,x_k)\in\R^{k\times d}$ and $i\in\{1,\dots,n\}$ we set
\[
L_i(X)=\bigl\{\ell\in\{1,\dots,k\}\mid a_i\in A_\ell(X)\bigr\}.
\]
Then we obtain
\[
\ell\in L_i(X)
\quad\Longleftrightarrow\quad
\rho(x_\ell-a_i)
=
\min_{r=1,\dots,k}\rho(x_r-a_i).
\]
\end{definition}
Thus $L_i(X)$ is the set of indices of all nearest centers to $a_i$. In particular, we have
\begin{equation}\label{indexI}
i\in I_\ell(X)
\quad\Longleftrightarrow\quad
\ell\in L_i(X).    
\end{equation}

The next lemma shows that, when all nearest centers are uniquely defined, the clustering is locally stable under small perturbations of the centers.

\begin{lemma}\label{lem:Li-stable}
Let $\Bar X=(\bar x_1,\dots,\bar x_k)\in\R^{k\times d}$ be such that
$L_i(\Bar X)$ is a singleton for every $i=1,\dots,n$. Then there exists
$\varepsilon>0$ such that for every
$X=(x_1,\dots,x_k)\in\R^{k\times d}$ satisfying
$\|x_\ell-\bar x_\ell\|<\varepsilon$ for all $\ell=1,\dots,k$, we have:
\begin{itemize}
    \item [\rm{(a)}] $L_i(X) = L_i(\Bar X)\quad\text{for all } i=1,\dots,n.$
    \item [\rm{(b)}] 
$I_\ell(X) = I_\ell(\Bar X)\quad\text{and}\quad A_\ell(X)=A_\ell(\Bar X)
\quad\text{for all } \ell=1,\dots,k.$
\end{itemize}
\end{lemma}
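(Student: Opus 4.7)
The plan is to exploit the strict separation gap that the singleton assumption on $L_i(\Bar X)$ guarantees for each data point. First I would introduce, for every $i\in\{1,\dots,n\}$, the unique element $\ell_i$ of $L_i(\Bar X)$ and the gap
\[
\delta_i=\min_{r\neq\ell_i}\rho(\bar x_r-a_i)-\rho(\bar x_{\ell_i}-a_i),
\]
which is strictly positive since $L_i(\Bar X)$ is a singleton. Taking $\delta=\min_{1\le i\le n}\delta_i>0$ yields a uniform positive lower bound on the separation between the unique nearest center and all competitors, valid across all $n$ data points.

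Next I would invoke the Lipschitz continuity of $\rho$ from Lemma~\ref{lemmarho0}(d) to transfer this separation to nearby configurations. For any $X=(x_1,\dots,x_k)$, any $\ell$, and any $i$,
\[
|\rho(x_\ell-a_i)-\rho(\bar x_\ell-a_i)|\le\|F^\circ\|\cdot\|x_\ell-\bar x_\ell\|.
\]
Choosing $\varepsilon>0$ small enough that $\|F^\circ\|\varepsilon<\delta/2$ (for instance $\varepsilon=\delta/(3\|F^\circ\|)$), every gauge value $\rho(x_\ell-a_i)$ differs from $\rho(\bar x_\ell-a_i)$ by strictly less than $\delta/2$ whenever $\|x_\ell-\bar x_\ell\|<\varepsilon$ for all $\ell$.

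For part (a), fix $i$ and any $r\neq\ell_i$. Combining the Lipschitz estimate at the index $\ell_i$ and at the competitor $r$ with the definition of $\delta$ yields
\[
\rho(x_r-a_i)>\rho(\bar x_r-a_i)-\tfrac{\delta}{2}\ge\rho(\bar x_{\ell_i}-a_i)+\tfrac{\delta}{2}>\rho(x_{\ell_i}-a_i),
\]
so $\ell_i$ remains the unique minimizer of $r\mapsto\rho(x_r-a_i)$, and hence $L_i(X)=\{\ell_i\}=L_i(\Bar X)$. Part (b) then follows immediately from the equivalence~\eqref{indexI}: indeed, $i\in I_\ell(X)\Leftrightarrow\ell\in L_i(X)=L_i(\Bar X)\Leftrightarrow i\in I_\ell(\Bar X)$, which gives $I_\ell(X)=I_\ell(\Bar X)$ and therefore $A_\ell(X)=\{a_i\mid i\in I_\ell(X)\}=A_\ell(\Bar X)$.

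The only subtlety is arranging a single separation constant $\delta$ that works uniformly in $i$, which is automatic because $n$ is finite and each $\delta_i$ is strictly positive; beyond this, the argument needs nothing more than the Lipschitz bound from Lemma~\ref{lemmarho0}(d) and the singleton hypothesis on $L_i(\Bar X)$.
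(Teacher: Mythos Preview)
Your proof is correct and follows essentially the same approach as the paper: both use the strict separation guaranteed by the singleton hypothesis together with the continuity of $\rho$ to preserve the nearest-center assignments under small perturbations, and both derive part~(b) directly from the equivalence~\eqref{indexI}. The only minor difference is that you make the argument quantitative by introducing the explicit gap $\delta_i$ and the Lipschitz constant $\|F^\circ\|$ from Lemma~\ref{lemmarho0}(d), whereas the paper simply appeals to continuity to obtain an $\varepsilon_i$ for each $i$ and then takes $\varepsilon=\min_i\varepsilon_i$.
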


\begin{proof}
(a) For each $i\in\{1,\dots,n\}$, let $\ell(i)$ be the unique element of
$L_i(\Bar X)$. By Definition \ref{def1} and \ref{def2}, we have
\[
\rho(\bar x_{\ell(i)}-a_i)
<
\rho(\bar x_\ell-a_i)
\qquad\text{for all }\ell\in\{1,\dots,k\}\setminus\{\ell(i)\}.
\]
Using the continuity of $\rho$, for every $i\in \{1,\dots,n\}$ we can choose
$\varepsilon_i>0$ such that
\[
\rho(x_{\ell(i)}-a_i)
<
\rho(x_\ell-a_i)
\qquad\text{for all }\ell\in\{1,\dots,k\}\setminus\{\ell(i)\},
\]
whenever $X=(x_1,\dots,x_k)\in \R^{k\times d}$ satisfies
$\|x_\ell-\bar x_\ell\|<\varepsilon_i$ for all $\ell=1,\dots,k$.
For such $X$, we then have $$L_i(X)=\{\ell(i)\}=L_i(\Bar X).$$
Now take $\varepsilon=\min\{\varepsilon_1,\dots,\varepsilon_n\}$ and fix any
$X=(x_1,\dots,x_k)$ with $\|x_\ell-\bar x_\ell\|<\varepsilon$ for all $\ell$.
From the previous paragraph we obtain $L_i(X)=L_i(\Bar X)$ for all $i$,
which proves the first assertion.

(b) Fix any $\ell\in\{1,\dots,k\}$. Then it follows from \eqref{indexI} that
\[
i\in I_\ell(\Bar X)
\quad\Longleftrightarrow\quad
\ell\in L_i(\Bar X)
\quad\text{and}\quad
i\in I_\ell(X)
\quad\Longleftrightarrow\quad
\ell\in L_i(X).
\]
Since $L_i(X)=L_i(\Bar X)$ for all $i$, the two index sets coincide:
$I_\ell(X)=I_\ell(\Bar X)$. Hence we also obtain
$$A_\ell(X)=\{a_i\mid i\in I_\ell(X)\}=\{a_i\mid i\in I_\ell(\Bar X)\}
=A_\ell(\Bar X).$$ The proof is complete.
\end{proof}

We now derive a necessary optimality condition for local optimal solutions of
problem~\eqref{maxminn}.

\begin{theorem}\label{thm:local}
Let $\Bar X=(\bar x_1,\dots,\bar x_k)\in\mathbb{R}^{k\times d}$ be such that $L_i(\Bar X)$ is a singleton for every $i=1,\dots,n$. If $\Bar X$ is a local optimal solution of problem~\eqref{maxminn}, then for each $\ell\in\{1,\dots,k\}$, the point $\bar x_\ell$ is a global minimizer of the problem
\begin{equation}\label{eq:phi-ell}
\min_{x\in\mathbb{R}^d} \Biggl\{
\phi_\ell(x) = \sum_{i\in I_\ell(\Bar X)}\rho(x-a_i) + \frac{\lambda n}{2} \sum_{j=1,\; j \neq \ell}^k \|x-\bar x_j\|^2
\Biggr\}.
\end{equation}
\end{theorem}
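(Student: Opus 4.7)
The plan is to exploit the stability result in Lemma~\ref{lem:Li-stable} to reduce $f$ locally near $\bar X$ to a separable-looking expression with a frozen assignment, and then observe that the partial function in each block coordinate is convex, so local optimality automatically upgrades to the global optimality claimed in \eqref{eq:phi-ell}.

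First I would apply Lemma~\ref{lem:Li-stable} to obtain an $\varepsilon_0>0$ such that for every $X$ with $\|x_j-\bar x_j\|<\varepsilon_0$ for all $j$, each $L_i(X)$ equals the singleton $L_i(\bar X)=\{\ell(i)\}$, and hence the assignment sets $I_\ell(X)$ coincide with $I_\ell(\bar X)$. In this neighborhood the ``min'' in the data-fidelity term of $f$ is realized by the same index $\ell(i)$ as at $\bar X$, so one may rewrite
\[
f(X)=\sum_{\ell=1}^k\sum_{i\in I_\ell(\bar X)}\rho(x_\ell-a_i)+\frac{\lambda n}{2}\sum_{1\le s<t\le k}\|x_s-x_t\|^2.
\]

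Next, fix $\ell$ and shrink $\varepsilon_0$ to a smaller $\varepsilon>0$ (permitted by Definition~\ref{deflocal}). Consider the one-block perturbation $X(x)$ obtained from $\bar X$ by replacing $\bar x_\ell$ with $x\in B(\bar x_\ell,\varepsilon)$ and leaving the other centers unchanged. Substituting into the previous display and collecting the terms that do not depend on $x$ into a constant $c_\ell$, I would observe that only the data terms indexed by $I_\ell(\bar X)$ and the fusion terms containing index $\ell$ survive, giving
\[
f(X(x))=\phi_\ell(x)+c_\ell,
\]
with $\phi_\ell$ exactly as in \eqref{eq:phi-ell}. Since $\bar X$ is a local minimizer of $f$, this identity forces $\bar x_\ell$ to be a local minimizer of $\phi_\ell$.

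Finally I would upgrade ``local'' to ``global.'' The function $\phi_\ell$ is convex on $\mathbb{R}^d$: each $\rho(\cdot-a_i)$ is convex because $\rho$ is sublinear by Lemma~\ref{lemmarho0}(a)(b), and $\|\cdot-\bar x_j\|^2$ is convex quadratic, so $\phi_\ell$ is a finite sum of convex functions. For a convex function on $\mathbb{R}^d$ every local minimizer is a global minimizer, so $\bar x_\ell$ globally minimizes $\phi_\ell$, as required. The main technical point is the book-keeping in passing from local optimality of $\bar X$ in the coupled problem to local optimality of $\bar x_\ell$ in each decoupled subproblem; once the singleton assumption on $L_i(\bar X)$ via Lemma~\ref{lem:Li-stable} freezes the assignment, everything else is immediate from the convexity of $\phi_\ell$.
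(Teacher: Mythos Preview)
Your proposal is correct and follows essentially the same approach as the paper: freeze the assignment via Lemma~\ref{lem:Li-stable}, perturb a single block coordinate so that $f(X(x))=\phi_\ell(x)+c_\ell$, deduce local optimality of $\bar x_\ell$ for $\phi_\ell$, and then upgrade to global optimality by convexity. The paper's proof differs only cosmetically, splitting $f=D+C$ and computing $f(\widehat X)-f(\bar X)=\phi_\ell(x)-\phi_\ell(\bar x_\ell)$ directly rather than introducing the constant $c_\ell$.
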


\begin{proof}
By Lemma~\ref{lem:Li-stable}, the assumption that $L_i(\Bar X)$ is a singleton
for every $i$ implies the following stability property, i.e., there exists
$\varepsilon>0$ such that, for all $X=(x_1,\dots,x_k)\in\R^{k\times d}$ with
\begin{equation}\label{eq4}
\|x_\ell-\bar x_\ell\|<\varepsilon\qquad\text{for all }\ell=1,\dots,k,
\end{equation}
we have
\begin{equation}\label{eq5}
L_i(X)=L_i(\Bar X)\qquad\text{for all }i=1,\dots,n.
\end{equation}
In particular, in the neighbourhood defined by~\eqref{eq4}, we have
\begin{equation}\label{key15}
    I_\ell(X)=I_\ell(\Bar X)
\quad\text{and}\quad
A_\ell(X)=A_\ell(\Bar X)
\qquad\text{for all }\ell=1,\dots,k.
\end{equation}
Suppose now that $\Bar X$ is a local optimal solution of~\eqref{maxminn}. 
By Definition~\ref{deflocal}, there exists $\delta>0$ such that
\[
f(\Bar X)\;\le\;f(X)
\quad
\text{for all }X=(x_1,\dots,x_k)\in\R^{k\times d}
\text{ with }\|x_\ell-\bar x_\ell\|<\delta
\text{ for all }\ell.
\]
Let $\varepsilon>0$ be as in~\eqref{eq4}, and set $
\gamma = \min\{\varepsilon,\delta\}.$
 For a fixed index $\ell$, take any $x\in\mathbb{R}^d$ such that $\|x-\bar x_\ell\| < \gamma$ and define 
 \[
\widehat x_\ell = x
\quad\text{and}\quad
\widehat x_r = \bar x_r \ \text{ for } r\neq\ell.
\]
Then $\widehat X$ satisfies \eqref{eq4}, and by local optimality we have
\begin{equation}\label{eq6}
f(\Bar X)\le f(\widehat X).
\end{equation}
To compare these two values, write
\begin{equation}\label{fDC}
f(X)=D(X)+C(X),    
\end{equation}
where
\[
D(X)=\sum_{i=1}^n\min_{1\le \ell\le k}\rho(x_\ell-a_i)
\quad\text{and}\quad
C(X)=\frac{\lambda n}{2}\sum_{1\le s<t\le k}\|x_s-x_t\|^2.
\]
 By~\eqref{eq5} and \eqref{key15}, we have
\begin{equation*}\label{eq:D-bar}
D(\Bar X)
=
\sum_{i\in I_\ell(\Bar X)}\rho(\bar x_\ell-a_i)
+
\sum_{i\notin I_\ell(\Bar X)}\min_{1\le r\le k}\rho(\bar x_r-a_i),
\end{equation*}
and
\begin{equation*}\label{eq:D-hat}
\begin{aligned}
     D(\widehat X)
     &=
\sum_{i\in I_\ell(\widehat X)}\rho(x-a_i)
+
\sum_{i\notin I_\ell(\widehat X)}\min_{1\le r\le k}\rho(\bar x_r-a_i) \\
     &=
\sum_{i\in I_\ell(\Bar X)}\rho(x-a_i)
+
\sum_{i\notin I_\ell(\Bar X)}\min_{1\le r\le k}\rho(\bar x_r-a_i),
\end{aligned}
\end{equation*}
which implies that
\begin{equation}\label{eq:D-diff}
D(\widehat X) - D(\Bar X) = \sum_{i\in I_\ell(\Bar X)} \rho(x-a_i) - \sum_{i\in I_\ell(\Bar X)} \rho(\bar x_\ell-a_i).
\end{equation}

For the fusion term, we can group as follows:
\begin{equation}\label{eq:C-bar-new}
C(\Bar{X}) = \frac{\lambda n}{2} \left( \sum_{j=1,\; j \neq \ell}^k \|\bar{x}_\ell - \bar{x}_j\|^2 + \sum_{1 \le s < t \le k,\; s,t \neq \ell} \|\bar{x}_s - \bar{x}_t\|^2 \right).
\end{equation}
Similarly for $\widehat{X}$, we have
\begin{equation}\label{eq:C-hat-new}
C(\widehat{X}) = \frac{\lambda n}{2} \left( \sum_{j=1,\; j \neq \ell}^k \|x - \bar{x}_j\|^2 + \sum_{1 \le s < t \le k,\; s,t \neq \ell} \|\bar{x}_s - \bar{x}_t\|^2 \right).
\end{equation}
Subtracting \eqref{eq:C-bar-new} from \eqref{eq:C-hat-new} yields
\begin{equation*}\label{eq:C-diff}
C(\widehat{X}) - C(\Bar{X}) = \frac{\lambda n}{2} \sum_{j=1,\; j \neq \ell}^k \left( \|x - \bar{x}_j\|^2 - \|\bar{x}_\ell - \bar{x}_j\|^2 \right).
\end{equation*}
Combining this with \eqref{fDC} and
\eqref{eq:D-diff}, we obtain
\begin{equation}
f(\widehat X) - f(\Bar X) = \phi_\ell(x) - \phi_\ell(\bar x_\ell),
\end{equation}
where $\phi_\ell$ is defined in \eqref{eq:phi-ell}. It follows from \eqref{eq6} that
$\phi_\ell(\bar x_\ell) \le \phi_\ell(x)$ for all $x$ in a neighborhood of $\bar x_\ell$. 
Since $\phi_\ell$ is convex,
every local minimizer is a global minimizer. Hence
$\bar x_\ell$ is a global solution of~\eqref{eq:phi-ell} for every $\ell$,
which proves the theorem.
\end{proof}

\begin{remark}
When $\lambda=0$, the penalty term $C(X)$ vanishes and the functions
$\phi_\ell$ in~\eqref{eq:phi-ell} reduce to
\[
\phi_\ell(x)=\sum_{i\in I_\ell(\Bar X)}\rho(x-a_i),
\]
so Theorem~\ref{thm:local} recovers the necessary optimality condition for the
generalized multi-source Weber problem obtained in~\cite[Theorem~3.11]{LNTV}.
For $\lambda>0$, we do not claim that the converse implication holds in general.
\end{remark}

The following example shows that the condition in Theorem~\ref{thm:local} is genuinely necessary.

\begin{example}\label{ex:local-thm-lambda-1/2}
{\rm
Consider two data points
$a_1=0,$ $a_2=2,$ $k=2$ centers and $\rho(x)=|x|$ for all $x\in\R$. For $X=(x_1,x_2)\in\R^{2\times 1}$, the
objective function in~\eqref{maxminn} with $\lambda=\tfrac12$ reads
\[
f(x_1,x_2)
=
\sum_{i=1}^2 \min_{\ell=1,2}|x_\ell-a_i|
+\frac12\,|x_1-x_2|^2.
\]
For $\Bar X=(\bar x_1,\bar x_2)=(0,2)$,
we have
\[\min_{\ell=1,2}|\bar x_\ell-a_1|=\min\{|0-0|,\;|2-0|\}=0\]
and
\[\min_{\ell=1,2}|\bar x_\ell-a_2|=\min\{|0-2|,\;|2-2|\}=0.\]
Thus we obtain
\[f(\Bar X)=0+0+\frac12\,|0-2|^2=2,\]
\[L_1(\Bar X)=\{1\}\quad\text{and}\quad L_2(\Bar X)=\{2\},\]
so the assumption in Theorem~\ref{thm:local} (each $L_i(\Bar X)$ is a singleton)
is satisfied.

Next, we compute the functions $\phi_\ell$ in~\eqref{eq:phi-ell} at $\Bar X$.
Since $n=2$ and $k=2$, for $\ell=1$ we have
\[
\phi_1(x)
=
\sum_{i\in I_1(\Bar X)}\rho(x-a_i)
+
\frac{\lambda n}{2}
\sum_{j=2}^2 \rho^2(x-\bar x_j)
=|x|+\frac12(x-2)^2,\quad x\in\R.
\]
Similarly, for $\ell=2$ we have
\[
\phi_2(x)
=
\sum_{i\in I_2(\Bar X)}\rho(x-a_i)
+
\frac{\lambda n}{2}
\sum_{r=1}^1 \rho^2(\bar X_r-x)
=|x-2|+\frac12 x^2,\quad x\in\R.
\]
Clearly,
$\phi_1$ and $\phi_2$ are convex on $\R$.
 A direct computation shows that
\[
\arg\min \phi_1 = \{1\}\quad\text{and}\quad \arg\min \phi_2 = \{1\},
\]
while the centers at $\Bar X$ are
$\bar x_1=0$ and $\bar x_2=2.$
Thus neither $\bar x_1$ nor $\bar x_2$ is a global minimizer of the corresponding
convex problem~\eqref{eq:phi-ell}.
By Theorem~\ref{thm:local},  $\Bar X=(0,2)$ \emph{cannot} be a local optimal solution of
problem~\eqref{maxminn} when $\lambda=\tfrac12$, even though each
$L_i(\Bar X)$ is a singleton. This illustrates the \emph{necessity} of the
condition in Theorem~\ref{thm:local}.

\emph{Direct verification that $\Bar X$ is not a local minimizer for $\lambda=\tfrac12$.}
Now perturb only the first center and keep the second one fixed.
For $t\in(0,1)$, consider $X(t)=(x_1,x_2)=(t,2).$
For the demand point $a_1=0$, we have
\[|x_1-a_1|=|t-0|=t\quad\text{and}\quad |x_2-a_1|=|2-0|=2,\] and hence 
\[\min_{\ell=1,2}|x_\ell-a_1|=t.\]
For the demand point $a_2=2$, we have
\[|x_1-a_2|=|t-2|=2-t\quad\text{and}\quad |x_2-a_2|=|2-2|=0,\]
and hence 
\[\min_{\ell=1,2}|x_\ell-a_2|=0.\]
The penalty term is
\[\frac12\,|x_1-x_2|^2=\frac12\,|t-2|^2=\frac12\,(2-t)^2.\]
Consequently,
\[f(X(t))=t+\frac12\,(2-t)^2=
2 - t + \frac12\,t^2.\]
We compare this value with $f(\Bar X)=2$:
\[f(X(t)) - f(\Bar X)=
\bigl(2 - t + \tfrac12 t^2\bigr) - 2=
-t\Bigl(1-\frac{t}{2}\Bigr).\]
For $t\in(0,1)$, we have $1-\tfrac{t}{2}>0$, and hence
\[f(X(t)) - f(\Bar X) < 0.\]
In other words,
\[f(X(t)) < f(\Bar X)
\qquad\text{for all }t\in(0,1).\]
Moreover, $X(t)\to\Bar X$ as $t\to 0^+$, so there are points arbitrarily
close to $\Bar X$ with strictly smaller objective value. Therefore,
$\Bar X=(0,2)$ is \emph{not} a local optimal solution of problem~\eqref{maxminn} when $\lambda=\tfrac12$. 
This direct verification is consistent with Theorem~\ref{thm:local}: in this
example $L_i(\Bar X)$ is a singleton for each $i$, but the centers $\bar x_1,\bar x_2$ do not minimize the corresponding convex functions
$\phi_1,\phi_2$, and hence $\Bar X$ cannot be locally optimal.}

\end{example}

We conclude this subsection with an example that illustrates Theorem~\ref{thm:local}
and shows that the set of global optimal solutions is strictly contained in the
set of local optimal solutions.

\begin{example}\label{ex:local-lambda>0}
{\rm We work in $\R$ and take
$\rho(x)=|x|$. Consider the data set $A=\{0,3,4\}$
and $k=2$ centers. For $X=(x_1,x_2)\in\R^{2\times 1}$, the objective function
in~\eqref{maxminn} reads
\[
f(x_1,x_2)
=
\sum_{i=1}^3 \min_{\ell=1,2}|x_\ell-a_i|
+\frac{3\lambda}{2}\,|x_1-x_2|^2.
\]
We show that for small $\lambda>0$, there exists a point which is a
local, but not global, minimizer of $f$, that is,
$$S\subsetneq S_{loc}.$$

\medskip\noindent
\textbf{Step 1:}
Let $\Bar X=(\bar x_1,\bar x_2)=(3,4).$
Then a direct computation shows that
\[I_1(\Bar X)=\{1,2\}\quad\text{and}\quad
I_2(\Bar X)=\{3\}.\]
By the
continuity of the absolute value, there exists $\epsilon>0$ such that, whenever
\[
|x_1-3|<\epsilon
\quad\text{and}\quad
|x_2-4|<\epsilon,
\]
the same cluster assignment is preserved, i.e.,
\[
I_1(X)=\{1,2\}\quad\text{and}\quad I_2(X)=\{3\}.
\]
Moreover, shrinking $\epsilon$ if necessary, we may assume that $x_1>0$ whenever
$|x_1-3|<\epsilon$. This ensures that $|x_1|=x_1$ in this neighbourhood.
For any $X=(x_1,x_2)$ satisfying $|x_1-3|<\epsilon$ and $|x_2-4|<\epsilon$, we have
\[
\min_{\ell=1,2}|x_\ell-a_1|=|x_1-0|=|x_1|,
\quad
\min_{\ell=1,2}|x_\ell-a_2|=|x_1-3|,
\quad
\min_{\ell=1,2}|x_\ell-a_3|=|x_2-4|.
\]
Hence, in this neighbourhood,
\begin{equation}\label{eq:g-local}
f(x_1,x_2)
=
|x_1|+|x_1-3|+|x_2-4|
+\frac{3\lambda}{2}(x_1-x_2)^2.
\end{equation}
Moreover, we obtain
\[
f(\Bar X)
=
|3|+|3-3|+|4-4|
+\frac{3\lambda}{2}(3-4)^2
=
3+\frac{3\lambda}{2}.
\]
To study local optimality, write
\[
x_1=3+\delta_1,
\qquad
x_2=4+\delta_2,
\]
with $|\delta_1|<\epsilon$ and $|\delta_2|<\epsilon$. Since $x_1>0$, in this neighbourhood
we have $|x_1|=3+\delta_1$. Then it follows from~\eqref{eq:g-local} that
\[
\begin{aligned}
f(3+\delta_1,4+\delta_2)
&=
(3+\delta_1) + |\delta_1| + |\delta_2|
+\frac{3\lambda}{2}\bigl[(3+\delta_1)-(4+\delta_2)\bigr]^2\\
&=
3+\delta_1 + |\delta_1| + |\delta_2|
+\frac{3\lambda}{2}\,(-1+\delta_1-\delta_2)^2.
\end{aligned}
\]
Therefore
\[
\begin{aligned}
\Delta(\delta_1,\delta_2)
&= f(3+\delta_1,4+\delta_2) - f(3,4)\\
&=
\delta_1 + |\delta_1| + |\delta_2|
+\frac{3\lambda}{2}\Bigl[(-1+\delta_1-\delta_2)^2-1\Bigr]\\
&=\delta_1 + |\delta_1| + |\delta_2|
-3\lambda(\delta_1-\delta_2)
+\frac{3\lambda}{2}(\delta_1-\delta_2)^2\\
&=(1-3\lambda)\delta_1 + 3\lambda\delta_2
+|\delta_1|+|\delta_2|
+\frac{3\lambda}{2}(\delta_1-\delta_2)^2.
\end{aligned}\]
Note that we always have
\[
(1-3\lambda)\delta_1 + 3\lambda\delta_2
\ge
-(1-3\lambda)|\delta_1| - 3\lambda|\delta_2|.
\]
Substituting into the expression for $\Delta$ and discarding the nonnegative
quadratic term, we obtain
\[
\begin{aligned}
\Delta(\delta_1,\delta_2)
&\ge
-(1-3\lambda)|\delta_1| - 3\lambda|\delta_2|
+|\delta_1|+|\delta_2|\\
&=
3\lambda|\delta_1| + (1-3\lambda)|\delta_2|.
\end{aligned}
\]
If $0<\lambda<\tfrac13$, then $3\lambda>0$ and $1-3\lambda>0$. Therefore,
for any $(\delta_1,\delta_2)\ne(0,0)$ we have
\[
\Delta(\delta_1,\delta_2)>0,
\]
which yields that
\[
f(3+\delta_1,4+\delta_2)
>
f(3,4)
\qquad
\text{for all }(\delta_1,\delta_2)\ne(0,0)
\text{ with }|\delta_1|,|\delta_2|<\epsilon.
\]
In other words, for every $\lambda\in(0,\tfrac13)$, the point $\Bar X=(3,4)$
is a \emph{strict local minimizer} of $f$.

\medskip\noindent
\textbf{Step 2:}
We now compare $\Bar X$ with $X_0=(0,3)$. We first have
\[
f(X_0)=1+13.5\,\lambda.
\]
Therefore,
\[
f(X_0)<f(\Bar X)
\quad\Longleftrightarrow\quad
1+13.5\lambda<3+1.5\lambda
\quad\Longleftrightarrow\quad
\lambda<\frac{1}{6}.
\]

\medskip\noindent
\textbf{Conclusion.}
Combining Step~1 and Step~2, we see that for every $\lambda\in(0,\tfrac16)$,
the point $\Bar X=(3,4)$ satisfies:
\begin{itemize}
\item $\Bar X$ is a strict local minimizer of $f$ (by the direct
computation in Step~1);
\item $\Bar X$ is not a global minimizer of $f$, since
$f(X_0)<f(\Bar X)$ (Step~2).
\end{itemize}
Hence, for any $\lambda\in(0,\tfrac16)$, we have
\[
S\subsetneq S_{loc}.
\]}
\end{example}

\section{Stability of the Optimal Value Function and the Solution Mapping} \label{section3}

In this section, we study the dependence of the optimal value on the data set
\[
A = \{a_1,\dots,a_n\}\subset\R^d.
\]
For convenience, we identify the data set $A$ with the data matrix $\mathbf{A} = \{a_1, \dots, a_n\}\in\R^{n\times d}$. We also equip both $\R^{n\times d}$ and
$\R^{k\times d}$ with the Frobenius norm
\[
\|\mathbf{A}\|_F = \sqrt{\sum_{i=1}^n \|a_i\|^2}
\quad\text{and}\quad
\|X\|_F = \sqrt{\sum_{r=1}^k \|X_r\|^2}.
\]
To emphasize the role of the data matrix $\mathbf{A}$, we write the objective function of \eqref{maxminn} as
\[
f(X;\mathbf{A})  
=
D(X;\mathbf{A})+C(X)\quad\text{for }X=(x_1,\dots,x_k)\in\R^{k\times d}
\]
where
\[
D(X;\mathbf{A})
=
\sum_{i=1}^n \min_{1\le \ell \le k}\rho(x_\ell-a_i)
\quad\text{and}\quad
C(X)
=
\frac{\lambda n}{2}\sum_{1\le s<t\le k}\|x_s-x_t\|^2.
\]
Note that the penalty term $C(X)$ does not depend on $\mathbf{A}$. 

\begin{definition}
The \textit{optimal value function} $\mathcal{V} \colon \mathbb{R}^{n \times d} \to \mathbb{R}$ is defined by
\begin{equation}\label{valueoptimalfunction}
\mathcal{V}(\mathbf{A}) = \inf_{X \in \mathbb{R}^{k \times d}} f(X; \mathbf{A}).
\end{equation}
Furthermore, the \textit{global optimal solution mapping}  $S \colon \mathbb{R}^{n \times d} \rightrightarrows \mathbb{R}^{k \times d}$, which assigns each data matrix $\mathbf{A}\in \mathbb{R}^{n \times d}$ to the set of its global minimizers for problem \eqref{maxminn}, is given by
\begin{equation}\label{solutionmapping}
S(\mathbf{A}) = \left\{ \Bar{X} \in \mathbb{R}^{k \times d} \mid f(\Bar{X}; \mathbf{A}) = \mathcal{V}(\mathbf{A}) \right\}.
\end{equation}
\end{definition}

We first establish a Lipschitz estimate in the data for the data-fitting part
$D(X;\mathbf{A})$ with $X$ fixed.

\begin{lemma}\label{lem:Lip-D}
Let $X\in\R^{k\times d}$ be fixed. Then the mapping
\[
\R^{n\times d}\ni \mathbf{A}\ \longmapsto\ D(X;\mathbf{A})
\]
is globally Lipschitz continuous with
\[
|D(X;\mathbf{A})-D(X;\mathbf{B})|
\;\le\;
\|F^\circ\|\sqrt{n}\,\|\mathbf{A}-\mathbf{B}\|_F
\qquad
\text{for all }\mathbf{A},\mathbf{B}\in\R^{n\times d}.
\]
In particular, the Lipschitz constant does not depend on $X$.
\end{lemma}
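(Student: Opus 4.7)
The plan is to reduce the bound on $|D(X;\mathbf{A}) - D(X;\mathbf{B})|$ to a per-point estimate, by exploiting that $D$ decouples as a sum over the data indices $i=1,\dots,n$. For each fixed $X$ and each index $i$, I would define the scalar function
\[
g_i(a) = \min_{1\le\ell\le k}\rho(x_\ell - a),\qquad a\in\R^d,
\]
and observe that $D(X;\mathbf{A}) - D(X;\mathbf{B}) = \sum_{i=1}^n\bigl(g_i(a_i) - g_i(b_i)\bigr)$.

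The first main step is to show that each $g_i$ is $\|F^\circ\|$-Lipschitz on $\R^d$. For a single term, Lemma~\ref{lemmarho0}(d) gives $|\rho(u) - \rho(v)| \le \|F^\circ\|\,\|u-v\|$ for all $u,v\in\R^d$; applied to $u = x_\ell - a$ and $v = x_\ell - b$ this yields $|\rho(x_\ell - a) - \rho(x_\ell - b)| \le \|F^\circ\|\,\|a-b\|$ for every $\ell$. Then I would invoke the standard fact that a finite minimum of $L$-Lipschitz functions is $L$-Lipschitz, namely $|\min_\ell\alpha_\ell - \min_\ell\beta_\ell| \le \max_\ell|\alpha_\ell - \beta_\ell|$, to conclude that $|g_i(a) - g_i(b)| \le \|F^\circ\|\,\|a-b\|$. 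Crucially, this constant depends only on the set $F$, not on $X$ or on the index $i$.

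The second step is an application of the triangle inequality followed by Cauchy--Schwarz to recover the Frobenius norm on the right-hand side:
\[
|D(X;\mathbf{A}) - D(X;\mathbf{B})|
\le \sum_{i=1}^n |g_i(a_i) - g_i(b_i)|
\le \|F^\circ\|\sum_{i=1}^n\|a_i - b_i\|
\le \|F^\circ\|\sqrt{n}\,\|\mathbf{A}-\mathbf{B}\|_F,
\]
where the last inequality uses $\sum_i\|a_i-b_i\| \le \sqrt{n}\bigl(\sum_i\|a_i-b_i\|^2\bigr)^{1/2}$ together with the definition of the Frobenius norm on $\R^{n\times d}$. Since neither $\|F^\circ\|$ nor $\sqrt{n}$ involves $X$, the Lipschitz constant is uniform in $X$, as claimed.

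I do not anticipate a genuine obstacle here: the argument is a clean three-line chain once the Lipschitz-of-a-minimum observation is recorded. The only point that deserves a moment of care is the direction of the Lipschitz bound for $\rho$ with respect to the \emph{second} argument $a$ (rather than $x_\ell$), which is why I would explicitly substitute $u = x_\ell-a$, $v = x_\ell-b$ into Lemma~\ref{lemmarho0}(d) so that the cancellation $u-v = b-a$ makes the $X$-independence transparent.
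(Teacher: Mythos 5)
Your proposal is correct and follows essentially the same route as the paper: decompose $D$ as a sum over $i$, use that a finite minimum of $\|F^\circ\|$-Lipschitz functions is $\|F^\circ\|$-Lipschitz, and finish with the triangle inequality and Cauchy--Schwarz. The only cosmetic difference is that you invoke Lemma~\ref{lemmarho0}(d) directly for the per-term bound, whereas the paper derives the same estimate from parts (b) and (g); both yield the identical constant $\|F^\circ\|\sqrt{n}$, independent of $X$.
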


\begin{proof}
Consider two arbitrary data matrices
$\mathbf{A}=(a_1,\dots,a_n)$, $\mathbf{B}=(b_1,\dots,b_n)\in\R^{n\times d}$. For
each $i\in\{1,\dots,n\}$, we define
\[
\psi_i(\mathbf{A})
=
\min_{1\le \ell \le k}\rho(x_\ell-a_i)
\quad\text{and}\quad
\psi_i(\mathbf{B})
=
\min_{1\le \ell \le k}\rho(x_\ell-b_i).
\]
Then
\[
D(X;\mathbf{A})
=
\sum_{i=1}^n \psi_i(\mathbf{A})
\quad\text{and}\quad
D(X;\mathbf{B})
=
\sum_{i=1}^n \psi_i(\mathbf{B}),
\]
which implies that
\[
\bigl|D(X;\mathbf{A})-D(X;\mathbf{B})\bigr|
=
\Bigl|\sum_{i=1}^n\psi_i(\mathbf{A})-\sum_{i=1}^n\psi_i(\mathbf{B})\Bigr|
\le
\sum_{i=1}^n|\psi_i(\mathbf{A})-\psi_i(\mathbf{B})|.
\]
Fix $i\in\{1,\dots,n\}$. Since $\psi_i$ is the minimum of finitely many
$\|F^\circ\|$--Lipschitz functions in $a_i$, we have
\[
|\psi_i(\mathbf{A})-\psi_i(\mathbf{B})|
\le
\max_{1\le \ell \le k}
\bigl|
\rho(x_\ell-a_i)-\rho(x_\ell-b_i)
\bigr|.
\]
Using Lemma \ref{lemmarho0}(b) and (g), we obtain
\[
\bigl| \rho(x_\ell-a_i)-\rho(x_\ell-b_i) \bigr| \le \rho(a_i-b_i) \le \|F^\circ\| \|a_i-b_i\|
\]
for all $\ell=1,\dots,k$. Thus, it follows that
\[
|\psi_i(\mathbf{A})-\psi_i(\mathbf{B})| \le \|F^\circ\| \|a_i-b_i\|.
\]
Summing over $i=1,\dots,n$ and applying the Cauchy–Schwarz inequality, we have
\[
|D(X;\mathbf{A})-D(X;\mathbf{B})| \le \|F^\circ\| \sum_{i=1}^n \|a_i-b_i\| \le \|F^\circ\| \sqrt{n} \|\mathbf{A}-\mathbf{B}\|_F.
\]
This completes
the proof.
\end{proof}

We now establish a global Lipschitz property for the optimal value function
$\mathcal{V}$ defined in \eqref{valueoptimalfunction}.

\begin{theorem}\label{thm:Lip-vk}
The optimal value function $\mathcal{V}(\cdot)$ of problem \eqref{maxminn}
is globally Lipschitz continuous with respect to the Frobenius norm. More
precisely,
\[
|\mathcal{V}(\mathbf{A})-\mathcal{V}(\mathbf{B})|
\;\le\;
\|F^\circ\|\sqrt{n}\,\|\mathbf{A}-\mathbf{B}\|_F
\qquad
\text{for all }\mathbf{A},\mathbf{B}\in\R^{n\times d}.
\]
\end{theorem}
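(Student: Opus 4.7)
The plan is to reduce the result to the uniform Lipschitz estimate for the data-fitting part established in Lemma~\ref{lem:Lip-D}, by exploiting the fact that the penalty term $C(X)$ does not depend on the data matrix and therefore cancels cleanly when the two optimal values are compared.

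First, I would invoke Theorem~\ref{thm1} to guarantee that for each $\mathbf{A},\mathbf{B}\in\R^{n\times d}$ the set $S(\mathbf{A})$ and $S(\mathbf{B})$ are nonempty, so that I can pick actual minimizers $X_\mathbf{A}\in S(\mathbf{A})$ and $X_\mathbf{B}\in S(\mathbf{B})$ satisfying $\mathcal{V}(\mathbf{A})=f(X_\mathbf{A};\mathbf{A})$ and $\mathcal{V}(\mathbf{B})=f(X_\mathbf{B};\mathbf{B})$. Using $X_\mathbf{B}$ as a feasible test point for the problem with data $\mathbf{A}$ and writing $f(X;\mathbf{A})=D(X;\mathbf{A})+C(X)$, I obtain
\[
\mathcal{V}(\mathbf{A})-\mathcal{V}(\mathbf{B})
\le f(X_\mathbf{B};\mathbf{A})-f(X_\mathbf{B};\mathbf{B})
= D(X_\mathbf{B};\mathbf{A})-D(X_\mathbf{B};\mathbf{B}),
\]
since the penalty $C(X_\mathbf{B})$ cancels. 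Applying Lemma~\ref{lem:Lip-D} with $X=X_\mathbf{B}$ bounds the right-hand side by $\|F^\circ\|\sqrt{n}\,\|\mathbf{A}-\mathbf{B}\|_F$.

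Then I would repeat the argument with the roles of $\mathbf{A}$ and $\mathbf{B}$ swapped, using $X_\mathbf{A}$ as the test point for the problem with data $\mathbf{B}$, to obtain the reverse inequality with the same constant. Combining the two yields the two-sided estimate claimed in the theorem.

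The whole argument is the standard \emph{infimum of a uniformly Lipschitz family is Lipschitz with the same constant} trick, so no step is genuinely hard; the only thing to make sure of is that one is allowed to pick genuine minimizers (this is where Theorem~\ref{thm1} is used; otherwise one would have to use near-minimizers and pass to the limit $\varepsilon\to 0$, which works equally well). The crucial structural observation that makes the constant come out exactly $\|F^\circ\|\sqrt{n}$ and not something depending on the centers $X$ is that the Lipschitz constant supplied by Lemma~\ref{lem:Lip-D} is uniform in $X$, combined with the fact that $C(X)$ is $\mathbf{A}$-free and therefore drops out of the difference.
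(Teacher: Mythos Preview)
Your proposal is correct and follows essentially the same route as the paper: both reduce to Lemma~\ref{lem:Lip-D}, use a minimizer (or near-minimizer) of one problem as a test point for the other, cancel the data-independent penalty $C(X)$, and then swap the roles of $\mathbf{A}$ and $\mathbf{B}$. The only cosmetic difference is that the paper works with $\varepsilon$-minimizers and lets $\varepsilon\to0$ rather than invoking Theorem~\ref{thm1} to pick exact minimizers, a variant you already anticipated.
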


\begin{proof}
Let $\mathbf{A},\mathbf{B}\subset\R^{n\times d}$ and fix $\varepsilon>0$. By the definition of $\mathcal{V}(\mathbf{A})$,
there exists $X_\varepsilon\in\R^{k\times d}$ such that
\[
f(X_\varepsilon;\mathbf{A})\le \mathcal{V}(\mathbf{A})+\varepsilon.
\]
Then we have
\[
\mathcal{V}(\mathbf{B})
\le
f(X_\varepsilon;\mathbf{B})=D(X_\varepsilon;\mathbf{B})+C(X_\varepsilon)\]
and
\[\mathcal{V}(\mathbf{A})\le f(X_\varepsilon;\mathbf{A})=D(X_\varepsilon;\mathbf{A})+C(X_\varepsilon)\le \mathcal{V}(\mathbf{A})+\varepsilon.\]
Subtracting, we obtain
\[\mathcal{V}(\mathbf{B})-\mathcal{V}(\mathbf{A})\le D(X_\varepsilon;\mathbf{B})-D(X_\varepsilon;\mathbf{A})+\varepsilon.\]
Meanwhile,  it follows from Lemma~\ref{lem:Lip-D} that
\[D(X_\varepsilon;\mathbf{B})-D(X_\varepsilon;\mathbf{A})\le\|F^\circ\|\sqrt{n}\,\|\mathbf{B}-\mathbf{A}\|_F,
\]
and so
\[
\mathcal{V}(\mathbf{B})-\mathcal{V}(\mathbf{A})\le
\|F^\circ\|\sqrt{n}\,\|\mathbf{B}-\mathbf{A}\|_F+\varepsilon.
\]
Since $\varepsilon>0$ is arbitrary, this yields
\[\mathcal{V}(\mathbf{B})-\mathcal{V}(\mathbf{A})\le
\|F^\circ\|\sqrt{n}\,\|\mathbf{B}-\mathbf{A}\|_F.
\]
Interchanging the roles of $\mathbf{A}$ and $\mathbf{B}$, we also obtain
\[
\mathcal{V}(\mathbf{A})-\mathcal{V}(\mathbf{B})
\le\|F^\circ\|\sqrt{n}\,\|\mathbf{A}-\mathbf{B}\|_F,\]
and hence
\[|\mathcal{V}(\mathbf{A})-\mathcal{V}(\mathbf{B})|\le
\|F^\circ\|\sqrt{n}\,\|\mathbf{A}-\mathbf{B}\|_F.\]
This proves the theorem.
\end{proof}

Our next goal is to investigate the semicontinuity properties of the global optimal solution mapping \eqref{solutionmapping}. 
 We begin by recalling the definition of upper semicontinuity for set-valued mappings, see \cite[p. 109]{Berge}.

\begin{definition}\label{def:usc-neighborhood}
Let $G: \mathbb{R}^{n \times d} \rightrightarrows \mathbb{R}^{k \times d}$ be a set-valued mapping. We say that $G$ is \emph{upper semicontinuous} at $\mathbf{\Bar{A}} \in \mathbb{R}^{n \times d}$ if for every open set $U \subset \mathbb{R}^{k \times d}$ such that $G(\mathbf{\Bar{A}}) \subset U$, there exists an open neighborhood $V$ of $\mathbf{\Bar{A}}$ such that 
\[
G(\mathbf{A}) \subset U \quad \text{for all } \mathbf{A} \in V.
\]
The mapping $G$ is said to be upper semicontinuous on $\mathbb{R}^{n \times d}$ if it is upper semicontinuous at every point $\mathbf{A} \in \mathbb{R}^{n \times d}$.
\end{definition}

\begin{theorem}
The global optimal solution mapping $S: \mathbb{R}^{n \times d} \rightrightarrows \mathbb{R}^{k \times d}$ defined in \eqref{solutionmapping} is upper semicontinuous.
\end{theorem}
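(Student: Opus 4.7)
The plan is to argue by contradiction using the standard sequential characterization of upper semicontinuity for an argmin mapping with compact values: if $S$ fails to be upper semicontinuous at some $\bar{\mathbf{A}}$, then there exist an open set $U\supset S(\bar{\mathbf{A}})$, a sequence $\mathbf{A}_m\to\bar{\mathbf{A}}$ in $\mathbb{R}^{n\times d}$, and points $X_m\in S(\mathbf{A}_m)\setminus U$. I will derive a contradiction by showing that $\{X_m\}$ admits a subsequence converging to some $X^\ast\in S(\bar{\mathbf{A}})$, which must then lie in $U$.

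First, I would record two preliminary facts that follow immediately from material already proved. Fact~1: the map $(X,\mathbf{A})\mapsto f(X;\mathbf{A})$ is jointly continuous on $\mathbb{R}^{k\times d}\times\mathbb{R}^{n\times d}$, since $D(X;\mathbf{A})$ is Lipschitz in $\mathbf{A}$ uniformly in $X$ by Lemma~\ref{lem:Lip-D} and continuous in $X$ by Lemma~\ref{lemmarho0}(d), while $C(X)$ is a continuous quadratic. Fact~2: $\mathcal{V}$ is continuous on $\mathbb{R}^{n\times d}$ by Theorem~\ref{thm:Lip-vk}, and for each $\mathbf{A}$ the set $S(\mathbf{A})$ is nonempty and compact by (the obvious extension of) Theorem~\ref{thm1}.

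The heart of the argument is a uniform coercivity step, which I expect to be the main obstacle. I would prove: for every bounded set $K\subset\mathbb{R}^{n\times d}$ and every $M\in\mathbb{R}$, the set
\[
\mathcal{L}_{K,M}=\bigl\{X\in\mathbb{R}^{k\times d}\ \big|\ \exists\,\mathbf{A}\in K,\ f(X;\mathbf{A})\le M\bigr\}
\]
is bounded. The proof mirrors the two cases of Theorem~\ref{thm1}: in Case~A the fusion term $C(X)$ alone forces $f(X;\mathbf{A})\to\infty$, independently of $\mathbf{A}$; in Case~B, picking any fixed index $i_0$ and writing $\|x_{\ell}-a_{i_0}\|\ge\|x_\ell\|-\sup_{\mathbf{A}\in K}\|a_{i_0}\|$, the data-fidelity bound of Lemma~\ref{lemmarho0}(g) gives $\sum_i\min_\ell\rho(x_\ell-a_i)\to\infty$ uniformly for $\mathbf{A}\in K$. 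Applying this with $K$ a closed ball around $\bar{\mathbf{A}}$ containing $\{\mathbf{A}_m\}$ and with $M=\sup_m\mathcal{V}(\mathbf{A}_m)+1$, which is finite by Fact~2, yields that $\{X_m\}$ is bounded.

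Extract a subsequence $X_{m_j}\to X^\ast\in\mathbb{R}^{k\times d}$. By Fact~1,
\[
f(X^\ast;\bar{\mathbf{A}})=\lim_{j\to\infty}f(X_{m_j};\mathbf{A}_{m_j})=\lim_{j\to\infty}\mathcal{V}(\mathbf{A}_{m_j})=\mathcal{V}(\bar{\mathbf{A}}),
\]
so $X^\ast\in S(\bar{\mathbf{A}})\subset U$. But $X_{m_j}\in\mathbb{R}^{k\times d}\setminus U$, and the complement of $U$ is closed, forcing $X^\ast\notin U$. This contradiction shows that $S$ is upper semicontinuous at every $\bar{\mathbf{A}}\in\mathbb{R}^{n\times d}$, which completes the proof.
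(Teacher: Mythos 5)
Your proposal is correct and follows essentially the same route as the paper: argue by contradiction from the negation of upper semicontinuity, show the offending sequence of minimizers $\{X_m\}$ is bounded, extract a convergent subsequence, and use continuity of $f$ and of $\mathcal{V}$ (Theorem~\ref{thm:Lip-vk}) to place the limit in $S(\bar{\mathbf{A}})\subset U$ while it must lie in the closed complement of $U$. The only place you diverge is the boundedness step: you propose a uniform coercivity lemma over bounded sets of data matrices, reproving the two-case argument of Theorem~\ref{thm1} uniformly in $\mathbf{A}\in K$, whereas the paper gets the same conclusion more cheaply by using Lemma~\ref{lem:Lip-D} to transfer the bound $f(X_m;\mathbf{A}_m)\le M$ to a bound on $f(X_m;\bar{\mathbf{A}})$ and then invoking coercivity of $f(\cdot;\bar{\mathbf{A}})$ at the single fixed datum $\bar{\mathbf{A}}$. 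Both mechanisms are valid (your uniform version works because Case~A is independent of $\mathbf{A}$ and Case~B only needs $\sup_{\mathbf{A}\in K}\|a_{i_0}\|<\infty$); the paper's variant simply reuses an already-established Lipschitz estimate instead of redoing the coercivity analysis.
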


\begin{proof}
Fix any $\Bar{\mathbf{A}} \in \mathbb{R}^{n \times d}$. Suppose on the contrary that $S$ is not upper semicontinuous at $\Bar{\mathbf{A}}$. Then there exists an open set $U \subset \mathbb{R}^{k \times d}$ with $S(\Bar{\mathbf{A}}) \subset U$ such that for every $m \in \mathbb{N}$, one can find $\mathbf{A}_m \in B(\Bar{\mathbf{A}}, 1/m)$ and $X_m \in S(\mathbf{A}_m)$ with
\[
X_m \notin U.
\]
Since $\mathbf{A}_m \to \Bar{\mathbf{A}}$ and the optimal value function $\mathcal{V}$ is continuous by Theorem~\ref{thm:Lip-vk}, we have
\[
f(X_m; \mathbf{A}_m) = \mathcal{V}(\mathbf{A}_m) \longrightarrow \mathcal{V}(\bar{\mathbf{A}}),
\]
so the sequence $\{f(X_m; \mathbf{A}_m)\}_m$ is bounded.
We now show that the sequence $\{X_m\}_m$ is bounded. Assume on the contrary that it is unbounded. Then there exists a subsequence (not relabelled) such that $$\|X_m\|_F \to \infty.$$ By Lemma~\ref{lem:Lip-D}, there is a constant $L > 0$ such that
\[
|f(X_m; \mathbf{A}_m) - f(X_m; \bar{\mathbf{A}})| \le L \|\mathbf{A}_m - \Bar{\mathbf{A}}\|_F \longrightarrow 0,
\]
which implies that $\{f(X_m; \Bar{\mathbf{A}})\}_m$ is also bounded. On the other hand, for the fixed data matrix $\Bar{\mathbf{A}}$, the proof in Theorem~\ref{thm1} shows that $f(\cdot; \Bar{\mathbf{A}})$ is coercive in $X$, so $\|X_m\|_F \to \infty$ forces $f(X_m; \Bar{\mathbf{A}}) \to \infty$, which is a contradiction. Therefore, $\{X_m\}_m$ must be bounded. By the Bolzano--Weierstrass theorem, there exists a subsequence (still denoted by $\{X_m\}$) and some $\Bar{X} \in \mathbb{R}^{k \times d}$ such that $X_m \to \Bar{X}$. Since each $X_m \notin U$ and the complement of $U$ is closed, we have
\begin{equation}\label{eq:usc-contradict}
\bar{X} \notin U.
\end{equation}
Because $f$ is continuous in both variables, we obtain
\[
f(X_m; \mathbf{A}_m) \longrightarrow f(\Bar{X}; \Bar{\mathbf{A}}).
\]
Recalling that $f(X_m; \mathbf{A}_m) = \mathcal{V}(\mathbf{A}_m)$ and $\mathcal{V}(\mathbf{A}_m) \to \mathcal{V}(\Bar{\mathbf{A}})$, we conclude that
\[
f(\Bar{X}; \Bar{\mathbf{A}}) = \mathcal{V}(\Bar{\mathbf{A}}),
\]
which means $\Bar{X} \in S(\Bar{\mathbf{A}}) \subset U$. This contradicts \eqref{eq:usc-contradict}. Hence, $S$ is upper semicontinuous at $\Bar{\mathbf{A}}$. Since $\Bar{\mathbf{A}}$ was arbitrary, the mapping $S$ is upper semicontinuous on $\mathbb{R}^{n \times d}$.
\end{proof}

\section{Computational Framework via Smoothing and DC Programming}\label{section4}

In this section, we develop a DC-based algorithmic framework for minimizing the nonsmooth, nonconvex objective function $f$. The approach is built on an exact DC decomposition, Nesterov-type smoothing, and efficient computation of gradients and subgradients of the DC components. 
Then we use this model to determine the optimal number of centers in multifacility location problems. 

\subsection{DC Decomposition and Nesterov Smoothing}\label{subsec:DC-smoothing}
We begin by expressing the objective function of \eqref{maxminn} in the form of a
difference-of-convex (DC) decomposition. 
Observe first that for each $i=1,\ldots,n$, the assignment term can be written as
\begin{equation}\label{DC_identity}
\min_{1\le \ell \le k} \rho(x_\ell - a_i)
=
\sum_{\ell=1}^k \rho(x_\ell - a_i)
-
\max_{r=1,\ldots,k}
\sum_{\substack{\ell=1,\;\ell\neq r}}^k \rho(x_\ell - a_i).
\end{equation}
Substituting \eqref{DC_identity} into \eqref{maxminn} yields
\begin{align*}
f(X)
&=
\sum_{i=1}^n \left[
\sum_{\ell=1}^k \rho(x_\ell - a_i)
-
\max_{r=1,\ldots,k}
\sum_{\substack{\ell=1,\;\ell\neq r}}^k \rho(x_\ell - a_i)
\right]
+
\frac{\lambda n}{2} \sum_{1\le s<t\le k} \|x_s-x_t\|^2 \\[0.1cm]
&=
\Bigg(
\sum_{i=1}^n \sum_{\ell=1}^k \rho(x_\ell - a_i)
+
\frac{\lambda n}{2} \sum_{1\le s<t\le k} \|x_s-x_t\|^2
\Bigg)
-
\Bigg(
\sum_{i=1}^n
\max_{r=1,\ldots,k}
\sum_{\substack{\ell=1,\;\ell\neq r}}^k \rho(x_\ell - a_i)
\Bigg),
\end{align*}
that is,
\[
f(X) = g(X) - h(X),
\]
with
\begin{align*}
g(X)
&=
\sum_{i=1}^n \sum_{\ell=1}^k \rho(x_\ell - a_i)
+
\frac{\lambda n}{2}
\sum_{1\le s<t\le k}\|x_s-x_t\|^2,
\\[0.05cm]
h(X)
&=
\sum_{i=1}^n
\max_{r=1,\ldots,k}
\sum_{\substack{\ell=1,\;\ell\neq r}}^k \rho(x_\ell - a_i).
\end{align*}
Clearly, both $g$ and $h$ are convex, so $f$ is a DC function.

\begin{remark}
Direct DC algorithms based on convex conjugates would require an explicit expression for the Fenchel conjugate of $\rho$, which is generally unavailable. To obtain a practical scheme, we instead smooth the gauge $\rho$ and work with a differentiable approximation while preserving the DC structure.
\end{remark}

Recall from Lemma \ref{lemmarho0}(f) that
\[
\rho(z) = \max_{y \in F^\circ} \langle z, y \rangle.
\]
Following the Nesterov smoothing technique introduced in \cite{nesterov2005smooth} and developed in \cite{Nam2017,Nam2018}, for a parameter $\mu > 0$, a smooth approximation of the Minkowski gauge function is given by
\begin{equation}\label{eq:rho-smooth-general}
\rho_\mu(z)=\max_{y \in F^\circ} \left\{
\langle z, y \rangle - \frac{\mu}{2} \|y\|^2
\right\}=\frac{1}{2\mu} \|z\|^2-\frac{\mu}{2} \left[ d\!\left( \frac{z}{\mu}; F^\circ \right) \right]^2.
\end{equation}
Next, we define a smoothed objective function by replacing the gauge function $\rho$ with its smooth approximation $\rho_\mu$ as follows:
\begin{equation}
    \label{eq:fkmu}
f_\mu(X)
=
\sum_{i=1}^n \sum_{\ell=1}^k
\rho_\mu(x_\ell - a_i)
+
\frac{\lambda n}{2}
\sum_{1\le s<t\le k}\|x_s-x_t\|^2-\sum_{i=1}^n
\max_{r=1,\ldots,k}
\sum_{\substack{\ell=1,\;\ell\neq r}}^k \rho_\mu(x_\ell - a_i).
\end{equation} 
Using \eqref{eq:rho-smooth-general}, we obtain a DC decomposition $f_\mu = g_\mu - h_\mu$ with
\begin{equation}\label{gmu}
g_\mu(X)
=
\frac{1}{2\mu}
\sum_{i=1}^n \sum_{\ell=1}^k
\|x_\ell - a_i\|^2
+
\frac{\lambda n}{2}
\sum_{1\le s<t\le k}\|x_s-x_t\|^2    
\end{equation}
and
\begin{equation}\label{hmu}
 h_\mu(X)
=
\frac{\mu}{2}
\sum_{i=1}^n \sum_{\ell=1}^k
\left[ d\!\left( \frac{x_\ell - a_i}{\mu}; F^\circ \right) \right]^2
+
\sum_{i=1}^n
\max_{r=1,\dots,k}
\sum_{\substack{\ell=1,\; \ell \neq r}}^k \rho_\mu(x_\ell - a_i).   
\end{equation}
Here, both $g_\mu$ and $h_\mu$ are convex functions. Moreover, since $\rho_\mu(z) \to \rho(z)$ as $\mu \downarrow 0$, it follows that $f_\mu(X) \to f(X)$ pointwise for any $X \in \mathbb{R}^{k \times d}$.

\subsection{Subgradient of the Concave Part and DCA Step} \label{compY}

A crucial step in each iteration of the DCA and BDCA schemes is the computation of a subgradient
$Y(X) \in \partial h_{\mu}(X)$. For computational convenience, we decompose
$h_\mu$ as $h_\mu = h^{(1)}_{\mu} + h^{(2)}$, where
\[
h^{(1)}_{\mu}(X)
=
\frac{\mu}{2} \sum_{i=1}^n \sum_{\ell=1}^k
\left[
d\!\left(\frac{x_\ell-a_i}{\mu};F^\circ\right)
\right]^2
\quad\text{and}\quad
h^{(2)}(X)
=
\sum_{i=1}^n
\max_{r=1,\dots,k}
\sum_{\substack{\ell=1\\ \ell\neq r}}^k
\rho(x_\ell-a_i).
\]

\paragraph{Gradient of $h^{(1)}_{\mu}$.}
The function $h^{(1)}_{\mu}$ is continuously differentiable
(see, e.g., \cite[Proposition~3.1]{Nam2017}), and its gradient with respect to
$x_p$ is given by
\[
\nabla_{x_p} h^{(1)}_{\mu}(X)
=
\sum_{i=1}^n
\left(
\frac{x_p-a_i}{\mu}
-
P\!\left(\frac{x_p-a_i}{\mu};F^\circ\right)
\right).
\]

\paragraph{Subgradients of $h^{(2)}$.}
For each $i=1,\ldots,n$, define
\[
Q_{i,r}(X)
=
\sum_{\ell=1,\ \ell\neq r}^{k}
\rho(x_\ell-a_i),
\quad\text{and}\quad
Q_i(X)
=
\max_{r=1,\ldots,k} Q_{i,r}(X).
\]
By construction,
\[
h^{(2)}(X)=\sum_{i=1}^n Q_i(X).
\]
Since $h^{(2)}$ is a finite sum of convex functions, its subdifferential satisfies
\[
\partial h^{(2)}(X)=\sum_{i=1}^n \partial Q_i(X).
\]
Therefore, it suffices to compute a subgradient
$V_i\in\partial Q_i(X)$ for each $i=1,\ldots,n$.

Fix $i\in\{1,\ldots,n\}$. To compute $V_i$, we proceed as follows:
\begin{itemize}
\item[$\circ$]
Choose any active index
\[
r_i \in \arg\max_{r=1,\ldots,k} Q_{i,r}(X).
\]
\item[$\circ$]
Define
\[
V_i=(v_{i1},\ldots,v_{ik})
\]
with components given by
\begin{itemize}
\item $v_{i r_i}=0$, since $x_{r_i}$ does not appear in $Q_{i,r_i}(X)$;
\item for each $\ell\neq r_i$,
\[
v_{i\ell} \in \partial\rho (x_\ell-a_i).
\]
\end{itemize}
\end{itemize}
Then $V_i\in\partial Q_i(X)$, and consequently
\[
\sum_{i=1}^n V_i \in \partial h^{(2)}(X).
\]
A subgradient $Y(X) \in \partial h_\mu(X)$ is obtained by summing the subgradients of these two components: $$Y = Y^1 + Y^2,$$ where $Y^1 \in \partial h^{1}_\mu(X)$ and $Y^2 \in \partial h^{(2)}(X)$.

\begin{comment}
Combining the above expressions yields the following procedure for computing
$Y(X)\in\partial h_\mu(X)$.

\begin{algorithm}[H]
\caption{Computation of a subgradient $Y(X)\in\partial h_\mu(X)$}
\label{alg:subgradH}
\begin{algorithmic}[1]
\Require Prototypes $X=(x_1,\ldots,x_k)$, data $\mathbf{A}=(a_1,\ldots,a_n)$, parameter $\mu>0$
\Ensure A subgradient $Y=(y_1,\ldots,y_k)\in\partial h_\mu(X)$

\State Initialize $y_\ell \gets 0$ for all $\ell=1,\ldots,k$

\Comment{Contribution of the smooth part $h^{(1)}_{\mu}$}
\For{$\ell = 1,\ldots,k$}
    \For{$i = 1,\ldots,n$}
        \State $z_{i\ell} \gets \dfrac{x_\ell-a_i}{\mu}$
        \State $y_\ell \gets y_\ell +
        \Big(z_{i\ell}-P(z_{i\ell};F^\circ)\Big)$
    \EndFor
\EndFor

\Comment{Contribution of the nonsmooth part $h^{(2)}$}
\For{$i = 1,\ldots,n$}
    \State Compute $Q_{i,r}(X)=\sum_{\ell=1,\ \ell\neq r}^{k}\rho(x_\ell-a_i)$ for all $r=1,\ldots,k$
    \State Choose $r_i \in \arg\max_{r=1,\ldots,k} Q_{i,r}(X)$
    \For{$\ell = 1,\ldots,k$ with $\ell\neq r_i$}
        \State Choose $v_{i\ell}\in\partial\rho(x_\ell-a_i)$
        \State $y_\ell \gets y_\ell + v_{i\ell}$
    \EndFor
\EndFor

\State \Return $Y=(y_1,\ldots,y_k)$
\end{algorithmic}
\end{algorithm}    
\end{comment}

\subsection{DCA Iterations and Solution of the Subproblem}

For a fixed smoothing parameter $\mu > 0$, we consider the smoothed DC decomposition $$f_\mu(X) = g_\mu(X) - h_\mu(X).$$ Given the current iterate $X^{(t)}$, following the standard DCA framework \cite{LeThi2018,LeThi2024,LeThi1996}, the update is defined by
\begin{equation}\label{eq:dca-step}
Y^{(t)} \in \partial h_\mu(X^{(t)}), \qquad
X^{(t+1)} \in \arg\min_{X \in \mathbb{R}^{k \times d}} \{ g_\mu(X) - \langle Y^{(t)}, X \rangle \}.
\end{equation}
Since $g_\mu$ is strongly convex in our formulation, the minimizer $X^{(t+1)}$ is unique and can be characterized by the first-order optimality condition 
\begin{equation}\label{condition}
\nabla g_\mu(X^{(t+1)}) = Y^{(t)}.    
\end{equation}

\paragraph{The gradient of  $g_\mu$.}
In our specific case with the squared Euclidean fusion penalty, the convex component is given by
\[
g_\mu(X) = \frac{1}{2\mu} \sum_{i=1}^n \sum_{\ell=1}^k \|x_\ell - a_i\|^2
+ \frac{\lambda n}{2} \sum_{1 \le s < t \le k} \|x_s - x_t\|^2.
\]
Differentiating $g_\mu$ with respect to each prototype $x_p$ yields
\begin{equation}\label{eq:grad_gmu_final}
\nabla_{x_p} g_\mu(X)
=
\frac{n}{\mu}(x_p - \bar{\mathbf{A}})
+
\lambda n \sum_{j \neq p} (x_p - x_j),
\end{equation}
where $\bar{\mathbf{A}} = \frac{1}{n} \sum_{i=1}^n a_i$ is the centroid of the data points.

\paragraph{DCA update via linear systems.}
Substituting \eqref{eq:grad_gmu_final} into the optimality condition \eqref{condition}, the DCA subproblem reduces to solving a coupled linear system for $p=1,\dots,k$:
\begin{equation}\label{eq:linear-system}
\left( \frac{1}{\mu} + \lambda(k-1) \right) n\, x_p^{(t+1)}
-
\lambda n \sum_{j \neq p} x_j^{(t+1)}
=
y_p^{(t)} + \frac{n}{\mu} \bar{\mathbf{A}}.
\end{equation}
Set $$B_p = y_p^{(t)} + \frac{n}{\mu} \bar{\mathbf{A}}\quad\text{and}\quad\sigma^{(t+1)} = \sum_{j=1}^k x_j^{(t+1)}.$$ 
Summing \eqref{eq:linear-system} over all $p$ yields $\sigma^{(t+1)} = \frac{\mu}{n} \sum_{p=1}^k B_p$. Consequently, each prototype can be updated by
\begin{equation} \label{eq:closed-form-update}
x_p^{(t+1)} = \frac{B_p + \lambda n \sigma^{(t+1)}}{n(\mu^{-1} + \lambda k)}, \quad p=1,\dots,k.
\end{equation}
This formulation avoids the need for matrix inversion or iterative solvers within the DCA loop, reducing the update cost to $\mathcal{O}(kd)$.

\section{Computational Framework via DCA-Type Algorithms} \label{section5}

We now summarize the resulting DCA-type schemes for minimizing the smoothed objective $f_\mu = g_\mu - h_\mu$. At each iteration, a subgradient $Y^{(t)} \in \partial h_\mu(X^{(t)})$ is computed via Subsection~\ref{compY}. The next iterate $X^{(t+1)}$ is then obtained by solving the linear system $\nabla g_\mu(X^{(t+1)}) = Y^{(t)}$, which admits a unique solution due to the strong convexity of $g_\mu$.

\subsection{Basic DCA Scheme}

The basic DCA leverages the linear structure of $\nabla g_\mu$ to perform fast updates.

\begin{algorithm}[htbp]
\caption{DCA for the smoothed Minkowski clustering model}
\label{alg:DCA}
\begin{algorithmic}[1]
\Require Data $\mathbf{A}=(a_1,\dots,a_n)$; polar set $F^\circ$; parameters $\lambda>0$, $\mu>0$; tolerance $\varepsilon>0$.
\Ensure Prototype matrix $X^\star \in \mathbb{R}^{k \times d}$.
\State Choose an initial $X^{(0)} \in \mathbb{R}^{k \times d}$.
\State Precompute $\bar{\mathbf{A}} = \frac{1}{n} \sum_{i=1}^n a_i$.
\For{$t = 0,1,2,\dots$}
    \State Compute $Y^{(t)} \in \partial h_\mu(X^{(t)})$ via Subsection~\ref{compY}.
    \State Set $B_p = y_p^{(t)} + \frac{n}{\mu} \bar{\mathbf{A}}$ for $p=1,\dots,k$.
    \State Compute total sum $\sigma^{(t+1)} = \frac{\mu}{n} \sum_{p=1}^k B_p$.
    \State Update each prototype: 
    \[ x_p^{(t+1)} = \frac{B_p + \lambda n \sigma^{(t+1)}}{n(\mu^{-1} + \lambda k)}, \quad p=1,\dots,k. \]
    \If{$\|X^{(t+1)} - X^{(t)}\|_F < \varepsilon$}
        \State \Return $X^\star = X^{(t+1)}$
    \EndIf
\EndFor
\end{algorithmic}
\end{algorithm}

\subsection{Boosted and Inertial Variants}

To exploit the smooth structure of $g_\mu$ more effectively and accelerate convergence, we consider two standard variants: Boosted DCA (BDCA) and Multi-step Inertial DCA (M-IDCA).

\paragraph{Boosted DCA.}
BDCA incorporates a line search along the direction $d_t = Z^{(t)} - X^{(t)}$, where $Z^{(t)}$ is the standard DCA point. This ensures a larger decrease in the objective function $f_\mu$ per iteration.

\begin{algorithm}[H]
\caption{BDCA for the smoothed Minkowski clustering model}
\label{alg:BDCA}
\begin{algorithmic}[1]
\Require Data $\mathbf{A}$; polar set $F^\circ$; parameters $\lambda>0$, $\mu>0$; line-search parameters $\alpha>0, \beta \in (0,1)$; tolerance $\varepsilon>0$.
\Ensure Prototype matrix $X^\star$.
\State Initialize $X^{(0)} \in \mathbb{R}^{k \times d}$.
\For{$t = 0,1,2,\dots$}
    \State Compute $Y^{(t)} \in \partial h_\mu(X^{(t)})$ as in Subsection~\ref{compY}.
    \State Compute $Z^{(t)}$ by solving the linear system $\nabla g_\mu(Z^{(t)}) = Y^{(t)}$.
    \State Set $d_t := Z^{(t)} - X^{(t)}$.
    \If{$\|d_t\|_F < \varepsilon$} \State \Return $X^\star = X^{(t)}$ \EndIf
    \State Set $\gamma_t \gets 1$. \Comment{Backtracking line search}
    \While{$f_\mu(X^{(t)} + \gamma_t d_t) > f_\mu(X^{(t)}) - \alpha \gamma_t^2 \|d_t\|_F^2$}
        \State $\gamma_t \gets \beta \gamma_t$
    \EndWhile
    \State $X^{(t+1)} \gets X^{(t)} + \gamma_t d_t$.
    \If{$\|X^{(t+1)} - X^{(t)}\|_F < \varepsilon$} \State \Return $X^\star = X^{(t+1)}$ \EndIf
\EndFor
\end{algorithmic}
\end{algorithm}

\paragraph{Multi-step inertial DCA (M-IDCA).}
The inertial variant incorporates momentum from the last $m$ iterates to speed up the exploration of the search space, while the line search maintains the monotonic descent property.

\begin{algorithm}[H]
\caption{Multi-step inertial DCA (M-IDCA)}
\label{alg:BM-M-IDCA}
\begin{algorithmic}[1]
\Require Data $\mathbf{A}$; polar set $F^\circ$; parameters $\lambda, \mu, m$; inertial coefficients $\alpha_i$; line-search parameters $\alpha, \beta$; tolerance $\varepsilon$.
\Ensure Prototype matrix $X^\star$.
\State Initialize $X^{(0)} \in \mathbb{R}^{k \times d}$.
\For{$t = 0,1,2,\dots$}
    \State Compute $Y^{(t)} \in \partial h_\mu(X^{(t)})$ as in Subsection~\ref{compY}.
    \State Compute $W^{(t)}$ by solving $\nabla g_\mu(W^{(t)}) = Y^{(t)}$. \Comment{DCA point}
    \State Set $Z^{(t)} = W^{(t)} + \sum_{i=1}^{\min(t,m)} \alpha_i ( X^{(t+1-i)} - X^{(t-i)} )$. \Comment{Extrapolation}
    \State Set $d_t := Z^{(t)} - X^{(t)}$.
    \State (Perform line search as in BDCA to update $X^{(t+1)}$).
\EndFor
\end{algorithmic}
\end{algorithm}

\subsection{Automatic Cluster Selection: LDCA-K}

A distinctive feature of our Laplacian model is its ability to eliminate redundant clusters. We combine the DCA updates with a prototype deletion mechanism. Prototypes that do not capture any data points are removed, effectively estimating the optimal number of clusters $k$ automatically.

\begin{algorithm}[H]
\caption{Laplacian DCA with prototype deletion (LDCA-K)}
\label{alg:LDCA-K}
\begin{algorithmic}[1]
\Require Data $\mathbf{A}$; initial $k_0$; parameters $\lambda, \mu$.
\State Initialize $k \gets k_0$ and $X \in \mathbb{R}^{k \times d}$ via $k$-means++.
\Repeat
    \Repeat
        \State Compute $Y \in \partial h_\mu(X)$ as in Subsection~\ref{compY}.
        \State Update $X$ by solving $\nabla g_\mu(X) = Y$ (via DCA, BDCA or M-IDCA).
    \Until{inner convergence}
    \State Assign each $a_i$ to its nearest prototype: $\ell_i \in \arg\min_\ell \rho(x_\ell - a_i)$.
    \State Delete prototypes $x_\ell$ for which $\{i \mid \ell_i = \ell\} = \emptyset$ and update $k$.
\Until{no further deletions occur or max iterations reached}
\end{algorithmic}
\end{algorithm}
%%%%%%%%%%%%%%%%%%%%%%%%%%%%%%%%%%%%%
%%%%%%%%%%%%%%%%%%%%%%%%%%%%%%%%%%%%%
%%%%%%%%%%%%%%%%%%%%%%%%%%%%%%%%%%%
\section{Convergence Analysis} \label{section6}

In this section, we establish the fundamental convergence properties of the DCA-type schemes for the smoothed Minkowski clustering model. Throughout the analysis, we fix the parameters $\lambda > 0$ and $\mu > 0$, and consider the DC decomposition $f_\mu = g_\mu - h_\mu$ derived in Section~\ref{section4}.

\begin{definition}
    Let $\Omega \subseteq \mathbb{R}^d$ be a convex set. A function $g: \Omega \to \mathbb{R}$ is said to be strongly convex with parameter $\gamma > 0$ if for all $x, y \in \Omega$ and any $\alpha \in [0, 1]$, the following inequality holds:
\begin{equation*}
g(\alpha x + (1-\alpha)y) \le \alpha g(x) + (1-\alpha)g(y) - \frac{\gamma}{2}\alpha(1-\alpha)|x-y|^2.
\end{equation*}
\end{definition}

\begin{remark}\label{stronglyconvex}
As stated in \cite[p. 459]{boyd2004convex} or in \cite[Definition 2.1.2 and Theorem 2.1.11]{nesterov2018lectures}, we have
\begin{enumerate}
    \item[{\rm(a)}] a continuously differentiable function $g$ is strongly convex with parameter $\gamma > 0$ if it satisfies the following quadratic lower bound:
\begin{equation*}
g(y) \ge g(x) + \langle \nabla g(x), y - x \rangle + \frac{\gamma}{2} \|y - x\|^2\quad \text{for all }x, y \in \Omega
\end{equation*}
 \item[{\rm(b)}] if $g$ is twice continuously differentiable, then $g$
is strongly convex with parameter $\gamma > 0$ 
 if and only if its Hessian matrix $\nabla^2 g(x)$ satisfies
\begin{equation*}
\nabla^2 g(x) \succeq \gamma I_d, \quad \forall x \in \Omega,
\end{equation*}
where $I_d$ is the identity matrix and $\succeq$ denotes the Loewner partial order (i.e., $\nabla^2 g(x) - \gamma I_d$ is positive semidefinite). Since the Hessian matrix is symmetric, this condition is equivalent to requiring that all eigenvalues of  $\nabla^2 g(x)$ are bounded below by $\gamma$.
 \end{enumerate}
\end{remark}

In the following theorem, we establish the fundamental analytical properties of the function $g_\mu$. 

\begin{theorem} \label{thm:smoothness-special}
The function $g_\mu\colon \R^{k\times d}\to \R$ defined by \eqref{gmu} is twice continuously differentiable on $\mathbb{R}^{k\times d}$ and possesses the following properties:
\begin{enumerate}
    \item[{\rm(a)}] The Hessian matrix of $g_\mu$ is given by
    \begin{equation}\label{eq:full-hessian}
    \nabla^2 g_\mu(X) = \frac{n}{\mu} I_{kd} + \lambda n (L_G \otimes I_d), \quad \forall X \in \mathbb{R}^{k \times d}.
    \end{equation}
    
    \item[{\rm(b)}] $g_\mu$ is strongly convex with parameter $\gamma = \frac{n}{\mu}$. That is, for all $X, X' \in \mathbb{R}^{k\times d}$, one has
    \begin{equation*}
    g_\mu(X') \ge g_\mu(X) + \langle \nabla g_\mu(X), X'-X \rangle + \frac{n}{2\mu}\|X'-X\|_F^2.
    \end{equation*}
    
    \item[{\rm(c)}] The gradient $\nabla g_\mu$ is Lipschitz continuous with constant $L = \frac{n}{\mu} + \lambda nk$. That is, for all $X, X' \in \mathbb{R}^{k \times d}$, one has
    \begin{equation*}
    \|\nabla g_\mu(X') - \nabla g_\mu(X)\|_F \le \left( \frac{n}{\mu} + \lambda n k \right) \|X' - X\|_F.
    \end{equation*}
\end{enumerate}
\end{theorem}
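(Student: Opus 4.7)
The plan is to reduce everything to a direct computation of the Hessian of $g_\mu$, since $g_\mu$ is manifestly a sum of finitely many convex quadratics and hence is of class $C^2$ on the whole space $\mathbb{R}^{k\times d}$. Once the Hessian is in closed form, parts (b) and (c) follow from spectral considerations via Remark~\ref{stronglyconvex} together with the fact that the Lipschitz constant of the gradient of a $C^2$ convex function coincides with the spectral norm of its Hessian.

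For part (a), I would compute the block Hessian one block at a time, viewing $X$ as the tuple $(x_1,\dots,x_k)\in(\mathbb{R}^d)^k$. The fidelity term $\tfrac{1}{2\mu}\sum_{i,\ell}\|x_\ell-a_i\|^2$ decouples across $\ell$ and contributes exactly $\tfrac{n}{\mu}I_d$ to each diagonal block $\partial^2_{x_p x_p}$ and zero to every off-diagonal block, giving the summand $\tfrac{n}{\mu}I_{kd}$. The fusion term $\tfrac{\lambda n}{2}\sum_{s<t}\|x_s-x_t\|^2$ contributes $(k-1)\lambda n\,I_d$ to each diagonal block and $-\lambda n\,I_d$ to each off-diagonal block. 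I would then recognize that the $k\times k$ scalar pattern $(k-1)$ on the diagonal and $-1$ off the diagonal is precisely the graph Laplacian $L_G$ of the complete graph $K_k$, so the fusion contribution is $\lambda n(L_G\otimes I_d)$. Assembling both pieces yields \eqref{eq:full-hessian}.

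For part (b), I would invoke the well-known fact that any graph Laplacian is positive semidefinite, so $L_G\otimes I_d\succeq 0$ and consequently $\nabla^2 g_\mu(X)\succeq \tfrac{n}{\mu}I_{kd}$ for every $X$. Remark~\ref{stronglyconvex}(b) then gives strong convexity with parameter $\tfrac{n}{\mu}$, and Remark~\ref{stronglyconvex}(a) translates this into the quadratic lower bound stated in the theorem. For part (c), the same spectral description gives the upper bound: the spectrum of the Laplacian of $K_k$ consists of $0$ (simple, with eigenvector $\mathbf{1}_k$) and $k$ (with multiplicity $k-1$), so the eigenvalues of $L_G\otimes I_d$ lie in $\{0,k\}$ and those of $\nabla^2 g_\mu(X)$ lie in $\{\tfrac{n}{\mu},\tfrac{n}{\mu}+\lambda n k\}$. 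Hence the spectral norm of $\nabla^2 g_\mu$ is bounded by $\tfrac{n}{\mu}+\lambda n k$ uniformly in $X$, and since $\nabla g_\mu$ is the gradient of a $C^2$ function whose Hessian has operator norm bounded by this constant, $\nabla g_\mu$ is Lipschitz with that constant, by applying the mean value theorem along the segment between $X$ and $X'$.

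The only genuinely nontrivial step is recognizing the Kronecker-product structure in (a); the rest is routine spectral analysis. If one prefers to avoid the Kronecker formalism, the same conclusions can be obtained by testing $\nabla^2 g_\mu(X)$ against an arbitrary matrix $H=(h_1,\dots,h_k)\in\mathbb{R}^{k\times d}$ and computing $\langle H,\nabla^2 g_\mu(X)H\rangle=\tfrac{n}{\mu}\|H\|_F^2+\lambda n\sum_{s<t}\|h_s-h_t\|^2$, from which both the lower bound $\tfrac{n}{\mu}\|H\|_F^2$ and the upper bound $(\tfrac{n}{\mu}+\lambda nk)\|H\|_F^2$ (using $\|h_s-h_t\|^2\le 2\|h_s\|^2+2\|h_t\|^2$ and summing) drop out immediately, yielding (b) and (c) without appealing to the spectrum of $L_G$.
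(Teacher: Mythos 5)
Your main argument is correct and follows essentially the same route as the paper: compute the block Hessian of the two quadratic pieces, identify the fusion contribution as $\lambda n(L_G\otimes I_d)$ with $L_G$ the Laplacian of $K_k$, and read off the strong convexity modulus and the gradient Lipschitz constant from the spectrum $\{0,k,\dots,k\}$ of $L_G$ (the paper reaches the same Hessian via the trace identity $\sum_{s<t}\|x_s-x_t\|^2=\mathrm{tr}(X^\top L_G X)$, which is only a presentational difference). One small caution about your ``Kronecker-free'' alternative at the end: the estimate $\|h_s-h_t\|^2\le 2\|h_s\|^2+2\|h_t\|^2$ summed over pairs gives $\sum_{s<t}\|h_s-h_t\|^2\le 2(k-1)\|H\|_F^2$, which yields the Lipschitz constant $\tfrac{n}{\mu}+2\lambda n(k-1)$, strictly larger than the stated $\tfrac{n}{\mu}+\lambda nk$ when $k>2$; to recover the sharp constant along that route you should instead use the identity $\sum_{s<t}\|h_s-h_t\|^2=k\sum_{s}\|h_s\|^2-\bigl\|\sum_{s}h_s\bigr\|^2\le k\|H\|_F^2$.
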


\begin{proof}
It follows from \eqref{gmu} that $g_\mu$ is decomposed as
\[
g_\mu(X) = g_\mu^{(1)}(X) + g_\mu^{(2)}(X),
\]
where
\begin{equation*}
g_\mu^{(1)}(X) = \frac{1}{2\mu} \sum_{\ell=1}^k \sum_{i=1}^n \|x_\ell - a_i\|^2
\quad\text{and}\quad 
g_\mu^{(2)}(X) = \frac{\lambda n}{2} \sum_{1 \le s < t \le k} \|x_s - x_t\|^2.
\end{equation*}
Since both $g_\mu^{(1)}$ and $g_\mu^{(2)}$ are quadratic forms in $X$,  $g_\mu$ is twice continuously differentiable on $\mathbb{R}^{k \times d}$.

\noindent(a) We prove this statement in three steps.

\medskip
\noindent\textbf{Step 1: The Hessian matrix of $g_\mu^{(1)}$.}

Write the prototype matrix as
\[
X =
\begin{bmatrix}
x_1^\top  \\
 x_2^\top  \\
 \vdots  \\
 x_k^\top 
\end{bmatrix}
\in \R^{k\times d},
\]
so that $X$ can be viewed as a vector in $\R^{kd}$. Moreover, we have
\[
g_\mu^{(1)}(X)
= \frac{1}{2\mu} \sum_{\ell=1}^k \sum_{i=1}^n \|x_\ell - a_i\|^2
= \frac{1}{2\mu} \sum_{\ell=1}^k \sum_{i=1}^n \big(\|x_\ell\|^2 - 2\langle x_\ell, a_i\rangle + \|a_i\|^2\big).
\]
Note that
\[
\frac{1}{2\mu}\sum_{\ell=1}^k \sum_{i=1}^n \|x_\ell\|^2 = \frac{n}{2\mu}\sum_{\ell=1}^k \|x_\ell\|^2.
\]
Thus, for each $\ell = 1,\dots,k$, the gradient and Hessian with respect to $x_\ell$ are
\[
\nabla_{x_\ell} g_\mu^{(1)}(X) = \frac{n}{\mu} x_\ell - \frac{1}{\mu} \sum_{i=1}^n a_i
\quad\text{and}\quad
\nabla_{x_\ell}^2 g_\mu^{(1)}(X) = \frac{n}{\mu} I_d,
\]
where $I_d$ is the identity matrix. 
Moreover, 
\[
\nabla_{x_\ell x_j}^2 g_\mu^{(1)}(X) = 0 \quad \text{for } \ell \neq j.
\]
Therefore, the full Hessian of $g_\mu^{(1)}$ in block-matrix form is
\begin{equation}\label{hessian1}
\nabla^2 g_\mu^{(1)}(X)
=
\begin{bmatrix}
\frac{n}{\mu} I_d & \mathbf{0} & \dots & \mathbf{0} \\
\mathbf{0} & \frac{n}{\mu} I_d & \dots & \mathbf{0} \\
\vdots & \vdots & \ddots & \vdots \\
\mathbf{0} & \mathbf{0} & \dots & \frac{n}{\mu} I_d
\end{bmatrix}
= \frac{n}{\mu} I_{kd}.    
\end{equation}
All eigenvalues of this block diagonal matrix are equal to $\frac{n}{\mu}$, so $g_\mu^{(1)}$ is strongly convex with parameter $\frac{n}{\mu}$.

\medskip
\noindent\textbf{Step 2: The Hessian matrix of $g_\mu^{(2)}$.}

We now rewrite the fusion term $g_\mu^{(2)}$ in matrix form. Recall the Laplacian matrix of the complete graph $K_k$:
\[
L_G = k I_k - \mathbf{1}\mathbf{1}^\top
=
\begin{bmatrix}
k-1 & -1 & \cdots & -1 \\
-1 & k-1 & \cdots & -1 \\
\vdots & \vdots & \ddots & \vdots \\
-1 & -1 & \cdots & k-1
\end{bmatrix}
\in \R^{k\times k},
\]
where $\mathbf{1} \in \R^k$ is the vector of ones.
Next, we show that
\begin{equation}\label{eq:fusion-trace}
\sum_{1\le s<t\le k} \|x_s - x_t\|^2 = \mathrm{tr}(X^\top L_G X).
\end{equation}
First, we compute $L_G X$ explicitly. Using the block structure,
\[
L_G X
=
\begin{bmatrix}
k-1 & -1 & \cdots & -1 \\
-1 & k-1 & \cdots & -1 \\
\vdots & \vdots & \ddots & \vdots \\
-1 & -1 & \cdots & k-1
\end{bmatrix}
\begin{bmatrix}
x_1^\top  \\
 x_2^\top  \\
\vdots  \\
 x_k^\top 
\end{bmatrix}
=
\begin{bmatrix}
 y_1^\top  \\
 y_2^\top  \\
\vdots \\
 y_k^\top 
\end{bmatrix},
\]
where each row $y_s^\top$ is given by
\[
y_s = (k-1)x_s - \sum_{t\neq s} x_t, \quad s=1,\dots,k.
\]
Then we obtain
$X^\top L_G X \in \R^{d\times d}$
and
\begin{align*}
\mathrm{tr}(X^\top L_G X)
= \sum_{s=1}^k x_s^\top y_s
&= \sum_{s=1}^k x_s^\top\left((k-1)x_s - \sum_{t\neq s} x_t\right)\\ 
&= (k-1)\sum_{s=1}^k \|x_s\|^2 - \sum_{s=1}^k \sum_{t\neq s} \langle x_s, x_t\rangle.
\end{align*}

On the other hand, by expanding the sum of pairwise squared distances, we get 
\begin{align*}
\sum_{1\le s<t\le k} \|x_s - x_t\|^2
&= \sum_{1\le s<t\le k} \big(\|x_s\|^2 + \|x_t\|^2 - 2\langle x_s, x_t\rangle\big) \\
&= \sum_{1\le s<t\le k} \|x_s\|^2 + \sum_{1\le s<t\le k} \|x_t\|^2 - 2\sum_{1\le s<t\le k} \langle x_s, x_t\rangle.
\end{align*}
Since each $\|x_s\|^2$ appears exactly $k-1$ times in the first two sums, we have
\[
\sum_{1\le s<t\le k} \|x_s\|^2 + \sum_{1\le s<t\le k} \|x_t\|^2
= (k-1)\sum_{s=1}^k\|x_s\|^2.
\]
Moreover,
\[
2\sum_{1\le s<t\le k} \langle x_s, x_t\rangle
= \sum_{s\neq t} \langle x_s, x_t\rangle.
\]
Therefore,
\[
\sum_{1\le s<t\le k} \|x_s - x_t\|^2
= (k-1)\sum_{s=1}^k\|x_s\|^2 - \sum_{s\neq t}\langle x_s, x_t\rangle.
\]
So we obtain
\[
\sum_{1\le s<t\le k} \|x_s - x_t\|^2 = \mathrm{tr}(X^\top L_G X),
\]
which proves \eqref{eq:fusion-trace}, and thus
\[
g_\mu^{(2)}(X) = \frac{\lambda n}{2} \sum_{1\le s<t\le k} \|x_s - x_t\|^2
= \frac{\lambda n}{2} \mathrm{tr}(X^\top L_G X).
\]
Now, the Hessian of $g_\mu^{(2)}$ can be written in block form as
\begin{equation}\label{hessian2}
\nabla^2 g_\mu^{(2)}(X)
= n\lambda \,(L_G \otimes I_d)
= n\lambda
\begin{bmatrix}
(k-1)I_d & -I_d & \dots & -I_d \\
-I_d & (k-1)I_d & \dots & -I_d \\
\vdots & \vdots & \ddots & \vdots \\
-I_d & -I_d & \dots & (k-1)I_d
\end{bmatrix}.
\end{equation}
It is well known that for the complete graph $K_k$, the Laplacian $L_G$ is positive semidefinite with eigenvalues
\[
\{0, k, k, \dots, k\}.
\]
The eigenvalues of the Kronecker product $L_G \otimes I_d$ are all eigenvalues of $L_G$ repeated $d$ times. Hence, the eigenvalues of $\nabla^2 g_\mu^{(2)}(X) = n\lambda (L_G \otimes I_d)$ are
\[
\{0, n\lambda k, n\lambda k, \dots, n\lambda k\},
\]
so $\nabla^2 g_\mu^{(2)}(X)$ is positive semidefinite.

\medskip
\noindent\textbf{Step 3: The full Hessian of $g_\mu$.}
Combining the two Hessian matrices \eqref{hessian1} and \eqref{hessian2}, we obtain
\begin{equation}\label{gmustrong}
\nabla^2 g_\mu(X)
= \nabla^2 g_\mu^{(1)}(X) + \nabla^2 g_\mu^{(2)}(X)
= \frac{n}{\mu} I_{kd} + n\lambda (L_G \otimes I_d),    
\end{equation}
which proves statement (a).

\noindent (b) It follows from \eqref{gmustrong} that
$$\nabla^2 g_\mu(X)
= \frac{n}{\mu} I_{kd} + n\lambda (L_G \otimes I_d)
\succeq \frac{n}{\mu} I_{kd}.$$
By Remark \ref{stronglyconvex}(a),  $g_\mu$ is $\gamma$-strongly convex with $\gamma = \frac{n}{\mu}$ in the sense of the Loewner ordering. 
Moreover, it follows from Remark \ref{stronglyconvex}(b) that for all $X, X' \in \R^{k\times d}$, 
\[
g_\mu(X') \ge g_\mu(X) + \langle \nabla g_\mu(X), X'-X \rangle + \frac{n}{2\mu}\|X'-X\|_F^2,
\]
which is exactly statement (b).

\medskip
\noindent (c) The gradient $\nabla g_\mu$ is an affine mapping (specifically, it is linear in its quadratic components). In the space of matrices $\mathbb{R}^{k \times d}$ equipped with the Frobenius norm, the Jacobian of the gradient corresponds to the constant Hessian matrix $\nabla^2 g_\mu$. It is well-known that the Lipschitz constant $L$ of the gradient $\nabla g_\mu$ is given by the spectral norm of its Hessian matrix (see, e.g., \cite{boyd2004convex}), namely, 
$$L = \|\nabla^2 g_\mu\|_2 = \lambda_{\max}(\nabla^2 g_\mu),$$
where $\lambda_{\max}(\nabla^2 g_\mu)$ denotes the largest eigenvalue of the Hessian.
Recall from Statement (a) that the Hessian possesses a structured block form:
\begin{equation}\label{eq:hessian-structure}
\nabla^2 g_\mu = \frac{n}{\mu} I_{kd} + n\lambda (L_G \otimes I_d).
\end{equation}
By substituting the explicit form of the complete graph Laplacian $L_G = k I_k - \mathbf{1}\mathbf{1}^\top$, the second term in \eqref{eq:hessian-structure} reveals the following block-matrix structure:
\begin{equation*}
n\lambda (L_G \otimes I_d) = n\lambda 
\begin{bmatrix}
(k-1)I_d & -I_d & \dots & -I_d \\
-I_d & (k-1)I_d & \dots & -I_d \\
\vdots & \vdots & \ddots & \vdots \\
-I_d & -I_d & \dots & (k-1)I_d
\end{bmatrix} \in \mathbb{R}^{kd \times kd}.
\end{equation*}
To compute $\lambda_{\max}(\nabla^2 g_\mu)$, we utilize the properties of the Kronecker product. The eigenvalues of $L_G \otimes I_d$ are exactly the eigenvalues of $L_G$, each repeated $d$ times. Since $L_G$ is the Laplacian of a complete graph $K_k$, its spectrum is known to be $\sigma(L_G) = \{0, k, k, \dots, k\}$. Consequently, the largest eigenvalue of the Hessian is
\begin{align*}
\lambda_{\max}(\nabla^2 g_\mu) &= \lambda_{\max} \left( \frac{n}{\mu} I_{kd} + n\lambda (L_G \otimes I_d) \right) \\
&= \frac{n}{\mu} + n\lambda \cdot \lambda_{\max}(L_G) \\
&= \frac{n}{\mu} + \lambda n k.
\end{align*}
Thus, we obtain $L = \frac{n}{\mu} + \lambda n k$ and for all $X, X' \in \mathbb{R}^{k \times d}$, the following Lipschitz inequality holds
\begin{equation*}
\|\nabla g_\mu(X') - \nabla g_\mu(X)\|_F \le \left( \frac{n}{\mu} + \lambda n k \right) \|X' - X\|_F,
\end{equation*}
which completes the proof of Statement (c).
\end{proof}

The following proposition summarizes basic convexity and regularity properties of the smoothed function $h_\mu.$
\begin{proposition}
\label{pro:hconvex-special}
The function $h_\mu$ is convex and locally Lipschitz continuous on $\mathbb{R}^{k\times d}$.  
\end{proposition}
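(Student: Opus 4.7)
The plan is to prove convexity of each component of $h_\mu$ and then invoke the standard fact that a finite-valued convex function on a finite-dimensional space is automatically locally Lipschitz continuous. Recall from \eqref{hmu} that
\[
h_\mu(X)
=
\underbrace{\frac{\mu}{2}\sum_{i=1}^n\sum_{\ell=1}^k\Bigl[d\!\Bigl(\tfrac{x_\ell-a_i}{\mu};F^\circ\Bigr)\Bigr]^2}_{h^{(1)}_\mu(X)}
\;+\;
\underbrace{\sum_{i=1}^n\max_{r=1,\dots,k}\sum_{\substack{\ell=1\\ \ell\neq r}}^k\rho_\mu(x_\ell-a_i)}_{h^{(2)}_\mu(X)}.
\]
So the strategy is: (i) show $h^{(1)}_\mu$ is convex; (ii) show $h^{(2)}_\mu$ is convex; (iii) conclude convexity of $h_\mu$ and deduce local Lipschitz continuity from a classical theorem of convex analysis.

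For step (i), I would first note that since $F^\circ$ is a nonempty closed convex set, the distance function $z\mapsto d(z;F^\circ)$ is convex on $\R^d$ (see, e.g., \cite{rockafellar1970convex, mordukhovich2023easy}) and nonnegative. A nonnegative convex function remains convex after being squared, because the map $t\mapsto t^2$ is convex and nondecreasing on $[0,\infty)$ and the composition of a convex nondecreasing function with a convex function is convex. Composing further with the affine map $x_\ell\mapsto (x_\ell-a_i)/\mu$ preserves convexity. Summing finitely many convex terms in the block variables $(x_1,\dots,x_k)$ gives convexity of $h^{(1)}_\mu$ on $\R^{k\times d}$.

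For step (ii), observe from \eqref{eq:rho-smooth-general} that
\[
\rho_\mu(z)=\max_{y\in F^\circ}\Bigl\{\langle z,y\rangle-\tfrac{\mu}{2}\|y\|^2\Bigr\},
\]
so $\rho_\mu$ is a pointwise supremum of affine functions of $z$ and therefore convex on $\R^d$. For each fixed $i,r$, the inner sum $\sum_{\ell\neq r}\rho_\mu(x_\ell-a_i)$ is a finite sum of convex functions (each composed with an affine map in $X$), hence convex in $X$. Taking the maximum over $r\in\{1,\dots,k\}$ preserves convexity, and then summing over $i=1,\dots,n$ does too. Therefore $h^{(2)}_\mu$ is convex on $\R^{k\times d}$.

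Finally, $h_\mu=h^{(1)}_\mu+h^{(2)}_\mu$ is convex as a sum of two convex functions. Since $h_\mu$ is finite-valued on the whole space $\R^{k\times d}$, the classical result that any proper convex function on a finite-dimensional space is locally Lipschitz continuous on the interior of its effective domain (see \cite[Theorem~10.4]{rockafellar1970convex}) immediately yields the local Lipschitz continuity of $h_\mu$ on $\R^{k\times d}$. There is no real obstacle here; the only point requiring mild care is the squared-distance step, where one must remember to use nonnegativity before squaring to preserve convexity.
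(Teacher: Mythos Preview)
Your proof is correct and follows essentially the same approach as the paper: decompose $h_\mu$ into the squared-distance part and the max-of-sums part, check convexity of each, and then invoke the standard result that a finite convex function on a finite-dimensional space is locally Lipschitz. The paper's own proof is terser (it merely says ``it is not difficult to verify that $h_\mu$ is convex'' and cites \cite[Corollary~1.63]{mordukhovich2023easy} for local Lipschitz continuity), so your version simply fills in the details the paper omits.
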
 

\begin{proof}
By the definition of the smoothed model \eqref{hmu}, the function $h_\mu$ is decomposed as $$h_\mu(X) = h_\mu^{(1)}(X) + h_\mu^{(2)}(X),$$ 
where
\[ h_\mu^{(1)}(X) = \frac{1}{2\mu} \sum_{\ell=1}^k \sum_{i=1}^n d^2(x_\ell - a_i; F^\circ) \]
and
\[ h^{(2)}(X) = \sum_{i=1}^n \max_{r=1,\dots,k} \left\{ \sum_{\ell=1, \ell \neq r}^k \rho(x_\ell - a_i) \right\}. \]
Therefore, it is not difficult to verify that $h_\mu$ is convex. Moreover, since $h_\mu$ is finite, it is locally Lipschitz continuous (see, e.g., \cite[Corollary~1.63]{mordukhovich2023easy}).
\end{proof}

We now provide the main convergence result for the sequence generated by the proposed DCA scheme.

\begin{theorem}[Monotone Descent and Stationarity] \label{thm:dca-convergence-special}
Let $\{X^{(t)}\}_{t\ge0}$ be the sequence of iterates generated by Algorithm~\ref{alg:DCA}
for the DC decomposition $f_\mu = g_\mu - h_\mu$. Then:
\begin{enumerate}
\item[{\rm (a)}] The sequence $\{X^{(t)}\}$ is bounded and satisfies
\[
f_\mu(X^{(t)}) - f_\mu(X^{(t+1)}) \;\ge\; \frac{n}{2\mu} \,\|X^{(t+1)} - X^{(t)}\|_F^2
\quad\text{for all } t\ge 0.
\]
\item[{\rm (b)}] The sequence of objective values $\{f_\mu(X^{(t)})\}$ is nonincreasing and converges to a finite value.
\item[{\rm (c)}] Every accumulation point $X^\star$ of $\{X^{(t)}\}$ is a critical point of $f_\mu$, i.e.,
\[
\nabla g_\mu(X^\star) \in \partial h_\mu(X^\star).
\]
\end{enumerate}
\end{theorem}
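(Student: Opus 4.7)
The plan is to follow the classical DCA convergence template, exploiting the two structural facts already established for the smoothed decomposition $f_\mu = g_\mu - h_\mu$: the strong convexity of $g_\mu$ with parameter $n/\mu$ (Theorem~\ref{thm:smoothness-special}(b)) and the convexity and finiteness of $h_\mu$ (Proposition~\ref{pro:hconvex-special}). The DCA update is characterized by the first-order condition $\nabla g_\mu(X^{(t+1)}) = Y^{(t)} \in \partial h_\mu(X^{(t)})$, which sits at the heart of every step below.

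For the descent inequality in (a), I would combine two elementary inequalities. The strong convexity of $g_\mu$ applied to the pair $(X^{(t+1)}, X^{(t)})$, together with $\nabla g_\mu(X^{(t+1)}) = Y^{(t)}$, gives
\[
g_\mu(X^{(t)}) - g_\mu(X^{(t+1)}) \;\ge\; \langle Y^{(t)},\, X^{(t)} - X^{(t+1)}\rangle + \frac{n}{2\mu}\|X^{(t+1)} - X^{(t)}\|_F^2 .
\]
On the other hand, the subgradient inequality for the convex function $h_\mu$ at $X^{(t)}$ reads
\[
h_\mu(X^{(t+1)}) - h_\mu(X^{(t)}) \;\ge\; \langle Y^{(t)},\, X^{(t+1)} - X^{(t)}\rangle .
\]
Subtracting the second from the first cancels the inner-product terms and yields the asserted descent estimate for $f_\mu = g_\mu - h_\mu$. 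To obtain boundedness, I would first verify that $f_\mu$ is coercive. This reduces to the coercivity of $\rho_\mu$, which follows from the uniform bound $\rho(z) - \tfrac{\mu}{2}\|F^\circ\|^2 \le \rho_\mu(z) \le \rho(z)$ (obtained by comparing the two $\max$ definitions) combined with the lower estimate $\rho(z) \ge \|z\|/\|F\|$ of Lemma~\ref{lemmarho0}(g). The coercivity argument then proceeds exactly as in the proof of Theorem~\ref{thm1}, and together with the monotonicity $f_\mu(X^{(t)}) \le f_\mu(X^{(0)})$ confines $\{X^{(t)}\}$ to a bounded sublevel set.

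Part (b) is an immediate consequence of (a): the sequence $\{f_\mu(X^{(t)})\}$ is nonincreasing, and it is bounded below because $f_\mu$ is continuous and coercive (hence attains its infimum). Monotone convergence then gives a finite limit. Summing the descent inequality across all $t$ also yields $\sum_{t\ge 0}\|X^{(t+1)} - X^{(t)}\|_F^2 < \infty$, and in particular $\|X^{(t+1)} - X^{(t)}\|_F \to 0$, which is the key technical ingredient for part (c). For (c), let $X^\star$ be an accumulation point of $\{X^{(t)}\}$ and let $X^{(t_j)} \to X^\star$ along a subsequence. The vanishing-increment property forces $X^{(t_j+1)} \to X^\star$ as well. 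Since $\nabla g_\mu$ is continuous (in fact Lipschitz by Theorem~\ref{thm:smoothness-special}(c)),
\[
Y^{(t_j)} = \nabla g_\mu(X^{(t_j+1)}) \;\longrightarrow\; \nabla g_\mu(X^\star) .
\]
Because $Y^{(t_j)} \in \partial h_\mu(X^{(t_j)})$ and the convex subdifferential $\partial h_\mu$ has closed graph, passing to the limit gives $\nabla g_\mu(X^\star) \in \partial h_\mu(X^\star)$, which is the criticality condition.

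The main obstacle, such as it is, lies not in the algebraic manipulations but in verifying coercivity of $f_\mu$, since one must check that Nesterov smoothing does not destroy the growth properties of $\rho$ relied upon in Theorem~\ref{thm1}. The uniform two-sided bound on $\rho_\mu - \rho$ noted above resolves this cleanly, after which everything else follows from the strong-convexity/subgradient calculus that is standard for DCA.
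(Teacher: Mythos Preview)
Your proposal is correct and follows essentially the same route as the paper's proof: strong convexity of $g_\mu$ plus the subgradient inequality for $h_\mu$ give the descent estimate, coercivity of $f_\mu$ (via a linear lower bound on $\rho_\mu$ inherited from $\rho$) yields boundedness, and the summable increments together with the closed graph of $\partial h_\mu$ deliver criticality of accumulation points. One small wording slip: the two displayed inequalities should be \emph{added}, not subtracted, to cancel the inner-product terms---subtracting inequalities in that direction is not valid---but the conclusion you draw is the correct one.
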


\begin{proof}
At each step $t$, $X^{(t+1)}$ is obtained by solving the linear system \eqref{eq:linear-system},
which is the optimality condition for the subproblem
\[
\min_{X\in\R^{k\times d}} \{ g_\mu(X) - \langle Y^{(t)}, X \rangle \},
\qquad Y^{(t)} \in \partial h_\mu(X^{(t)}).
\]
It follows from Theorem \ref{thm:smoothness-special} (a) that $g_\mu$ is $(n/\mu)$-strongly convex
and
\[
g_\mu(X) \;\ge\; g_\mu(Z)
+ \langle \nabla g_\mu(Z), X-Z \rangle
+ \frac{n}{2\mu}\,\|X-Z\|_F^2
\quad\text{for all }X,Z\in\R^{k\times d}.
\]
Applying this inequality with $X=X^{(t)}$ and $Z=X^{(t+1)}$ and using the optimality condition
\eqref{condition}, we obtain
\[
g_\mu(X^{(t)}) \;\ge\;
g_\mu(X^{(t+1)}) + \langle Y^{(t)}, X^{(t)} - X^{(t+1)} \rangle
+ \frac{n}{2\mu}\,\|X^{(t)} - X^{(t+1)}\|_F^2.
\]
On the other hand, by convexity of $h_\mu$ and $Y^{(t)} \in \partial h_\mu(X^{(t)})$, we have
\[
h_\mu(X^{(t+1)}) \;\ge\;
h_\mu(X^{(t)}) + \langle Y^{(t)}, X^{(t+1)} - X^{(t)} \rangle.
\]
Adding these inequalities yields
\[
f_\mu(X^{(t)}) - f_\mu(X^{(t+1)})
\;\ge\;
\frac{n}{2\mu}\,\|X^{(t+1)} - X^{(t)}\|_F^2,
\]
which proves the descent inequality in (a). In particular, $\{f_\mu(X^{(t)})\}$ is nonincreasing.

From the representation of $\rho_\mu$ in \eqref{eq:rho-smooth-general} and the compactness of $F^\circ$,
there exist constants $c_1>0$ and $c_2\in\mathbb{R}$ such that
\[
\rho_\mu(z)\ge c_1\|z\|-c_2
\qquad \text{for all } z\in\mathbb{R}^d.
\]
Using the same arguments as in the proof of Theorem \ref{thm1}, it follows that $f_\mu$ is coercive and hence all its sublevel sets
$$\{X\in \R^d \mid f_\mu(X) \le f_\mu(X^{(0)})\}$$ are bounded. Since $\{X^{(t)}\}$ remains in the initial
sublevel set by monotonicity, the sequence $\{X^{(t)}\}$ is bounded. Furthermore, $f_\mu$ is
bounded below, so $\{f_\mu(X^{(t)})\}$ converges to a finite value, which proves (b).

Let $X^\star$ be an accumulation point of $\{X^{(t)}\}$, and let $\{t_j\}$ be such that
$X^{(t_j)} \to X^\star$. Summing the inequality in (a) over $t$ shows that
$$\sum_t \|X^{(t+1)} - X^{(t)}\|_F^2 < \infty,$$ 
which implies that
$$\|X^{(t_j+1)} - X^{(t_j)}\|_F \to 0.$$ 
Thus, $X^{(t_j+1)} \to X^\star$ as well.
By Theorem \ref{thm:smoothness-special}(c), $\nabla g_\mu$ is continuous, and hence we have
\[
Y^{(t_j)} = \nabla g_\mu(X^{(t_j+1)}) \longrightarrow \nabla g_\mu(X^\star).
\]
Since $Y^{(t_j)} \in \partial h_\mu(X^{(t_j)})$ and the graph of $\partial h_\mu$ is closed (see, e.g.,\cite[Theorem 24.4]{rockafellar1970convex}),
we obtain
\[
\nabla g_\mu(X^\star) \in \partial h_\mu(X^\star),
\]
which shows that $X^\star$ is a critical point of $f_\mu$ and proves (c).
\end{proof}

\section{Numerical Experiments}
\label{sec:section8}

This section presents numerical experiments designed to evaluate the behavior of LDCA‑K across a range of synthetic and real datasets exhibiting diverse geometric properties. All computations were performed on a MacBook Air equipped with an Apple M4 processor and 16\,GB of unified memory. The implementation was written in Python using standard numerical libraries for linear algebra and optimization.

\subsection{Test Datasets}

We consider four datasets—one real and three synthetic—chosen to represent increasing levels of geometric complexity and cluster ambiguity. The ground-truth number of clusters is denoted by $k^\star$. Across four datasets—three synthetic with known ground truth and the
Fisher--Iris dataset—the method reliably recovers the correct cluster count in
all settings. These results demonstrate that the fusion mechanism provides effective and automatic model complexity control with minimal tuning.

\paragraph{Fisher Iris dataset.}
The Iris dataset consists of $n=150$ observations in $\mathbb{R}^4$, corresponding to sepal and petal measurements from three species. The ground-truth number of clusters is $k^\star=3$. One class (\emph{setosa}) is well separated, while the remaining two (\emph{versicolor} and \emph{virginica}) exhibit substantial overlap, making this a classical benchmark for testing clustering algorithms under weak separation.

\paragraph{Synthetic Laplace data 1 ($k^\star=3$).}
This dataset comprises three well-separated clusters in $\mathbb{R}^2$, each generated from a Laplace distribution with identical scale parameters. Each cluster contains $150$ observations. The geometry is symmetric and strongly separated, aligning closely with the noise model assumed by LDCA-K.

\paragraph{Synthetic Laplace data 2 ($k^\star=4$).}
This dataset extends the previous construction to four Laplace-distributed clusters in $\mathbb{R}^2$, each containing $100$ observations. All clusters remain well separated and balanced, but the additional cluster introduces more complex interactions between prototype fusion and deletion dynamics.

\paragraph{Synthetic Gaussian data ($k^\star=4$).}
This dataset consists of four Gaussian clusters in $\mathbb{R}^2$, with 200 samples per cluster and moderate inter‑cluster separation. To increase difficulty, the fourth cluster is placed at the centroid of the remaining three, thereby creating overlapping attraction basins. Compared to the Laplace datasets, this setting is intentionally adversarial for LDCA‑K, which—by design—draws prototypes toward their mean and can therefore obscure the correct model order when clusters are partially co-located.

\subsection{Empirical Behavior and Practical Parameter Selection}

A central challenge in prototype--based clustering models with both fusion
($\lambda$) and smoothing ($\mu$) penalties is choosing regularization
strengths that yield stable, interpretable solutions. Our aim here is
not to optimize hyperparameters exhaustively, but to document a simple,
repeatable procedure that behaves consistently across heterogeneous datasets.
We report what was done, what was observed, and where the evidence should be
interpreted with caution.

\paragraph{Regularization path construction (warm starts).}
For each dataset, we initialized the model with a modestly over--specified set
of prototypes ($k_0 = 10$). We then constructed a paired, one--dimensional
trajectory of regularization parameters
\[
\bigl\{(\lambda_t,\mu_t)\bigr\}_{t=1}^{N},\qquad
\lambda_t \in \mathrm{GeomSpace}\!\left(10^{-2},\,2.0,\,N\right),\quad
\mu_t \in \mathrm{GeomSpace}\!\left(2.0,\,10^{-4},\,N\right),
\]
with $N=100$. The path is traversed sequentially so that $\lambda_t$
monotonically increases while $\mu_t$ monotonically decreases. Critically, we
use \emph{warm starts}: the surviving prototypes obtained at step $t$ serve as
the initialization for step $t{+}1$. This produces a coherent, piecewise
continuous trajectory of solutions that reflects how the model simplifies the
prototype representation as fusion pressure strengthens (larger $\lambda$) and
smoothing weakens (smaller $\mu$).

\paragraph{Per--step procedure and logged quantities.}
At each $(\lambda_t,\mu_t)$ along the path we:
(i) run the DCA solver to convergence starting from the warm--started
prototypes; (ii) prune empty prototypes; and (iii) record
summary statistics of the resulting partition: the number of surviving
prototypes, and the minimum, maximum, mean, and standard deviation of cluster
sizes after nearest--prototype assignment. This yields a complete ``regularization
path'' of prototype counts and size statistics as functions of $t$ (and thus
implicitly of $(\lambda,\mu)$).

\paragraph{Observed behavior.}
Across all four datasets we observed a consistent pattern along the warm--started
path:
\begin{enumerate}
    \item {\em Rapid reduction from $k_0$ to the data--driven regime.}
    After a short initial segment at small $\lambda$ (large $\mu$), the number
    of prototypes drops quickly from the over--specified $k_0{=}10$ to a
    smaller, stable range.

    \item {\em Stabilization at the known cluster count.}
    On the three synthetic datasets, the path stabilizes at the ground--truth
    number of clusters ($k{=}3$ or $k{=}4$) and remains there over the vast
    majority of the trajectory. On \emph{Iris}, the path stabilizes at
    $k{=}3$, consistent with the three species.

    \item {\em Interpretable size statistics.}
    Once stabilized, the cluster--size statistics remain steady and
    dataset--appropriate: nearly uniform on balanced synthetic data; mildly
    imbalanced but stable on \emph{Iris}, reflecting the known overlap between
    \emph{versicolor} and \emph{virginica}. As $\lambda$ grows further (with
    $\mu$ shrinking), boundaries may shift and size variability can increase,
    yet the prototype count typically remains constant within a broad region.

    \item {\em Continuity induced by warm starts.}
    Warm starts yield piecewise continuous changes in both prototype locations
    and counts, avoiding erratic jumps that can arise from cold--starting each
    grid point independently. This facilitates interpretation of the entire
    path as a data--adaptive simplification process.
\end{enumerate}

\paragraph{Interpretation.}
The empirical evidence suggests that a simple paired geometric schedule for
$(\lambda,\mu)$, when combined with warm starts, yields a robust and
interpretable regularization path. In all tested cases, the method quickly
discovers a plausible cluster structure and maintains it across a wide portion
of the path. The stability of the prototype count, together with steady
cluster--size statistics, indicates qualitative robustness in practice.

\paragraph{Caveats and scope.}
Warm starts introduce path dependence: the final solution at a given
$(\lambda,\mu)$ can depend on the trajectory taken through parameter space.
Our goal here is not to claim trajectory independence or universal optimality,
but to document a practical, reproducible procedure that behaves consistently on
diverse datasets. Broader studies would help to delineate the limits and generality of these observations. Within the scope tested, however, the warm--started
path offers a clear and stable view of how fusion and smoothing jointly govern
the emergence of cluster structure.

\begin{figure}[H]
    \centering
    \includegraphics[width=0.750\textwidth]{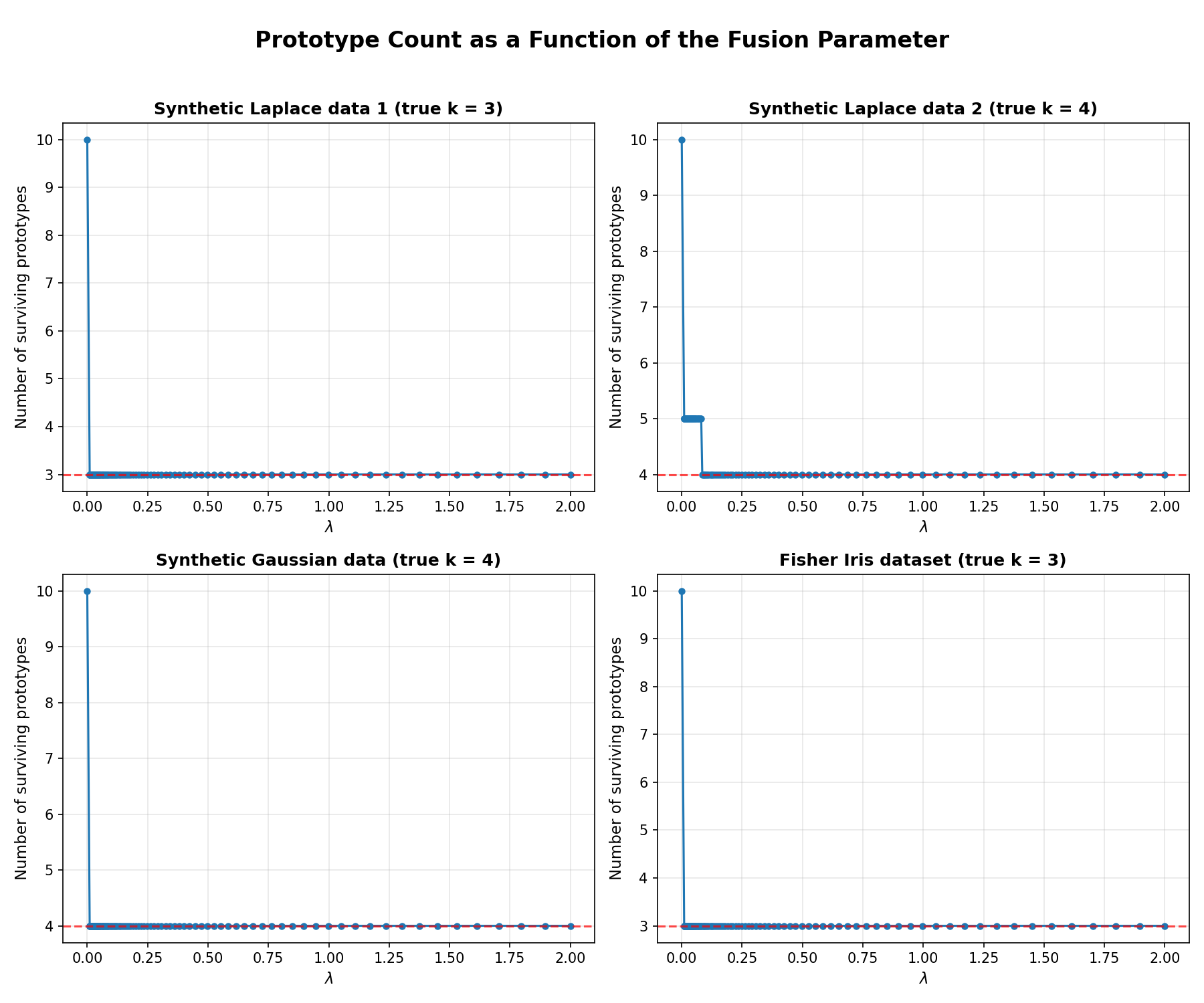}
    \caption{
Prototype count along the warm--started regularization path for four datasets.
Each panel shows the number of surviving prototypes as a function of the fusion
parameter $\lambda$, while the smoothing parameter $\mu$
simultaneously decreases along a paired geometric schedule.  All trajectories
begin with an intentionally over--specified initialization ($k_0=10$
prototypes).  As $\lambda$ increases, redundant prototypes rapidly merge, and
the model stabilizes at a data--driven number of clusters.  On the three
synthetic datasets (top row and bottom--left), the path consistently converges
to the known ground--truth structure ($k=3$ or $k=4$), and this value remains
stable across the entire range of $(\lambda,\mu)$.  On the Iris dataset
(bottom--right), the path similarly stabilizes at $k=3$, matching the three
species.  The figure illustrates the qualitative robustness of the warm--started
regularization path: once a coherent cluster structure emerges, it persists over
a broad region of the fusion parameter, even as smoothing weakens and cluster
boundaries shift.
}
    \label{fig:model-illustration}
\end{figure}

\paragraph{Synthetic Laplace data ($k^\star=3$).}

We first examine the idealized Laplace setting to illustrate the geometric behavior of LDCA-K. The dataset consists of three clusters centered at
\[
(-3,0), \quad (3,0), \quad (0,\sqrt{27}),
\]
each corrupted by independent Laplace noise with scale $0.25$. Each cluster contains $150$ observations, for a total of $n=450$ points. %(Figure~\ref{fig:ldca_geometry}(a)).

Figure~\ref{fig:dca_traj} shows prototype trajectories during a single inner DCA optimization prior to deletion. Prototypes are initially attracted to distinct regions of the data, but the smoothed DC geometry ultimately induces collapse onto shared stationary points. Redundant prototypes are subsequently removed by the outer deletion mechanism.

\begin{figure}[H]
\centering
\includegraphics[width=0.40\linewidth]{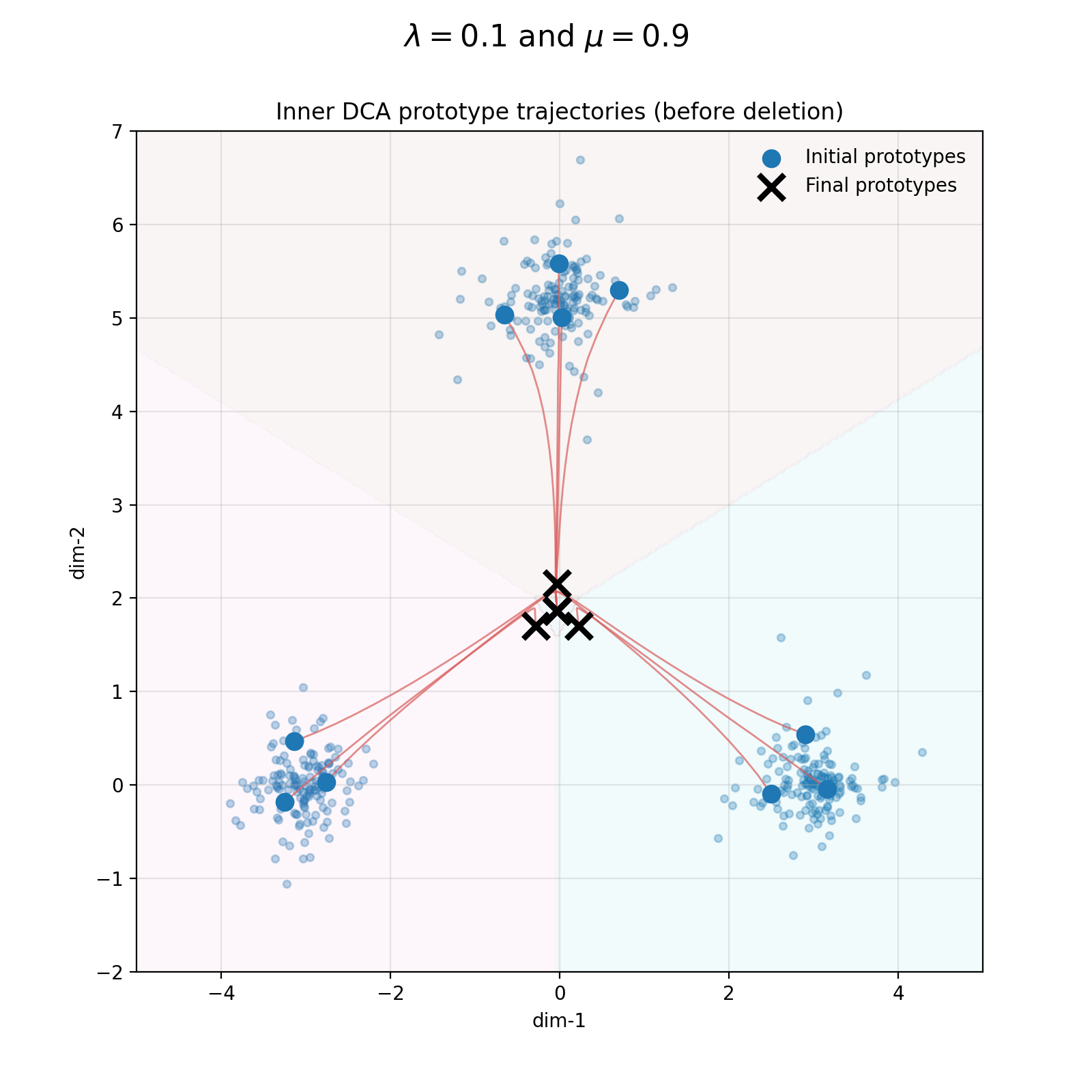}
\caption{Prototype trajectories during inner DCA optimization prior to deletion. Multiple prototypes collapse onto shared stationary points.}
\label{fig:dca_traj}
\end{figure}

\begin{figure}[H]
\centering
\includegraphics[width=0.40\linewidth]{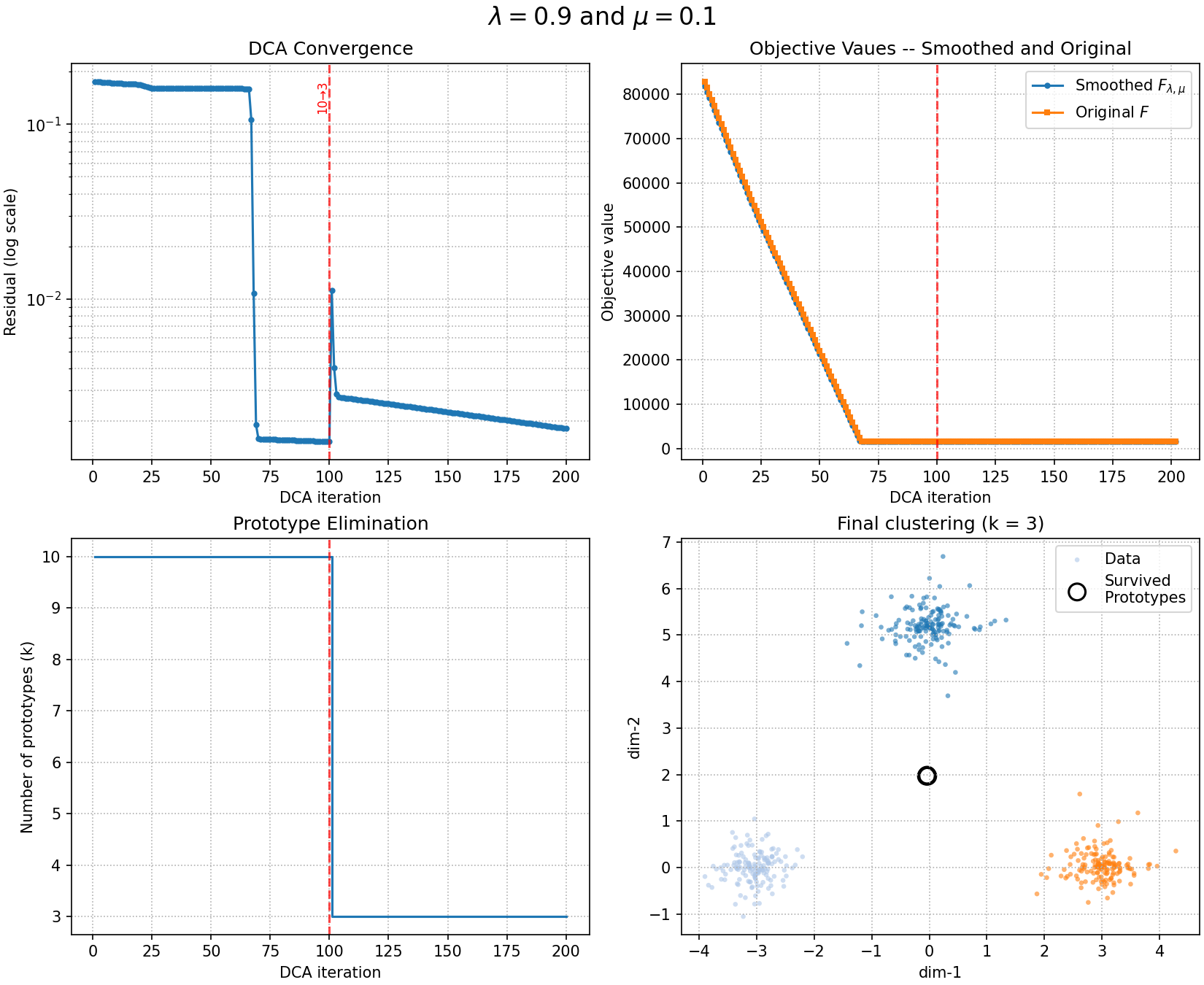}
\caption{Representative LDCA-K run illustrating convergence behavior and correct model selection despite prototype fusion.}
\label{fig:example_conv_plots}
\end{figure}

To quantify clustering accuracy under prototype fusion, we compute the Adjusted Rand Index (ARI) as a function of the total pairwise prototype spread
\[
D_{\text{centers}} = \sum_{\ell < j} \|x^\ell - x^j\|.
\]
Table~\ref{tab:fusion_vs_ari_K03} shows that ARI remains near $1.0$ even when $D_{\text{centers}}$ indicates near-complete prototype fusion, confirming that LDCA-K prioritizes partition recovery over centroid identifiability.

\begin{table}[H]
\centering
\caption{Clustering quality as a function of prototype separation for the $k^\star=3$ Laplace dataset.}
\label{tab:fusion_vs_ari_K03}
\begin{tabular}{l c c c c c}
\toprule
Center distance & Mean $k_{\text{eff}}$ & Std.\ $k_{\text{eff}}$ & Mean ARI & Std.\ ARI & Count \\
\midrule
$< 0.01$      & 3.00 & 0.00 & 1.00 & 0.00 & 43 \\
$[0.01,1.0)$  & 3.00 & 0.00 & 1.00 & 0.00 & 6787 \\
$[1.0,2.0)$   & 3.00 & 0.00 & 1.00 & 0.00 & 1270 \\
$[2.0,5.0)$   & 3.00 & 0.00 & 1.00 & 0.00 & 100 \\
$[5.0,10.0)$  & 3.00 & 0.00 & 1.00 & 0.00 & 456 \\
$[10.0,25.0)$ & 3.00 & 0.00 & 1.00 & 0.00 & 1310 \\
$\ge 25.0$    & 4.00 & 0.00 & 0.996 & 0.001 & 34 \\
\bottomrule
\end{tabular}
\end{table}

\paragraph{Synthetic Laplace data ($k^\star=4$).}

We next consider a four-cluster Laplace dataset with centers at
\[
(0,0),\ (2,0),\ (0,2),\ (2,2),
\]
and $100$ observations per cluster. LDCA-K is initialized with $k_0=10$ prototypes.

Figure~\ref{fig:phase_diagram} shows the resulting phase diagram over the $(\lambda,\mu)$ grid. For small $\mu$, excessive fusion can lead to underestimation of $k^\star$. As $\mu$ increases, smoothing stabilizes the DC updates and produces a broad region in which the correct model order is consistently recovered.

\begin{figure}[H]
\centering
\includegraphics[width=0.45\linewidth]{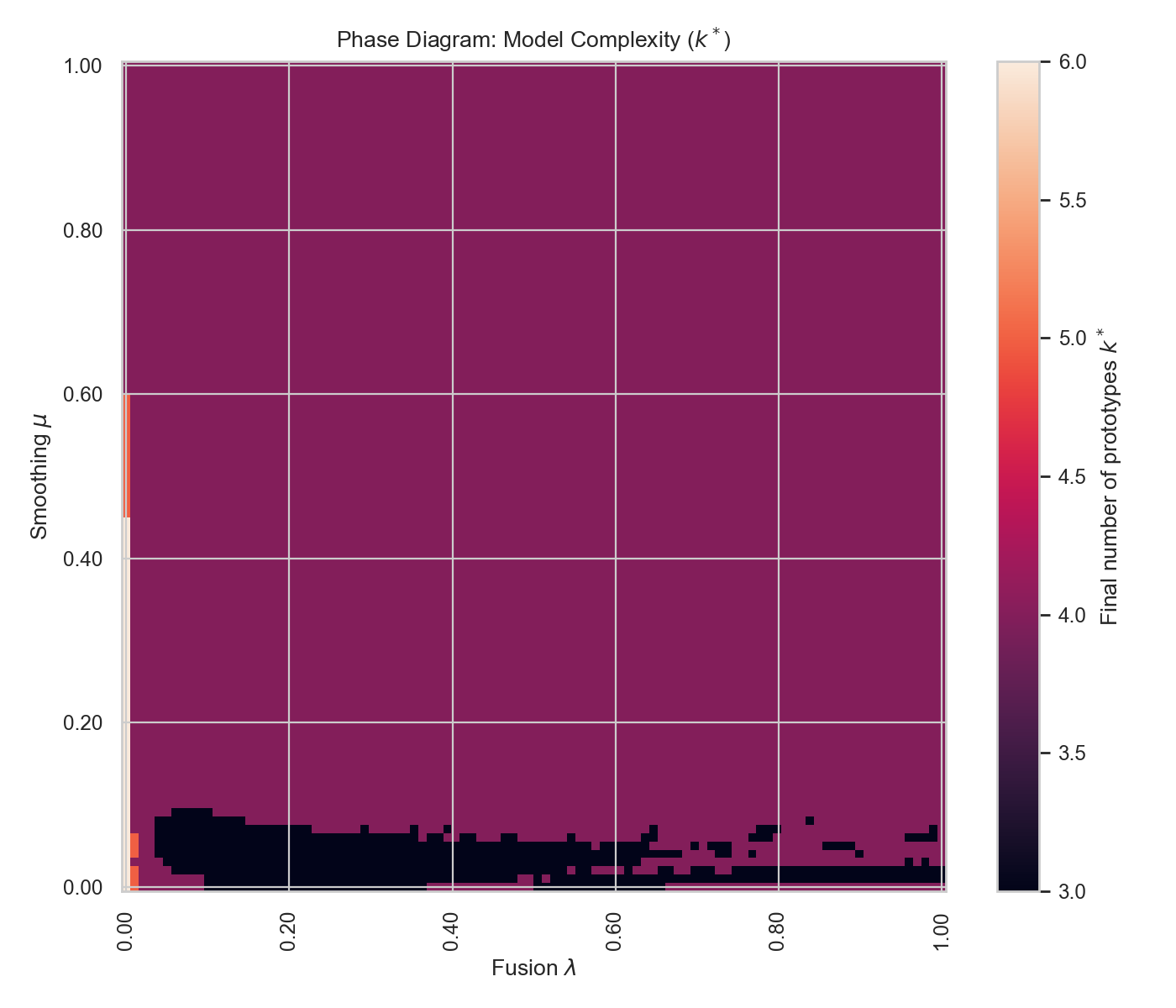}
\caption{Phase diagram of the effective number of clusters over the $(\lambda,\mu)$ grid.}
\label{fig:phase_diagram}
\end{figure}

Representative solutions for low and high $\mu$ are shown in
Figure~\ref{fig:example_mu_comparison}, illustrating the complementary roles
of fusion and smoothing.

\begin{figure}[H]
\centering
\includegraphics[width=0.48\linewidth]{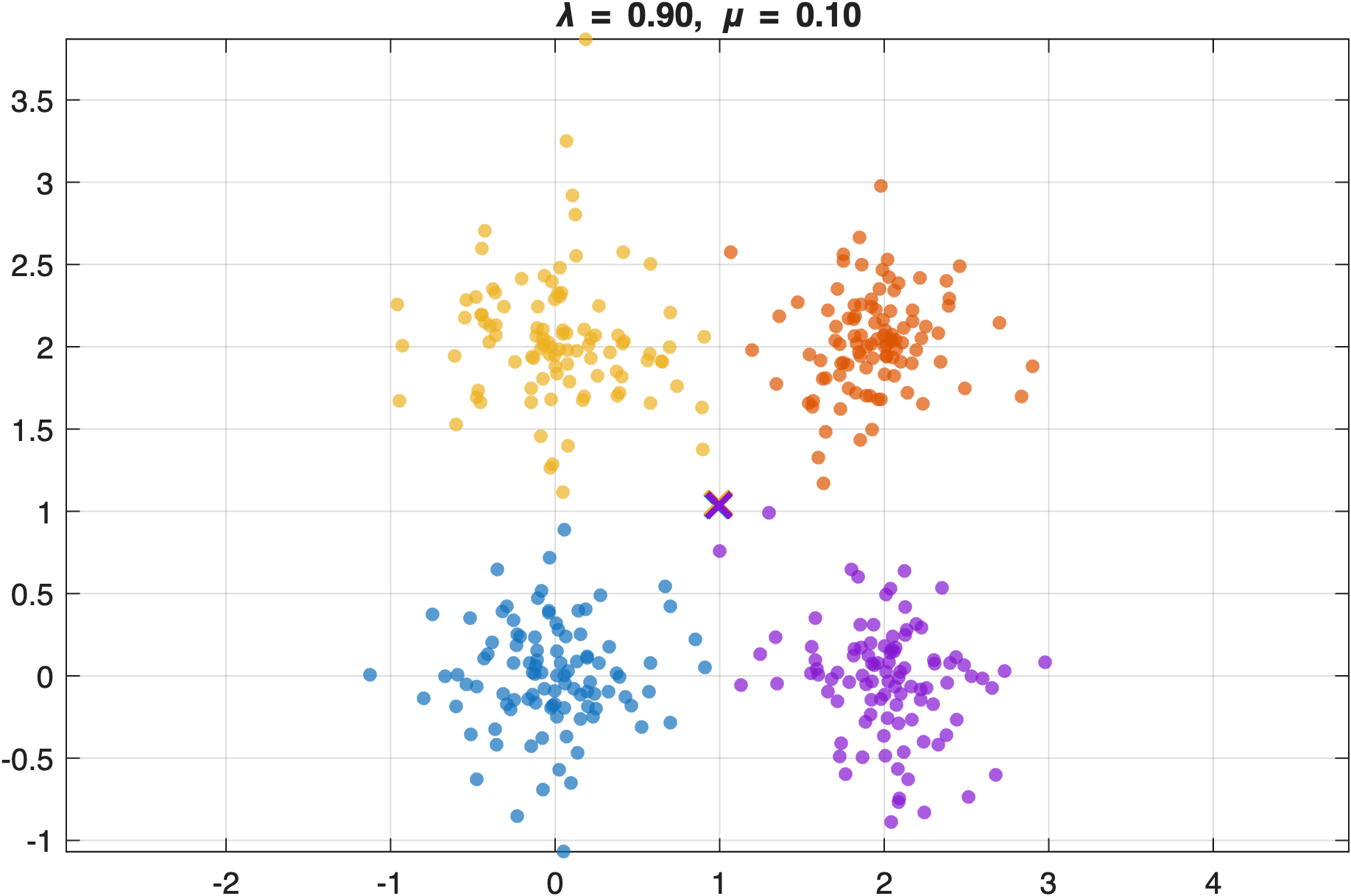}\hfill
\includegraphics[width=0.48\linewidth]{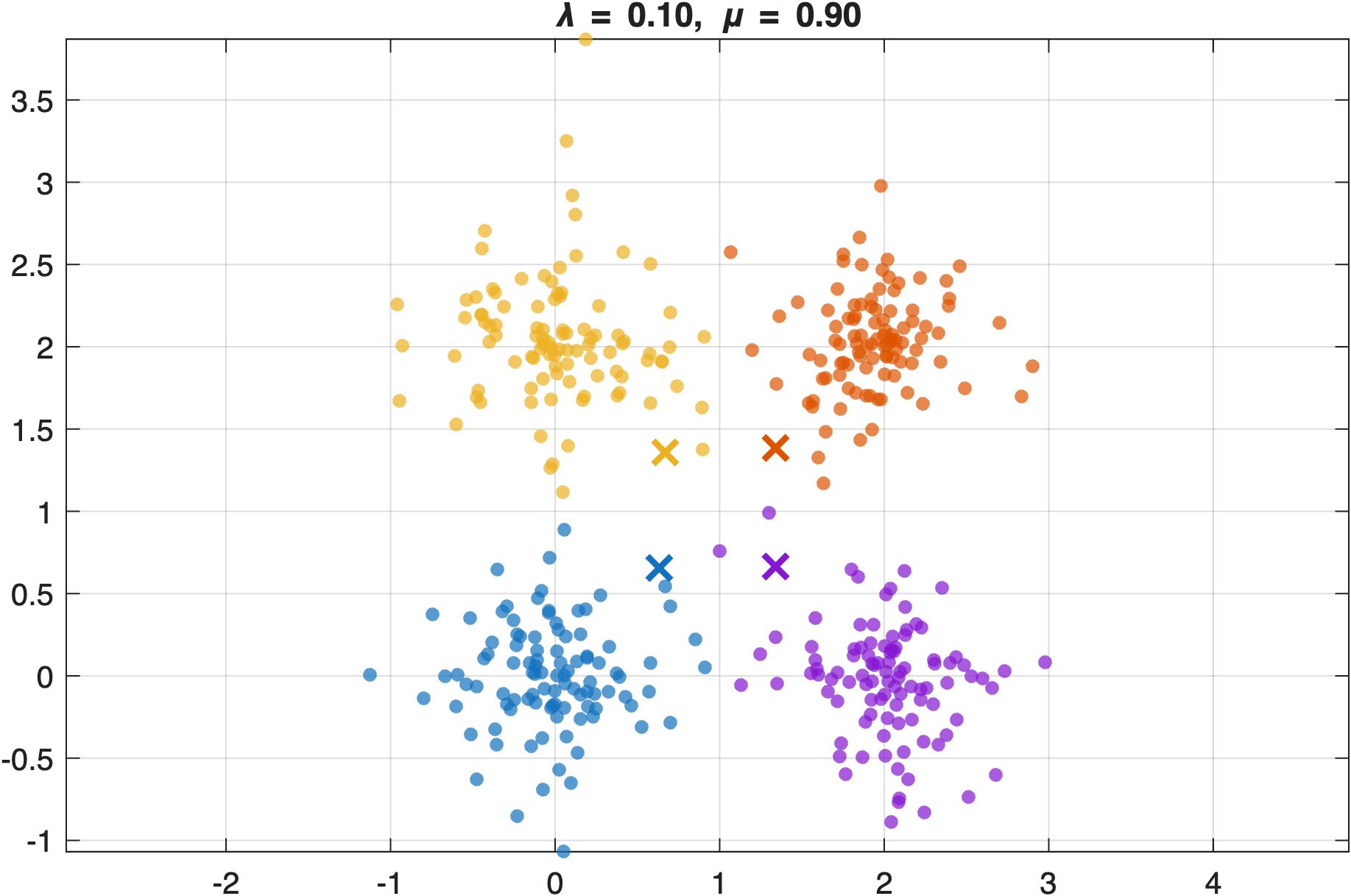}
\caption{
Representative LDCA-K solutions illustrating the complementary roles of fusion
and smoothing.
\textbf{Left:} $\lambda=0.90$, $\mu=0.10$, where strong fusion with limited smoothing
leads to aggressive prototype merging and a reduced effective model.
\textbf{Right:} $\lambda=0.10$, $\mu=0.90$, where increased smoothing stabilizes the
optimization and yields four well-separated prototypes aligned with the true
cluster centers.
}
\label{fig:example_mu_comparison}
\end{figure}

\paragraph{Synthetic Gaussian data ($k^\star=4$).}

In the most ambiguous setting, one Gaussian cluster is centered at the global mean of the other three. In this case, LDCA-K recovers the true number of clusters about 18\% of parameter settings (see Table~\ref{tab:fusion_vs_ari_K03}).

\begin{figure}[H]
\centering
\includegraphics[width=0.65\linewidth]{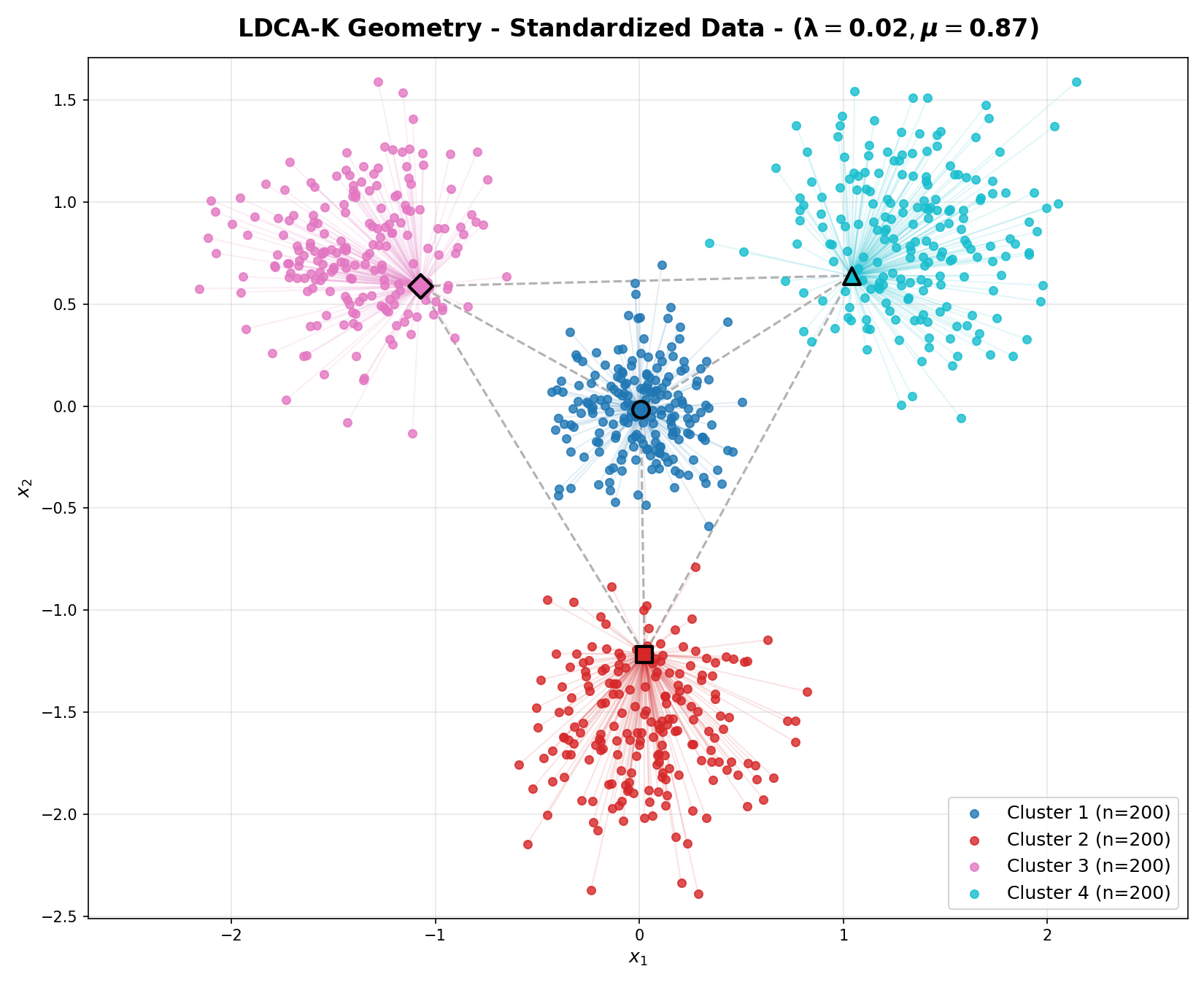}
\caption{
Final Euclidean cluster assignments for standardized synthetic Gaussian data ($k^\star=4$) dataset.} 
\label{fig:final-clusters}
\end{figure}

\begin{figure}[H]
\centering
\includegraphics[width=0.65\linewidth]{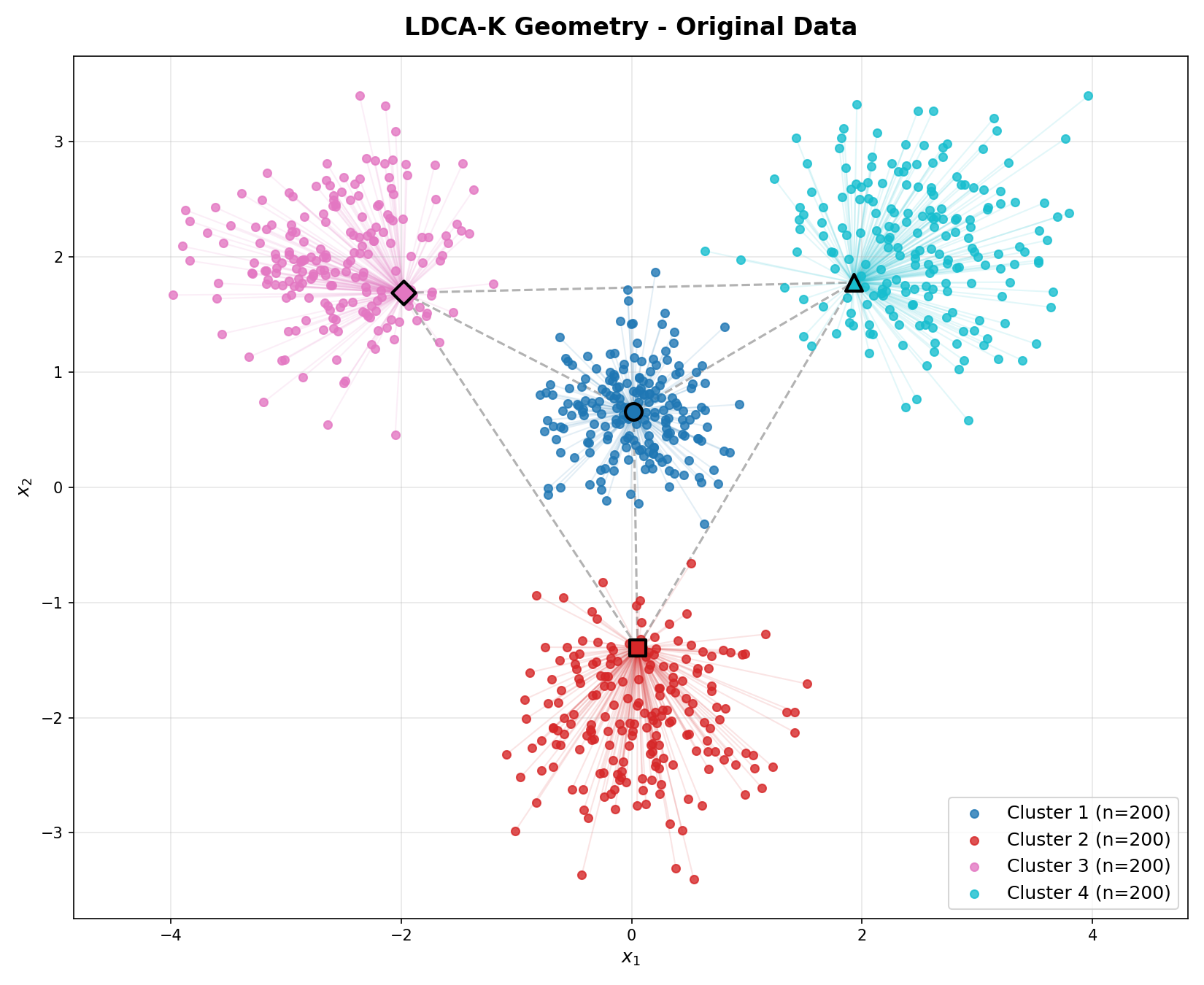}
\caption{
The corresponding Euclidean cluster assignments in the original dataset.} 
\label{fig:final-clusters1}
\end{figure}

\begin{figure}[H]
\centering
\includegraphics[width=0.75\linewidth]{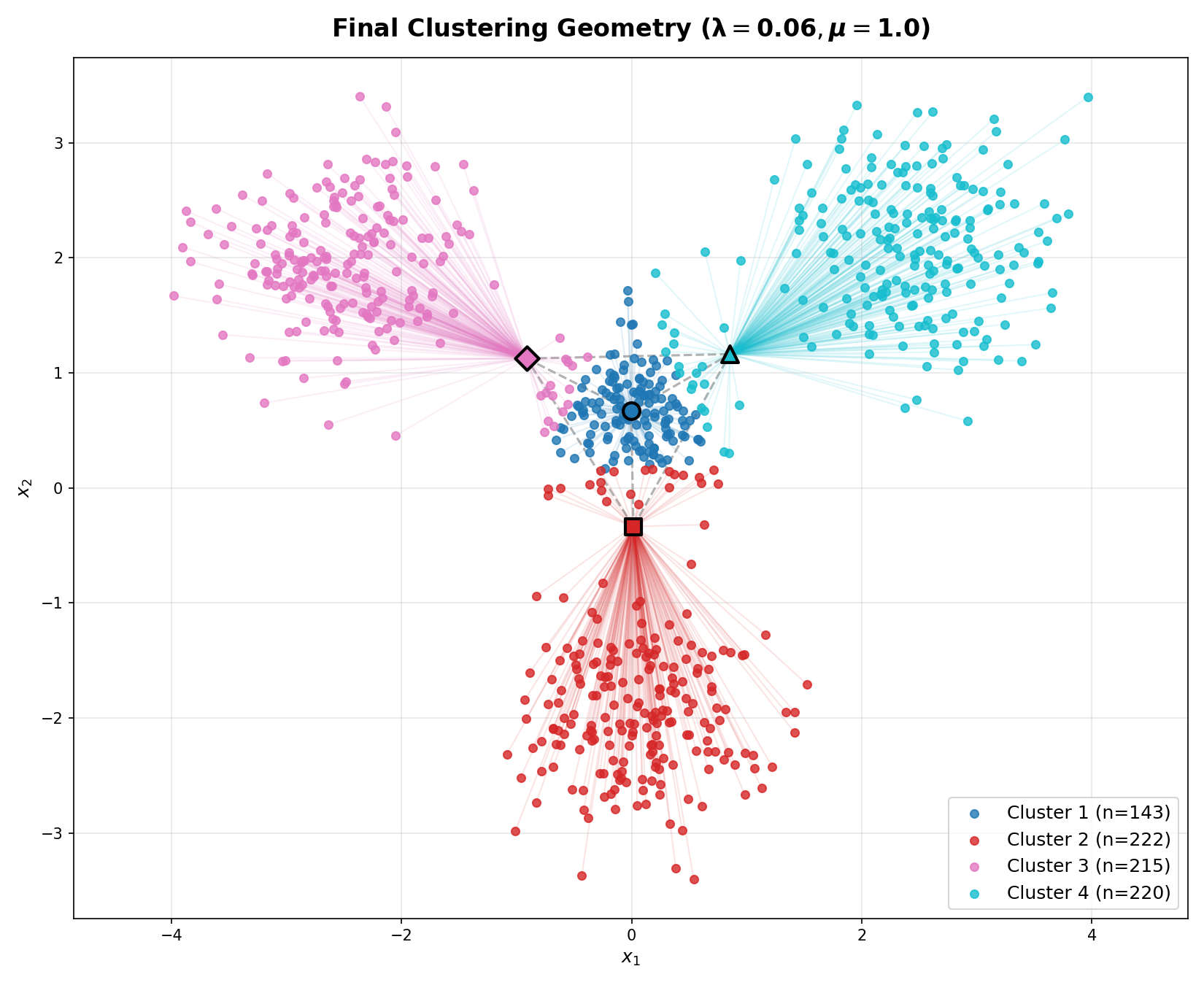}
\caption{
Final Euclidean cluster assignments for best run on the original dataset.} 
\label{fig:final-clusters2}
\end{figure}

\subsection{Discussion}

The empirical results indicate that LDCA--K provides a coherent and effective
framework for simultaneous clustering and model selection across a range of
geometric conditions. Beginning from intentionally overparameterized prototype
sets, the method quickly removes redundant components through the combined
action of fusion and pruning, yielding compact cluster representations without
requiring delicate initialization or manual tuning. Prototype fusion is not an
auxiliary post--processing step but an intrinsic property of the objective: as
the fusion penalty grows, closely positioned prototypes naturally coalesce,
thereby performing model order reduction in a data--driven manner.

In datasets with clear separation, LDCA--K reliably identifies the ground--truth
number of clusters and maintains this structure across a wide region of the
regularization path. This behavior is particularly notable given the use of
warm starts: despite the inherent path dependence, the number of surviving
prototypes stabilizes early and remains unchanged even as the regularization
parameters continue to vary. Such stability suggests that the method is
capturing persistent geometric structure rather than transient artifacts of the
optimization trajectory.

In more ambiguous settings---for example, when clusters overlap or exhibit
anisotropic shapes---the method displays a principled sensitivity to the
regularization parameters. Instead of producing fragmented or unstable
solutions, LDCA--K tends to favor simpler and more interpretable clusterings
that are consistent with the underlying data geometry. This reflects a desirable
property: the effective number of clusters emerges from a balance between the
fusion pressure, the smoothing penalty, and the intrinsic separation in the
data, rather than from brittle thresholding or hand-tuned heuristics.

Overall, the experiments demonstrate that the number of clusters recovered by
LDCA--K is driven primarily by geometric structure and the regularization path,
not by initialization randomness or numerical instability. While a broader set
of datasets would be needed to characterize the full scope of this behavior,
the evidence presented here supports the view that LDCA--K offers a stable,
interpretable, and practically robust mechanism for clustering with automatic
model complexity control.


\begin{thebibliography}{99}

\bibitem{An2025}
An, L.T.H. (2025). A new model and DCA based algorithm for clustering. Soft Comput. 1--11.

\bibitem{An2007}
An, L.T.H., Belghiti, M.T., Tao, P.D. (2007). A new efficient algorithm based on DC programming and DCA for clustering. J. Glob. Optim. \textbf{37}(4), 593--608.

\bibitem{An2005}
An, L.T.H., Tao, P.D. (2005). The DC (difference of convex functions) programming and DCA revisited with DC models of real world nonconvex optimization problems. Ann. Oper. Res. \textbf{133}(1), 23--46.

\bibitem{LeThi2018}
An, L.T.H., Tao, P.D. (2018). DC programming and DCA: thirty years of developments. Math. Program. \textbf{169}(1), 5--68.

\bibitem{LeThi2024}
An, L.T.H., Tao, P.D. (2024). Open issues and recent advances in DC programming and DCA. J. Glob. Optim. \textbf{88}(3), 533--590.

\bibitem{LeThi1996}
An, L.T.H., Tao, P.D. Muu, L.D. (1996). Numerical solution for optimization over the efficient set by DC optimization algorithms. Oper. Res. Lett. \textbf{19}(3), 117--128.

\bibitem{AragonArtacho2022}
Aragón-Artacho, F.J., Campoy, R., Vuong, P.T. (2022). The boosted DC algorithm for linearly constrained DC programming. Set-Valued Var. Anal. \textbf{30}(4), 1265--1289.


\bibitem{AragonArtacho2018}
Aragón-Artacho, F.J., Fleming, R.M.T., Vuong, P.T. (2018). Accelerating the DC algorithm for smooth functions. Math. Program. \textbf{169}(1), 95--118.

\bibitem{Artacho2020}
Aragón-Artacho, F.J., Vuong, P.T. (2020). The boosted difference of convex functions algorithm for nonsmooth functions. SIAM J. Optim. \textbf{30}(1), 980--1006.


\bibitem{Berge}
Berge, C. (1997). \emph{Topological Spaces}. Dover Publications, Mineola.

\bibitem{Bouveyron}
Bouveyron C, Brunet-Saumard C (2014) Model-based clustering of
high-dimensional data: a review. Comput Stat Data Anal 71:52--78

\bibitem{boyd2004convex}
Boyd, S., Vandenberghe, L. (2004). \emph{Convex Optimization}. Cambridge University Press, Cambridge.

\bibitem{colombo2004subgradient}
Colombo, G., Wolenski, P.R. (2004). The subgradient formula for the minimal time function in the case of constant dynamics in Hilbert space. J. Glob. Optim. \textbf{28}, 269--282.

\bibitem{Cuong2020}
Cuong, T.H., Yao, J.-C., Yen, N.D. (2020). Qualitative properties of the minimum sum-of-squares clustering problem. Optimization \textbf{69}(9), 2131--2154.

\bibitem{Cuong2023}
Cuong, T.H., Thien, N., Yao, J.-C., Yen, N.D. (2023). Global solutions of the multi-source Weber problems. J. Nonlinear Convex Anal. \textbf{24}(4), 669--680.

\bibitem{CTWYOptim}
Cuong, T.H., Wen, C.-F., Yao, J.-C., Yen, N.D. (2024). Local solutions of the multi-source Weber problem. Optimization, 1--15.

%\bibitem{HeNg2006}He, Y., Ng, K.F. (2006). Subdifferentials of a minimum time function in Banach spaces. J. Math. Anal. Appl. \textbf{321}, 896--910.
\bibitem{Gao}
Gao C, Zhu Y, Shen X et al (2016) Estimation of multiple networks in
Gaussian mixture models. Electron J Stat 10(1):1133--1154

\bibitem{Huang}
Huang T, Peng H, Zhang K (2017) Model selection for Gaussianmixture
models. Stat Sin 27:147--169

\bibitem{longoptimletter}
Long, V.S.T. (2021). A new notion of error bounds: necessary and sufficient conditions. Optim. Lett. \textbf{15}(1), 171--188.

\bibitem{LNSYkcenter}
Long, V.S.T., Nam, N.M., Sharkansky, J., Yen, N.D. (2025). Qualitative properties of $k$-center problems. J. Optim. Theory Appl. \textbf{207}(1).

\bibitem{LNTV}
Long, V.S.T., Nam, N.M., Tran, T., Van, N.T.T. (2024). Qualitative analysis and adaptive boosted DCA for generalized multi-source Weber problems. arXiv preprint arXiv:2409.13635.

\bibitem{Mclachlan}
Mclachlan G, Basford K (1988) Mixture models: inference and applications to clustering. Marcel Dekker, New York

\bibitem{Martini2002}
Martini, H., Swanepoel, K.J., Weiss, G. (2002). The Fermat--Torricelli problem in normed planes and spaces. J. Optim. Theory Appl. \textbf{115}(2), 283--314.



\bibitem{Mordukhovich2010}
Mordukhovich, B.S., Nam, N.M. (2011). Applications of variational analysis to a generalized Fermat--Torricelli problem. J. Optim. Theory Appl. \textbf{148}(3), 431--454.

\bibitem{mordukhovich2023easy}
Mordukhovich, B.S., Nam, N.M. (2023). \emph{An Easy Path to Convex Analysis and Applications}, 2nd edn. Springer, Cham.

\bibitem{Nam2018}
Nam, N.M., An, N.T., Reynolds, S., Tran, T. (2018). Clustering and multifacility location with constraints via distance function penalty methods and DC programming. Optimization \textbf{67}(11), 1869--1894.

\bibitem{Nam2017}
Nam, N.M., Rector, R.B., Giles, D. (2017). Minimizing differences of convex functions with applications to facility location and clustering. J. Optim. Theory Appl. \textbf{173}(1), 255--278.

\bibitem{nesterov2018lectures}
Nesterov, Y. (2004). \emph{Introductory Lectures on Convex Optimization: A Basic Course}. Kluwer Academic Publishers, Boston.

\bibitem{nesterov2005smooth}
Nesterov, Y. (2005). Smooth minimization of non-smooth functions. Math. Program. \textbf{103}(1), 127--152.

\bibitem{rockafellar1970convex}
Rockafellar, R.T. (1970). \emph{Convex Analysis}. Princeton University Press, Princeton.

\bibitem{rockafellar2009variational}
Rockafellar, R.T., Wets, R.J.-B. (2009). \emph{Variational Analysis}. Springer, Berlin.

\bibitem{TA1}
Tao, P.D., An, L.T.H. (1997). Convex analysis approach to DC programming: theory, algorithm and applications. Acta Math. Vietnam. \textbf{22}(1), 289--355.

\bibitem{TA2}
Tao, P.D., An, L.T.H. (1998). A DC optimization algorithm for solving the trust-region subproblem. SIAM J. Optim. \textbf{8}(2), 476--505.

\bibitem{Tao1986}
Tao, P.D., Souad, E.B. (1986). Algorithms for solving a class of nonconvex optimization problems. \emph{Methods of Operations Research} \textbf{57}, 249--271.

\end{thebibliography}
\end{document}